\newcommand{\COMMENT}[1]{}
\newcommand{\eps}{\varepsilon}
\renewcommand{\restriction}{\mathord{\upharpoonright}}
\newtheorem{theorem}{Theorem}
\newtheorem{lemma}[theorem]{Lemma}
\newtheorem{proposition}[theorem]{Proposition}
\newtheorem{claim}[theorem]{Claim}
\newtheorem*{definition*}{Definition}
\numberwithin{equation}{section}
\numberwithin{theorem}{section}
\title[Optimal bounds on the polynomial Schur's theorem]{Optimal bounds on the polynomial Schur's theorem}
\author{Jaehoon Kim}
\email{jaehoon.kim@kaist.ac.kr}
\address{Department of Mathematical Sciences, KAIST, Daejeon 34141, Republic of Korea}
\author{Hong Liu}
\email{hongliu@ibs.re.kr}
\address{Extremal Combinatorics and Probability Group (ECOPRO), Institute for Basic Science (IBS), Daejeon, South Korea.}
\author{P\'eter P\'al Pach}
\email{pach.peter@vik.bme.hu}
\address{Department of Computer Science and Information Theory, Budapest University of Technology and Economics, M\H{u}egyetem rkp. 3., H-1111 Budapest, Hungary; \newline \hspace*{4mm}
MTA-BME Lend\"ulet Arithmetic Combinatorics Research Group,
  ELKH, M\H{u}egyetem rkp. 3., H-1111 Budapest, Hungary.}
\thanks{J.K. was supported by the Fulbright Visiting Scholar Fellowship and by the National Research Foundation of Korea (NRF) grant funded by the Korean government(MSIT) No. RS-2023-00210430. H.L. was supported by IBS-R029-C4. P.P.P. was supported by the Lend\"ulet program of the Hungarian Academy of Sciences (MTA) and by the National Research, Development and Innovation Office NKFIH (Grant Nr. K146387).}
\begin{document}
\begin{abstract}
Liu, Pach and S\'andor recently characterized all polynomials $p(z)$ such that the equation $x+y=p(z)$ is $2$-Ramsey, that is, any $2$-coloring of $\mathbb{N}$ contains infinitely many monochromatic solutions for $x+y=p(z)$. 
In this paper, we find asymptotically tight bounds for the following two quantitative questions.
\begin{itemize}
    \item For $n\in \mathbb{N}$, what is the longest interval $[n,f(n)]$ of natural numbers which admits a $2$-coloring with no monochromatic solutions of $x+y=p(z)$?
    \item For $n\in \mathbb{N}$ and a $2$-coloring of the first $n$ integers $[n]$, 
    what is the smallest possible number $g(n)$ of monochromatic solutions of $x+y=p(z)$?
\end{itemize}
Our theorems determine $f(n)$ up to a multiplicative constant $2+o(1)$, and determine the asymptotics for $g(n)$.
\end{abstract}

\date{\today}
\maketitle
\section{Introduction}
\COMMENT{All `COMMENT's are intended to be deleted before submission, while `footnotes' are intended to be remained.}

Ramsey theory is a study about finding largest possible orders in chaotic systems.
It has a long history dating back to the following remarkable theorem by Schur~\cite{Schur1916} in 1916: the equation $x+y=z$ (or equivalently the tuple of the form $(x,y,x+y)$) is \emph{Ramsey}, which means that any finite coloring of $\mathbb{N}$ contains infinitely many tuples $(x,y,z)$ satisfying the equation $x+y=z$ (or equivalently $(x,y,z)=(x,y,x+y)$). These tuples satisfying the equation are called \emph{monochromatic solutions} of the equation. 
Another classical theorem regarding Ramsey equation is van der Waerden's theorem~\cite{van der Waerden} which states that the arithmetic progression $(x,x+d,\dots, x+(t-1)d)$ is Ramsey for any $d\in \mathbb{N}$.
Later, Rado~\cite{Rado} completely determined all Ramsey linear equations.
Starting with these results, many extensions have been considered.
Just to state a few, the Phytagorean equation $x^2+y^2=z^2$ was shown to be 2-Ramsey by Heule-Kullmann-Marek~\cite{HKM} using computer assistance, 
a polynomial extension of van der Waerden's theorem was shown by Bergelson-Leibman~\cite{BL},  Moreira~\cite{Moreira} proved that $(x,x+y,xy)$ is Ramsey, and Bowen-Sabok~\cite{BS} proved that $(x,y,x+y,xy)$ is Ramsey in the rationals.

Recently, there have been exciting developments on the polynomial extension of Schur's theorem.
Csikv\'ari-Gyarmati-S\'ark\"{o}zy \cite{CGS} proved that $x+y=z^2$ is not $16$-Ramsey, meaning that there exists a coloring of $\mathbb{N}$ with $16$ colors containing no monochromatic solutions for $x+y=z^2$ other than the trivial solution $(x,y,z)=(2,2,2)$. 
Green-Lindqvist \cite{GL} used a Fourier analytic argument to show that the equation $x+y=z^2$ is $2$-Ramsey but not $3$-Ramsey. 
The third author of this paper \cite{P} found a shorter combinatorial proof for the $2$-Ramseyness of the equation $x+y=z^2$.
The second and third authors of this paper together with S\'andor \cite{LPS} proved the polynomial Schur's theorem which completely determines the Ramseyness of the equation $x+y=p(z)$ for all polynomials $p$. The equations of the form $ax+by=p(z)$ for certain choices of $a,b,p(z)$ were further considered in  \cite{BDKPP}.

A modification of the $3$-coloring given by Green-Lindqvist \cite{GL} shows that $x+y=p(z)$ is not $3$-Ramsey for any polynomial $p(z)$ of degree at least two, and the following theorem determines all polynomials $p$ for which the equation $x+y=p(z)$ is $2$-Ramsey.
\begin{theorem}[Polynomial Schur's theorem \cite{LPS}]\label{thm: poly Schur}
    For a polynomial $p(z)$ of degree $d\geq 1$ with a positive leading coefficient, the equation $x+y=p(z)$ is $2$-Ramsey if and only if $p(z)$ is a non-odd polynomial. 
\end{theorem}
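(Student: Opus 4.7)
The theorem is an if-and-only-if, and I would prove the two directions with rather different methods. For the easy ``only if'' direction (odd $p$ is not $2$-Ramsey) the plan is constructive: exhibit a $2$-coloring of $\mathbb{N}$ with only finitely many monochromatic solutions. A natural choice is to color $n$ by the parity of $\lfloor \log_2 n \rfloor$, so the color classes are alternating dyadic intervals. Since $p$ is odd of degree $d \geq 3$ (the linear case being Schur's theorem itself), a solution $x+y = p(z)$ with $z$ in the dyadic block $I_k = [2^k, 2^{k+1})$ forces $p(z) \in I_{dk+O(1)}$, so at least one of $x, y$ must lie in $I_{dk+O(1)}$, a block whose color depends on the parity of $dk$ rather than of $k$. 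With $d$ odd and $d \geq 3$, the parities of $k$ and $dk$ disagree on a density-one set of $k$, producing a color mismatch outside of finitely many configurations; the odd-polynomial structure $p(z) = z \cdot r(z^2)$ is what lets me propagate this mismatch to all but finitely many $z$.

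For the hard ``if'' direction (non-odd $p$ is $2$-Ramsey), fix any $2$-coloring $\chi: \mathbb{N} \to \{R,B\}$ and seek infinitely many monochromatic solutions. The plan combines a density pigeonhole with a shift trick exploiting non-oddness. First, in arbitrarily large intervals $[N, CN]$ one color class, say $R$, has density at least $\tfrac{1}{2}$ by pigeonhole. Within such an interval, consider many $z \in R$ and the associated values $p(z)$; the goal is to show that a positive proportion of these $p(z)$ admit a representation $p(z) = x+y$ with $x, y \in R$ in a comparable range. The non-oddness of $p$, i.e.\ a nonzero coefficient at some even-degree term, is crucial here: it guarantees that the values $p(z)$ as $z$ varies in an arithmetic progression are distributed across all relevant residues, so that the additive representations $p(z) = x + y$ are not uniformly blocked by a parity- or valuation-based obstruction. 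A final pigeonhole over the colors of $x$ versus $y$ then yields a monochromatic solution, and iterating over larger $N$ produces infinitely many.

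The main obstacle I foresee is the case in which one color class is sparse but highly structured (for instance, a geometric progression or the set of numbers with a given $\nu_2$ parity). In that regime the naive density argument collapses and one must exploit the structure of the complement directly: the sparse class can be approximated by a union of arithmetic or geometric progressions, and the non-oddness of $p$ guarantees solutions inside the complement via a congruence argument modulo a prime or a power of $2$. Unifying the dense case and the structured-sparse case into a single clean argument, while also correctly identifying the precise obstruction that odd polynomials present (so that the two directions line up), is the delicate technical heart of the proof, and it is exactly this threshold where I would expect the quantitative questions addressed later in the paper to become sharp.
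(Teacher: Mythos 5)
The most fundamental problem is that you have misread the definition of \emph{odd polynomial}. In this paper (and in~\cite{LPS}), a polynomial $p(z)$ is called \emph{odd} if it attains only odd \emph{values} on $\mathbb{Z}$, not if it is an odd \emph{function} of the form $z\cdot r(z^2)$. For example, $p(z) = z^2 + z + 1$ is an odd polynomial in the paper's sense (and it is not an odd function), while $p(z) = z^3$ is an odd function but is non-odd in the paper's sense (it takes even values). Because of this, the entire ``only if'' direction of your proposal is aimed at the wrong target. The actual ``only if'' direction is trivial and is stated explicitly in the introduction: if $p(z)$ is always odd, color $\mathbb{N}$ by parity; then in a monochromatic triple $x,y,z$, the sum $x+y$ is even while $p(z)$ is odd, so no monochromatic solutions exist. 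Your dyadic coloring based on $\lfloor \log_2 n\rfloor$ and the factorization $p(z) = z\cdot r(z^2)$ is unnecessary and, moreover, does not apply to the relevant class of polynomials.

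Your ``if'' direction is also problematic on its own terms, independent of the definitional confusion. The observation that one color class has density at least $1/2$ on arbitrarily large intervals is true for \emph{every} $2$-coloring and carries no information about whether $x+y = p(z)$ has monochromatic solutions: the coloring $[1,M]\cup[M'+1,\ldots]$ positive, $[M+1,M']$ negative (the construction described just before Theorem~\ref{thm: existence}) has density $1/2$ of each color on large intervals yet avoids monochromatic solutions on a polynomially long range. A bare density-plus-pigeonhole argument would have to be supplemented with exactly the kind of structural analysis you relegate to a vague footnote about ``structured sparse classes.'' The actual proof in~\cite{LPS}, whose machinery is partly reproduced in the Preliminaries here, proceeds via a quite different combinatorial route: it locates a positive switch $k$ (a place where the color changes from $+1$ to $-1$), distinguishes isolated from non-isolated switches, and exploits the numbers $m_s(k) = p(k+s)-p(k)$ to manufacture \emph{bad pairs} as in Lemma~\ref{lem: bad}, each of which forces a monochromatic solution. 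Non-oddness enters because $m_1(k)$ is odd (used e.g.\ in Proposition~\ref{prop: sum} and Lemma~\ref{lem: initial}), which is what makes the modular bookkeeping go through; it is a parity fact about a \emph{difference} of values, not a statement about residues of $p(z)$ along progressions as you suggest. In short, the proposal is built on an incorrect reading of the hypothesis, and the positive direction as sketched would not close.
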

Here, a polynomial is \emph{odd} if it only attains odd values on $\mathbb{N}$ and \emph{non-odd} otherwise. It is easy to see that if the polynomial $p(z)$ is odd, then coloring all integers according to their parity will avoid any monochromatic solutions. Theorem~\ref{thm: poly Schur} shows that this `parity constraint' is the only obstruction for the $2$-Ramseyness of the equation $x+y=p(z)$. Note that it is sufficient to consider the polynomials with a positive leading coefficient as we only consider colorings of the positive integers.

As this result completely settles the Ramsey problem for the equation $x+y=p(z)$, it is natural to ask its quantitative aspect. 
In Ramsey theory, finding the correct quantitative bounds
tends to be much more challenging than just proving the existence. For example, in the (hyper)graph Ramsey theory, the existence of the Ramsey number was completely resolved \cite{Ramsey} already in 1930. However, the correct quantitative bounds still remain elusive.
Even for the simplest $2$-uniform (graph) Ramsey numbers, there is an exponential gap between the best known lower bound \cite{Spencer} and the best known upper bound and improving this exponential gap to a smaller one in the recent breakthrough by Campos-Griffiths-Morris-Sahasrabudhe \cite{CGMS} took almost 90 years.

Regarding the polynomial extension of Schur's theorem, one can ask two natural quantitative questions.
How long can we avoid the monochromatic solutions and how many monochromatic solutions we must encounter in a $2$-coloring of $[n]:=\{1,2,\ldots,n\}$?
For the first question, considering colorings from $1$ to a number $m$ avoiding monochromatic solutions makes little sense as such a number $m$ will always be a finite number. Hence, we should rather look at colorings of numbers that are at least as large as some number $n$. Formally, we study the following two quantitative questions regarding the $2$-Ramseyness of the equation $x+y=p(z)$ for non-odd polynomials $p(z)$.
\begin{itemize}
    \item[(1)] For $n\in \mathbb{N}$, what is the largest $f(n)$ such that one can 2-color the interval $[n,f(n)]$ so that it does not contain any monochromatic solutions of $x+y=p(z)$?
\item[(2)] What is the smallest possible number $g(n)$ of monochromatic solutions  of $x+y=p(z)$ inside the set $[n]$ over all $2$-colorings of $[n]$?
\end{itemize}

The second question of counting monochromatic solutions have been considered for linear equations.
Frankl, Graham and R\"odl~\cite{FGR} in the 80s proved an optimal quantitative Rado's theorem. More precise counting result for the specific equation $x+y=z$ was later obtained independently by Schoen~\cite{Schoen} and Robertson and Zeilberger~\cite{RZ}, another proof was given by Datskovsky~\cite{Dat}. 
Results on counting monochromatic solutions of systems of linear equations on more general groups than $\mathbb{Z}$ were also obtained, see \cite{CCS,SV}. However, for non-linear equations, only a few counting results are known for very specific types of equations. For example, diagonal Diophantine equations were considered using the Hardy–Littlewood circle method, see e.g. \cite{Prendiville}.
In general, determining the answers of these quantitative questions for a fairly substantial class of non-linear equations is very difficult.

For equation $x+y=p(z)$ of our interest, there are constructions yielding natural bounds. 
For the first question, this natural bound is $f(n)= \Omega(n^{d^2})$. 
Assuming that both numbers $p(n)$ and $p(\frac{1}{2}p(n))$ are even numbers, consider the coloring of the interval $[n, \frac{1}{2} p( \frac{1}{2}p(n) ) -1]$ as follows.
We color the numbers in the first interval $[n,   \frac{p(n)}{2}-1]$ by the color $+1$ and the rest of the numbers in $[ \frac{p(n)}{2} , \frac{1}{2} p(\frac{1}{2}p(n) ) -1]$ by $-1$. Then any $x,y, z$ with color $+1$ satisfies $p(z)\geq p(n)> x+y$ while any $x,y,z$ with color $-1$ satisfies $p(z)\geq p( \frac{1}{2}p(n))>x+y$, thus no monochromatic solutions exist. This shows that $f(n) \geq \frac{1}{2} p(  \frac{1}{2}p(n) )-1 = \Omega(n^{d^2})$.

Regarding the second question for the equation $x+y=p(z)$, the following construction shows that $g(n)= O(n^{2/d^2})$. Given $n$, consider two numbers $a = \min\{ x: p(x)> 2n\}$ and $b=\min\{ x: p(x)> 2a\}$ satisfying $a = \Theta(n^{1/d})$ and $b= \Theta(n^{1/d^2})$.
By coloring the numbers $[b-1]\cup [a,n]$ with color $+1$ and $[b,a-1]$ with color $-1$, we can avoid all monochromatic solutions $(x,y,z)$ with $x,y \geq b$. As all monochromatic solution $(x,y,z)$ lies in $[b-1]$, the number of monochromatic solutions is at most $b^2 = O(n^{2/d^2})$, showing that $g(n)= O(n^{2/d^2})$. 

In this paper, we almost completely resolve both questions. The first theorem below answers the first question by determining the value of $f(n)$ up to a multiplicative constant $2+o(1)$.\COMMENT{In fact, by improving some easy calculations one can show $f(n) \leq  p( \frac{p(n)}{2} ) - \frac{p(n)}{2}$.}

\begin{theorem}\label{thm: existence}
Let $p(z)\in \mathbb{Z}[z]$ be a non-odd polynomial of degree $d\geq 2$ with a positive leading coefficient, then there exists an integer $n_0$ such that the following holds for all $n\geq n_0$.
For any $2$-coloring $\phi$ of $[n, p(\lceil \frac{1}{2}p(n) \rceil)]$, there exists monochromatic numbers $x,y,z\in [n, p(\lceil \frac{1}{2}p(n)\rceil)]$ satisfying $x+y=p(z)$.
\end{theorem}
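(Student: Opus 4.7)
The plan is to prove the theorem by contradiction. Suppose $\chi \colon [n, N] \to \{+, -\}$, where $N = p(m)$ and $m = \lceil p(n)/2 \rceil$, is a $2$-coloring avoiding all monochromatic solutions to $x + y = p(z)$, and write $A_+ = \chi^{-1}(+)$ and $A_- = \chi^{-1}(-)$. Without loss of generality $\chi(n) = +$. The first step is to pin down the color of the central element $m$: consider the solution $(m, p(n) - m, n)$ with $z = n$. Since $p(n) - m \in \{m, m-1\}$ by the parity of $p(n)$, this triple is $(m, m, n)$ (even case, forcing $\chi(m) = -$) or $(m, m-1, n)$ (odd case, forcing at least one of $\chi(m), \chi(m-1)$ to be $-$). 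The odd sub-case where $\chi(m) = +$ and $\chi(m-1) = -$ is handled by treating $m-1$ symmetrically as a ``top anchor''; for clarity I describe the rest of the argument assuming $\chi(m) = -$.

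Next I would exploit the two extreme pivots $z = n$ (color $+$) and $z = m$ (color $-$). The solutions $(x, p(n) - x, n)$ force $\chi(x) = + \Rightarrow \chi(p(n) - x) = -$ for $x \in [n, p(n) - n]$, while the solutions $(x, N - x, m)$ force $\chi(x) = - \Rightarrow \chi(N - x) = +$ for $x \in [n, N - n]$. Composing the two color-flipping involutions $x \mapsto p(n) - x$ and $y \mapsto N - y$ yields a shift invariance: if $\chi(x) = +$ and $x \in [n, p(n) - n]$, then $\chi(x + \Delta) = +$ where $\Delta = N - p(n)$. Replacing $n$ by any other $+$-pivot $z_1 \in A_+ \cap [n, m]$ and $m$ by any other $-$-pivot $z_2 \in A_- \cap [n, m]$, the analogous composition gives the $+$-invariance $\chi(x) = + \Rightarrow \chi(x + (p(z_2) - p(z_1))) = +$ on a suitable window, and correspondingly $\chi(y) = - \Rightarrow \chi(y - (p(z_2) - p(z_1))) = -$ in a parallel range.

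The crux is to leverage the full family of shifts $\{p(z_2) - p(z_1) : z_1 \in A_+ \cap [n, m],\ z_2 \in A_- \cap [n, m]\}$. A key arithmetic observation is that the non-oddness of $p$ guarantees that $p$ takes both parities on $[n, m]$, so the shift family contains values of different parities and hence has small $\gcd$. By iterating the $+$-invariance starting from $\chi(n) = +$ (combining several shifts when needed), one aims to force $\chi(m) = +$, contradicting $\chi(m) = -$. The principal obstacle is quantitative: each individual shift $p(z_2) - p(z_1)$ has size at least $\Omega(n^{d-1})$, while the distance $m - n$ to be bridged is of order $n^d$, and each shift has only limited range of validity (roughly $[n, p(z_1) - n]$) within $[n, N]$. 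Combining the shifts carefully — in both directions when the coloring interleaves $+$ and $-$ on $[n, m]$, and by iterated one-sided chaining in the monotone case — while keeping all intermediate positions within $[n, N]$ is the main technical step of the proof.
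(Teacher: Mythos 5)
Your proposal captures the right core mechanism: composing the reflections $x\mapsto p(z_1)-x$ (with $z_1\in A_+$) and $y\mapsto p(z_2)-y$ (with $z_2\in A_-$) to produce the $+$-propagating shift $x\mapsto x+(p(z_2)-p(z_1))$. This is essentially the paper's notion of a \emph{bad pair} rephrased as shift invariance: taking $z_1=k$ (a positive switch) and $z_2=k+s$, your shift $p(z_2)-p(z_1)$ is exactly $m_s(k)$, and the contrapositive of Lemma~\ref{lem: bad} gives your implication $\phi(a)=+\Rightarrow\phi(a+m_s(k))=+$. So the germ of the argument is sound. However, there is a genuine gap exactly where you flag it (``the main technical step of the proof''), and it is not a small one.

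First, targeting $\chi(m)=+$ as the contradiction is too rigid. The set of reachable positions $n+\sum_j (p(z_2^{(j)})-p(z_1^{(j)}))$ is an additive semigroup whose smallest positive generator is $\Theta(n^{d-1})$; whether the specific integer $m-n$ lies in that semigroup is a delicate numerical-semigroup question, and the parity observation you make is necessary but far from sufficient. The paper avoids this entirely by not aiming at a single point: it propagates a short positive interval $[k-4\ell,k]$ forward by combinations of the $m_s(k)$'s to land near $p(k)/2$ within an additive error $O(\ell)$ (via Lemma~\ref{lem: estimation}), and then the monochromatic solution comes from the robust family $(\tfrac{p(k)}{2}-y,\tfrac{p(k)}{2}+y,k)$ — you only need the interval around $p(k)/2$ to be mostly positive, not to hit any particular point. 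Lemma~\ref{lem: estimation} is precisely the machinery for ``combining several shifts when needed'' and its proof is nontrivial (it uses the vectors $\mathbf{b}^{t,s}$ and a geometric-series-style greedy approximation).

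Second, your argument depends on which shifts $p(z_2)-p(z_1)$ are available, which in turn depends on the unknown pattern of colors in $[n,m]$. The paper resolves this before the shift-chaining even begins: it picks the largest switch $k\in[n,\lceil p(n)/2\rceil]$ and proves via Proposition~\ref{prop: sum} and Lemma~\ref{lem: initial} that $[k+1,k_0]$ is entirely $-1$ and $[\max\{n,m'(k)\},k]$ is entirely $+1$. This structural step is what makes the full family of shifts $\{m_s(k):s\in[k_0-k]\}$ usable with a single fixed pivot $k$; without it, you cannot know that $k+s\in\Phi^-$ for all $s$ in the relevant range, and the shift invariance could fail. Your proposal instead imagines an ad hoc ``monotone case'' versus ``interleaved case'' dichotomy, but does not produce the structural control that the argument actually requires. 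In short: the two missing pieces are (i) the additive-combination machinery (the paper's Lemma~\ref{lem: estimation}) and (ii) the structural classification of the coloring near the switch (the paper's Lemma~\ref{lem: initial}/Proposition~\ref{prop: sum}); both are substantial, and the choice of target should be an interval around $p(k)/2$, not the single point $m$.
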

Our second theorem determines the asymptotics for the answer of the second question, showing that the number $2/d^2$ in the exponent is tight. 

\begin{theorem}\label{thm: counting}
Let $p(z)\in \mathbb{Z}[z]$ be a non-odd polynomial of degree $d\geq 2$ with a positive leading coefficient. Let $\phi$ be a $2$-coloring of $[n]$. 
Then there are at least $n^{2/d^2-o(1)}$ monochromatic solutions $(x,y,z)\in [n]^{(3)}$ of $x+y=p(z)$. 
\end{theorem}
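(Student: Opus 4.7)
My plan is to prove Theorem~\ref{thm: counting} by establishing the following supersaturated form of Theorem~\ref{thm: existence}: for every sufficiently large $m$, every $2$-coloring of the interval $[m, p(\lceil p(m)/2 \rceil)]$ admits at least $m^{2-o(1)}$ monochromatic solutions of $x+y=p(z)$. Granting this, Theorem~\ref{thm: counting} follows easily: for any $\epsilon>0$, set $m = \lfloor n^{1/d^2 - \epsilon}\rfloor$, so that $p(\lceil p(m)/2 \rceil) \leq n$ for $n$ large enough; then $[m, p(\lceil p(m)/2\rceil)] \subseteq [n]$ contains at least $m^{2-o(1)} = n^{2/d^2 - 2\epsilon - o(1)}$ monochromatic solutions, and sending $\epsilon \to 0$ yields $g(n) \geq n^{2/d^2-o(1)}$. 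The exponent $2$ is optimal in view of the extremal construction in the introduction.

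To prove the supersaturation, I would argue by contradiction and deletion. Fix $\alpha>0$ and suppose the $2$-coloring $\phi$ of $I := [m, p(\lceil p(m)/2 \rceil)]$ has at most $K := m^{2-2\alpha}$ monochromatic solutions. Let $F \subseteq I$ be the set of integers appearing as a coordinate of some monochromatic solution, so $|F| \leq 3K$; the coloring on $I \setminus F$ then has no monochromatic solution with all three coordinates in $I \setminus F$. The aim is to exhibit a sub-interval $J = [m', p(\lceil p(m')/2 \rceil)] \subseteq I$ disjoint from $F$, because Theorem~\ref{thm: existence} applied inside $J$ would then produce a monochromatic solution all of whose coordinates lie in $J \subseteq I \setminus F$, contradicting the definition of $F$.

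Locating such a sub-interval requires quantitatively strengthening the naive covering argument. Demanding only that each sub-interval intersect $F$ yields, by pigeonhole over scales $m' \in [n_0, m]$, merely $|F| \geq \Omega(\log \log m)$, which is far too weak. My plan is to iterate the argument recursively: inside each candidate sub-interval $[m', p(\lceil p(m')/2 \rceil)]$ one applies the same reasoning to force $F$ to contain $\Omega((m')^{2-o(1)})$ elements there, and a weighted accounting across scales $m' \in [n_0, m]$ should eventually give $|F| \geq \Omega(m^{2-o(1)})$, contradicting $|F|\leq 3K$.

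The chief obstacle is precisely this multi-scale recursion. A self-referential recurrence could produce only exponentially weak bounds, so the actual argument must carefully exploit how the nested sub-intervals $[m', p(\lceil p(m')/2 \rceil)]$ and their monochromatic solutions interact — likely using that, in the extremal coloring of the introduction, the $\Theta(m^2)$ monochromatic solutions are concentrated on only $\Theta(m)$ coordinates at a single critical scale, which is exactly the combinatorial obstruction one must rule out in all colorings. An alternative, possibly cleaner route is to open up the proof of Theorem~\ref{thm: existence} itself and extract $m^{2-o(1)}$ monochromatic solutions from it directly rather than just one.
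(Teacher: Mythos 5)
Your outer reduction (take $m=\lfloor n^{1/d^2-\eps}\rfloor$ so that $[m,p(\lceil p(m)/2\rceil)]\subseteq [n]$, then count solutions inside) is in the same spirit as the paper's opening move, which works inside $I=[c_1 n^{1/d^2},c_2 n^{1/d}]$ for suitable constants $c_1,c_2$. But the heart of your proposal --- deriving a supersaturated version of Theorem~\ref{thm: existence} by deletion and finding a clean sub-interval --- has a fatal structural flaw, and the recursion you gesture at cannot rescue it.

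The deletion step asks for a nontrivial sub-interval $J=[m',p(\lceil p(m')/2\rceil)]\subseteq I=[m,p(\lceil p(m)/2\rceil)]$ disjoint from $F$. No such $J$ exists with $m'\ne m$: the left endpoint forces $m'\geq m$ and the right endpoint forces $p(\lceil p(m')/2\rceil)\leq p(\lceil p(m)/2\rceil)$, i.e.\ $m'\leq m$, so $J=I$, which intersects any nonempty $F$. For $m'<m$ the interval $J$ protrudes to the left of $I$, where $\phi$ is simply not defined, so Theorem~\ref{thm: existence} cannot be invoked there either. In other words, the ``scales'' $m'\in[n_0,m]$ you want to pigeonhole over are not nested sub-intervals of $I$; they are heavily overlapping intervals all containing the same large numbers near $p(\lceil p(m)/2\rceil)$, and none of them sits inside $I$ except $I$ itself. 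This is not a quantitative weakness to be fixed by a cleverer weighting --- the decomposition does not exist.

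There is also a conceptual mismatch: deletion-plus-existence gives supersaturation only when the problem is ``self-similar under restriction to sub-structures'' (as in Tur\'an/removal settings). The equation $x+y=p(z)$ has a fixed arithmetic shape, and the extremal two-block coloring already shows that a $2$-coloring of $[m,\tfrac12 p(\tfrac12 p(m))-1]$ can have \emph{zero} monochromatic solutions; the window you need to cover to guarantee even one solution is as wide as the whole of $I$, so there is no room to iterate. The paper instead builds the counting result from scratch: it locates the largest switch $k$ in $I$, handles the non-isolated case via Lemma~\ref{lem: non isolated}, and in the isolated case produces $k^{2-o(1)}$ pairwise-disjoint bad pairs via the grid-digraph path argument of Lemma~\ref{lem: many intervals} (fed into Lemma~\ref{lem: isolated}, with a separate easier argument, Lemma~\ref{lem: d=2}, for $d=2$). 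That machinery --- expressing target differences as bounded integral combinations of the $m_s(k)$ (Lemma~\ref{lem: estimation}), the independence in Proposition~\ref{prop: t terms independent}, and the expansion Lemma~\ref{lem: grid} --- is considerably heavier than what is needed for Theorem~\ref{thm: existence}, which is why your alternative suggestion of ``opening up the proof of Theorem~\ref{thm: existence} and extracting $m^{2-o(1)}$ solutions'' is closer in spirit but still underestimates how much new work the counting genuinely requires.
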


By closely examining our proof, one can measure the above $o(1)$-term more precisely. Theorem~\ref{thm: counting} yields $\Omega(\frac{n^{2/d^2}}{ \log (n)})$ monochromatic solutions if $d\geq 3$, and $n^{\frac{2}{d^2}- O(\frac{1}{\log\log n})}$ monochromatic solutions if $d=2$. We note that the multiplicative $\Omega(\frac{1}{\log n})$-term for $d\geq 3$ comes from Lemma~\ref{lem: many intervals} and the multiplicative $n^{-O(\frac{1}{\log\log n})}$-term for $d=2$ comes from Lemma~\ref{lem: non isolated}.

\section{Preliminaries}

For given natural numbers $n,a,b\in \mathbb{N}$, we define $[n]:=\{1,2,\dots, n\}$ and $[a,b] := \{a, a+1,\dots, b\}$. 
  We write $a=b\pm c$ if $b-c \leq a \leq b+c$. All logarithms in this paper are the logarithms with the base two.
If we claim that a result holds whenever we have $0<a\ll b\ll c,d<1$, it means that there exists functions $f,g$ such that the result holds as long as $a<f(b)$ and $b<g(c,d)$. We will not compute these functions explicitly.
In many cases, we treat large numbers as if they are integers by omitting floor and ceiling if it does not affect the argument.
For a number $c$ and sets $A, B\subseteq \mathbb{N}$ of numbers, we write $A+c = \{a+c : a\in A\}$, $c-A=\{ c-a: a\in A\}$, $A-c=\{a-c:a\in A\}$,  $cA = \{ca : a\in A\}$, $A+B= \{a+b: a\in A, b\in B\}$ and $A-B= \{a-b: a\in A, b\in B\}$.
Furthermore, if $A$ is a finite set of numbers, we write $\min(A)$ and $\max(A)$ to denote the minimum and the maximum number in the set $A$.
For a given sequence $a_1,\dots, a_t$, we say that a number $Q$ is an \emph{integral combination} of the sequence $a_i$'s if  $Q=\sum_{i\in [t]} b_i a_i$ for some integers $b_1,\dots, b_t\in \mathbb{Z}$. If all $b_i$'s are non-negative, then $Q$ is a \emph{non-negative integral combination} of the sequence of $a_i$'s.

We say that a polynomial $p(z)\in \mathbb{Z}[z]$ is \emph{odd} if $p(z)$ is odd for all $z\in \mathbb{Z}$ and \emph{non-odd} otherwise. Throughout the paper, we fix a non-odd polynomial 
$$p(z) = a_d z^d + \dots +a_1 z+ a_0 \in \mathbb{Z}[z]$$ 
of degree $d\geq 2$ with positive leading coefficient $a_d$. We write $\|p\|$ to denote $d+\sum_{i=0}^{d} |a_i|$. 
We also assume that a $2$-coloring $\phi: \mathbb{N}\rightarrow \{-1,1\}$ is given. We say that a triple $(x,y,z)$ is a \emph{monochromatic solution} if it satisfies $x+y=p(z)$ and $x,y,z$ all have the same color. We consider $(x,y,z)$ and $(y,x,z)$ as two distinct solutions.
For two functions  $f,g$, we write $f(x)=O(g(x))$ if $|f(x)|\leq C|g(x)|$ for some $C$ which depends only on $\|p\|$. We also define the expression $f(x)=\Theta(g(x))$ and $f(x)=\Omega(g(x))$ in a similar way.

For a given $2$-coloring $\phi$, Let $\Phi^+, \Phi^-$ denote the set of natural numbers with colors $+1$ and $-1$, respectively. 
We say that a number $k$ is a \emph{switch} if $\phi(k)\neq \phi(k+1)$, and it is a \emph{positive switch} if $\phi(k)=+1$ and $\phi(k+1)=-1$. Suppose that we have fixed a large number $k$ with $1/k\ll 1/\|p\|$ which is a positive switch. We may assume this as otherwise either we obtain many monochromatic solutions or we can swap the colors $+1$ and $-1$ to obtain such a $k$. 

For each $s\in \mathbb{N}$, we write 
$$m_s(k):= p(k+s)- p(k).$$
As $1/k\ll 1/\|p\|$ holds, we have $m_{s}(k) >0$.
We consider the following two important numbers:
$$ k_1 := \max\Big\{ t: p(t)< \frac{1}{2}p(k)\Big\} \enspace \text{and} \enspace k_{0} := \max\{ t: p(t) < 2p(k) - 4m_1(k)\}.$$ 
With this definition, we can estimate the value of $k_1,k_0$ as $
k_1 = (2^{-1/d} \pm o(1) )k \text{ and } k_{0} = (2^{1/d} \pm o(1) ) k$, where the $o(1)$ term tends to zero as $k$ grows. 
We say that the switch $k$ is \emph{isolated} if all numbers in $[k+1,k_{0}]$ has color $-1$, otherwise it is \emph{non-isolated}.\footnote{
Note that this definition is slightly different from \cite{LPS} as we do not care how $[k/2,k]$ looks like.
} 

 We say that a pair $\{a,b\}$ of two numbers is \emph{$(k,s)$-bad} (or simply  
\emph{bad}) for $s\in [k]$ if 
$$b=a+m_s(k), \enspace \phi(k+s)=-1, \enspace \text{and} \enspace \phi(a) > \phi(b).$$
Since $\phi$ only takes values in $\{-1,1\}$, the above inequality $\phi(a)>\phi(b)$ fixes the colors of $a$ and $b$ as $\phi(a)=+1$ and $\phi(b)=-1$.
This concept of bad pair plays an important role in this paper because it provides many monochromatic solutions as follows.

\begin{lemma}\label{lem: bad}
Suppose that $p(z)\in \mathbb{Z}[z]$ is a polynomial with positive leading coefficient of degree $d\geq 2$, and $\phi:\mathbb{N}\rightarrow \{-1,1\}$. Suppose that $k$ is a positive switch with $0<1/k\ll 1/\|p\|< 1$.
If we have $t$ pairwise disjoint bad sets, then we obtain at least $t$ distinct monochromatic solutions.
\end{lemma}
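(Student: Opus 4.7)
The plan is to show that each bad pair produces a single monochromatic solution, and then argue by a short case analysis that disjoint bad pairs yield distinct solutions.

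First I would set up the key algebraic identity. Given a $(k,s)$-bad pair $\{a,b\}$ with $b = a + m_s(k)$, define $c := p(k) - a$. Since $m_s(k) = p(k+s) - p(k)$, rearranging gives $c = p(k+s) - b$ as well. Hence
\[
a + c = p(k) \qquad \text{and} \qquad b + c = p(k+s).
\]
So the pair $\{a,b\}$ simultaneously produces a candidate solution with $z = k$ and one with $z = k+s$; which one is monochromatic depends only on $\phi(c)$.

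Next, I would split on $\phi(c)$. Recall that $\phi(a) = +1$, $\phi(b) = -1$, and $\phi(k+s) = -1$ by definition of a bad pair; also $\phi(k) = +1$ since $k$ is a positive switch. If $\phi(c) = +1$, then $(a,c,k)$ is a monochromatic solution of $x+y=p(z)$ in color $+1$. If $\phi(c) = -1$, then $(b,c,k+s)$ is a monochromatic solution in color $-1$. (Here one should note in passing that $c \geq 1$, which holds in the relevant range since bad pairs are considered on numbers small enough that $a < p(k)$.) Thus every bad pair supplies at least one monochromatic solution.

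Finally I would check distinctness for two pairwise disjoint bad pairs $\{a_1,b_1\}$ and $\{a_2,b_2\}$, which are $(k,s_1)$- and $(k,s_2)$-bad respectively. The $z$-coordinate of the solution coming from pair $i$ is either $k$ or $k+s_i$. If both solutions have $z=k$, then their $x$-coordinates are $a_1$ and $a_2$, and equality would force $a_1 = a_2$, contradicting disjointness. If both have $z = k+s_i$, then $s_1 = s_2$ and their $x$-coordinates are $b_1 = b_2$, and then $a_i = b_i - m_{s_i}(k)$ forces $a_1 = a_2$ too, contradicting disjointness. If one has $z = k$ and the other $z = k + s_j$ with $s_j \geq 1$, the triples differ in the $z$-coordinate. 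Hence the $t$ resulting solutions are pairwise distinct.

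The lemma is essentially a structural observation rather than a technical result, so there is no real obstacle; the only minor bookkeeping point is the distinctness analysis above. The value of the lemma lies not in its proof but in that it reduces the task of counting monochromatic solutions (as in Theorem~\ref{thm: counting}) to the combinatorial task of exhibiting many pairwise disjoint bad pairs.
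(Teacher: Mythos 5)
Your argument is correct and essentially identical to the paper's: you define $c = p(k)-a$, observe that $a+c=p(k)$ and $b+c=p(k+s)$, and split on $\phi(c)$ to get either a $+1$-colored solution $(a,c,k)$ or a $-1$-colored solution $(b,c,k+s)$, then check distinctness by comparing $z$-coordinates and invoking disjointness of the bad pairs. Your distinctness case analysis is a bit more spelled out than the paper's (the paper compresses it to one sentence, pointing to the convention that $(x,y,z)$ and $(y,x,z)$ count as distinct solutions together with pairwise disjointness of the bad sets), but the content is the same.
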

\begin{proof}
Assume that $\{a,b\}$ is a $(k,s)$-bad set with $b=a+m_s(k)$, then $\phi(a)=+1$ and $\phi(b)=-1$.
As $1/k\ll 1/\|p\|$, we know that $m_s(k)>0$.

If $p(k)-a$ is colored $+1$, then $(a,p(k)-a,k)$ forms a monochromatic solution with color $+1$.
If $p(k)-a$ is colored $-1$, then $(b,p(k)-a,k+s) = (b, p(k+s)-b,k+s)$ forms a monochromatic solution with color $-1$. Hence, we obtain a monochromatic solution of the form $(a,p(k)-a,k)$ or $(b,p(k)-a,k+s)$.

Assume that we have pairwise disjoint sets $\{a_1,b_1\},\dots, \{a_t, b_t\}$ such that each $\{a_i,b_i\}$ is a $(k,s_i)$-bad set.
For each $i\in [t]$, the above argument shows that either $(a_i,p(k)-a_i,k)$ or $(b_i, p(k)-a_i,k+s_i)$ is a monochromatic solution. 
As two solutions $(x,y,z)$ and $(y,x,z)$ are different and $\{a_1,b_1\},\dots, \{a_t,b_t\}$ are all pairwise distinct, those solutions are all distinct and provide $t$ distinct monochromatic solutions.
\end{proof}

\subsection{Proof ideas}\label{sec: proof idea}
In this section, we describe the ideas for the proofs of our theorems. 
Assume that our task is to find many monochromatic solutions. 
As Lemma~\ref{lem: bad} demonstrates, this task boils down to finding many pairwise disjoint bad pairs. 
Using a lemma in \cite{LPS} (see Lemma~\ref{lem: non isolated}), we can assume that there is a number $k$ which is an isolated switch, i.e. $[k+1,k_0]\subseteq \Phi^{-}$.
Exploiting this assumption, we will pursue the following strategy.

As $[k+1,k_0]\subseteq \Phi^{-}$, for some number $z\in \Phi^{-}$, the interval $p(z) - [k+1,k_0]$ mostly consists of numbers with color $+1$, as otherwise we obtain many monochromatic solutions $(x,p(z)-x,z)$ with $x\in [k+1,k_0]$. Similar argument yields that for $z'\in \Phi^{+}$, $p(z') - (p(z)-[k+1,k_0])$ mostly consists of the numbers with the color $-1$.
Exploiting these arguments, we will be able to find intervals $P$ of mostly positive colored numbers and intervals $N$ of mostly negative colored numbers satisfying $\max(P)< \min(N)$. This provides many pairs $(a,b)\in P\times N$ satisfying $a< b$, $\phi(a)=+1$ and $\phi(b)=-1$. 
Based on this choice, we will write the number $b-a$ as a non-negative integral combinations of the numbers $m_1(k),\dots, m_{k_0-k}(k)$.
For example, if $b-a = 2m_1(k) + 2 m_2(k) $, then we may consider a sequence 
$$a,~ a+m_1(k),~ a+m_1(k)+m_2(k),~ a+m_1(k)+2m_2(k),~ a+ 2m_1(k)+ 2m_2(k) = b.$$
As the $\phi$-value on this sequence starts with $+1$ and ends with $-1$ and $k+s \in \Phi^-$ for $s\in [2]$, there must be two consecutive numbers $x,y$ in the sequence with $\phi(x)>\phi(y)$ and $y= x+ m_{s}(k)$ for some $s\in [2]$. This provides a bad pair $\{x,y\}$.
This strategy suggests that it is important to determine the numbers $Q=b-a$ that can be written as a non-negative integral linear combination of $m_{s}(k)$.

Suppose that the above procedure yields a bad pair $\{x,y\}= \{a+m_1(k), a+m_1(k)+m_2(k)\}$, and we wish to find more bad pairs disjoint from $\{x,y\}$. In order to obtain such bad pairs, we consider another sequence 
$$a,~ a+m_2(k),~ a+2m_2(k),~ a+ m_1(k)+2 m_2(k),~ a+2m_1(k)+2m_2(k) = b.$$
Note that this sequence lacks of consecutive numbers of the form $a+m_1(k)$ and $a+m_1(k)+m_2(k)$. As before, this sequence yields a new bad pair. However, whichever new bad pair we obtain, it has a different form than $\{a+m_1(k), a+m_1(k)+m_2(k)\}$ due to the choice of this new sequence. 
In general, having different forms does not necessarily imply disjointness as one number can be expressed as two different non-negative integral combinations of $m_s(k)$.
Nonetheless, we can show that the numbers $m_s(k)$ admit some `independence' (see Proposition~\ref{prop: t terms independent}). Using this independence and expressing the number $b-a$ as integral combinations of only a restricted number of terms $m_{s}(k)$, we can prove that the new bad pair is indeed disjoint from the previous bad pairs. 

Finding a sequence avoiding one such a bad pair is easy. However, if there are many bad pairs given, finding one new sequence avoiding all those is non-trivial. 
However, such a problem can be reduced to a graph theory problem regarding graph expansion. Indeed, there is a natural correspondence between such a sequence and a path in a certain `grid-like' directed graph avoiding given vertices.
Exploiting this correspondence and solving this graph problem, we will prove Lemma~\ref{lem: many intervals} which is a key technical lemma for the proof of Theorem~\ref{thm: counting}. We give a slightly more detailed sketch on these ideas right before the statement of Lemma~\ref{lem: many intervals}.

\subsection{Approximating a number with integral combinations of $m_{s}(k)$}

In order to find a desired integral combination for a given number $Q$, it is convenient to define the following infinite vectors and their inner product:
\begin{align*}
\mathbf{m}^k&:= ( m_1(k), m_2(k), \dots), \\
\mathbf{id}^s&:= (1^s, 2^s, \dots )
\enspace \text{and}\\  \mathbf{b}^{t,s} &:= ( \underbrace{0,\dots,0}_{s-1 \text{ zeros}} , -\binom{t}{1}, 
\underbrace{0,\dots,0}_{s-1 \text{ zeros}}, (-1)^2 \binom{t}{2},
\dots \dots , (-1)^t\binom{t}{t},0,0,\dots).
\end{align*}

If we consider a finite vector $(x_i:i\in S)$ for $S\subseteq \mathbb{N}$, we naturally identify it with an  infinite vector $(x_i: i\in \mathbb{N})$ by letting $x_i=0$ if $i\notin S$. 
This allows us to define inner product for two vectors with different lengths. In other words, for two vectors $\mathbf{x}$ and $\mathbf{y}$, we define 
$$\mathbf{x}\cdot \mathbf{y} = \sum_{i\in \mathbb{N}} x_iy_i.$$
For a given vector $\mathbf{x}$ and a set $S\subseteq \mathbb{N}$, we let $\mathbf{x}\restriction_{S} := (x_i \mathbf{1}_S(i) )_{i\in \mathbb{N}}$ be the vector we obtain where $\mathbf{1}_S(i)=1$ if $i\in S$ and $0$ if $i\notin S$.

In fact, later we will choose a number $a$ from an interval $P=[x,x+c]$ and a number $b$ from an interval $N=[x',x'+c]$ and express $b-a$ as an integral combination of the numbers $m_{s}(k)$.
In this scenario, if any number between $x'-x+c$ and $x'-x-c$ can be expressed as a non-negative integral combination of the $m_{s}(k)$'s, then we obtain a monochromatic solution.
Hence, for some number $Q= x'-x$, finding a non-negative integral combinations of $m_{s}(k)$ whose value is roughly $Q$ is sufficient.
In other words, we wish to find a non-negative vector $\mathbf{v}$ such that $\mathbf{v}\cdot \mathbf{m}^k = Q\pm c$.

 As this number $Q$ is obtained from the intervals of our choice, we have some control on the value of $Q$. In particular, we can take $Q$ to be very large.
With such a large number $Q$, we can reduce our problem to finding an integral vector $\mathbf{v}'$, which need not be non-negative. 

Indeed, if $Q$ is sufficiently large, then we can find a large number $q$ so that $Q'=Q- q \sum_{s\in [k_0-k]} m_s(k)$ is still positive. 
Then if we can find another vector $\mathbf{v}' \in [-q,q]^{k_0-k}$ such that $\mathbf{v}'\cdot \mathbf{m}^k = Q'\pm c$, then the non-negative vector $(q,\dots, q) + \mathbf{v}'$ provides a desired non-negative integral combination. 
The following lemma allows us to express a given number $Q'$ as an integral combination whose coefficients are allowed to be negative while 
having small absolute values, hence providing such a vector $\mathbf{v}'$. In addition, we can restrict the support of such a vector and this fact will be also important for our application.

\begin{lemma}\label{lem: estimation}
Suppose $0< \frac{1}{k} \ll \frac{1}{\ell} \ll \frac{1}{q} \ll \frac{1}{\|p\|}, \frac{1}{d} \leq \frac{1}{2}$. For a given positive integer $Q'$ at most $q k^{d-1}$, there exists an integer vector $\mathbf{v}'\in [-\ell,\ell]^{k_0-k}$ satisfying
$$\mathbf{v}'\cdot \mathbf{m}^k = Q' \pm \ell$$
and $\mathbf{v}'_i=0$ for all $i \notin \{ t 2^{j} : t\in [d-1], 1\leq j \leq q+ \frac{1}{2} \log{k}\}$.
\end{lemma}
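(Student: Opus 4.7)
The plan is to build $\mathbf{v}'$ as an integer linear combination of the vectors $\mathbf{b}^{t, 2^j}$ with $t \in [d-1]$ and $j \in [1, q + \tfrac{1}{2}\log k]$, so that the support condition is automatic. First I would compute the relevant inner products: by Taylor's theorem $m_s(k) = \sum_{i=1}^d \frac{p^{(i)}(k)}{i!} s^i$, and combining this with the identity $\sum_{j=0}^t (-1)^j \binom{t}{j} j^i = (-1)^t t!\,S(i,t)$ (Stirling numbers of the second kind, vanishing for $i<t$) yields
$$B_{t,j} := \mathbf{b}^{t, 2^j} \cdot \mathbf{m}^k = (-1)^t t! \sum_{i=t}^d \frac{p^{(i)}(k)}{i!} S(i,t)\, 2^{ji},$$
whose leading term is $A_{t,j} := (-1)^t p^{(t)}(k)\, 2^{jt}$, of order $k^{d-t} 2^{jt}$ throughout our range of $j$.

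Next I would construct the coefficients $\alpha_{t,j}$ stagewise in $t = 1, 2, \ldots, d-1$. At stage $t$, the leading parts $A_{t,j}$ form a geometric progression in $j$ with ratio $2^t$, so a balanced base-$2^t$ greedy expansion of the current residue against $B_{t,j}$'s produces integer coefficients $\alpha_{t,j}$ with $|\alpha_{t,j}| = O(2^t)$. The key observation is that the next-to-leading term of $B_{t,j}$ is a fixed multiple of $A_{t+1, j}$; hence the "error" introduced at stage $t$ has the same shape as the residue that stage $t+1$ is designed to process, and the induction continues. The window $j \leq q + \tfrac{1}{2}\log k$ ensures that at every stage the available $B_{t, j}$'s span enough dynamic range to cover the residue inherited from stage $t-1$ plus the propagated errors.

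After the $d-1$ main stages, only a residue at scale $O(2^{qd} k^{d/2})$ remains, and its leading shape is the $k$-independent quantity $(-1)^d d!\, a_d\, 2^{jd}$ that would be directly handled by $\mathbf{b}^{d, s}$, which is disallowed in our support. To eliminate it I would append one further stage using the allowed "difference" vectors
$$G_j := \mathbf{b}^{d-1, 2^{j+1}} - 2^{d-1}\, \mathbf{b}^{d-1, 2^j},$$
whose inner product $G_j \cdot \mathbf{m}^k = (-1)^{d-1}(d-1)!\, a_d \binom{d}{2}\, 2^{jd + d - 1}$ is $k$-independent and geometric in $j$ with ratio $2^d$. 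A balanced base-$2^d$ expansion using these brings the residue down to a constant depending only on $\|p\|$ and $d$, which is at most $\ell$ thanks to the hierarchy $1/\ell \ll 1/q \ll 1/\|p\|, 1/d$.

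Finally, the entrywise bound is easy: each position $r \cdot 2^j$ in the support is hit by at most $O(d)$ of the basic vectors $\mathbf{b}^{t, 2^{j'}}$ (those with $r \cdot 2^j = r' \cdot 2^{j'}$ for some $r' \in [t]$), each contributing a coefficient of size $O_d(1)$, so $\|\mathbf{v}'\|_\infty \leq \ell$. The main obstacle is the careful bookkeeping for cascading errors in the iterative scheme: beyond the leading subleading term of $B_{t,j}$ handled by stage $t+1$, there are further subleading contributions at scales matching $A_{t+2, j}, \ldots, A_{d-1, j}$ and, ultimately, $A_{d, j}$. I would need to verify that these are absorbed cleanly by the subsequent stages (and by the final $G_j$-stage for the $A_{d, \cdot}$-like residue), and that the bound $Q' \leq q k^{d-1}$ is precisely tuned so the number of significant base-$2^t$ "digits" at each stage fits within the allowed window of $j$.
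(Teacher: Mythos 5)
Your plan is essentially correct, and it does take a genuinely different route from the paper's proof, so it is worth comparing. The paper never uses the raw vectors $\mathbf{b}^{t,s}$ directly for the expansion: it immediately passes to the telescoped vectors $\mathbf{c}^{t,i}=\pm\bigl(\mathbf{b}^{t,2^{q(i+1)}}-2^{qt}\mathbf{b}^{t,2^{qi}}\bigr)$, whose inner products $c_{t,i}$ have the leading $k^{d-t}$ coefficient cancelled, leaving a single merged chain $c_{d-1,1},\dots,c_{1,w_1}$ whose consecutive ratios lie in $(1,2^{2qd}]$ and which runs from an $O_{q,\|p\|}(1)$ bottom all the way past $qk^{d-1}$; one application of Proposition\refp{binary expansion} then finishes. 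You instead use the raw $\mathbf{b}^{t,2^j}$ in $d-1$ successive stages (stage $t$ brings the residue from the $k^{d-t+1}$ scale down to the $k^{d-t}$ scale, since $|B_{t,j}|\approx|p^{(t)}(k)|\,2^{jt}$ is an approximate geometric progression of ratio $2^t$ once $2^j\ll k$), and only introduce difference vectors $G_j=\mathbf{b}^{d-1,2^{j+1}}-2^{d-1}\mathbf{b}^{d-1,2^j}$ at the very last stage to clear the remaining $O(2^{d-1}k)$ residue, exactly because the lowest raw inner products $B_{d-1,j}\sim k\cdot 2^{j(d-1)}$ bottom out at scale $k$ and you need something $k$-free. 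That is a valid alternative: the chaining of ranges works (stage~$1$ goes up to $\sim 2^{q}k^{d-1/2}\gg qk^{d-1}$, and for $t\geq 2$ the top of stage~$t$, $\sim 2^{qt}k^{d-t/2}$, dominates the $\sim k^{d-t+1}$ residue inherited from stage $t-1$), the entrywise bound is $O_d(1)\leq\ell$, and the supports land inside $\{t\cdot 2^j:t\in[d-1],\,1\leq j\leq q+\tfrac{1}{2}\log k\}$.

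One place where your write-up is shakier than it needs to be: the ``cascading error'' bookkeeping you flag as the main obstacle largely disappears if you perform the greedy expansion directly against the actual values $B_{t,j}$ (which Proposition\refp{binary expansion} permits — it only requires consecutive ratios to be bounded, not exact geometricity), rather than against the idealized $A_{t,j}$ and then correcting. The ratio $B_{t,j+1}/B_{t,j}=2^t\bigl(1+O(2^{j+1}/k)\bigr)$ stays in $(2^{t-1},2^{t+1})$ throughout the allowed $j$-range, so the residue after stage $t$ is automatically $O(|B_{t,1}|)=O(2^tk^{d-t})$ with coefficients $O(2^t)$, and there is nothing to cascade; your nice identity $B_{t,j}-A_{t,j}\approx -\tfrac{t}{2}A_{t+1,j}$ then becomes a pleasing sanity check rather than a load-bearing step. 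Your stated bound ``residue after stage $d-1$ is $O(2^{qd}k^{d/2})$'' is also much looser than the true $O(2^{d-1}k)$, though both are within reach of the $G_j$-stage. In short: the approach is sound and works, but the cleaner presentation of it drops the error-propagation framing in favour of expanding directly in the $B_{t,j}$ and invoking the approximate-geometric-progression greedy at each stage.
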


In order to prove this lemma, we collect several useful propositions.
First, note that expressing a number $Q$ as a non-negative integral combination of a geometric series $1,t,t^2,\dots$ is easy. We simply take the largest $i$ and $c\in [t-1]$ such that $c t^i\leq Q$ and reduce the problem to the problem of expressing $Q- c t^{i}$ as a non-negative integral combination of $1,t,t^2,\dots$. In fact, this greedy approach works with other series as long as the series does not grow too fast. The following proposition confirms this. As the proof is just what we described, we omit the proof.

\begin{proposition}\label{prop: binary expansion}
Let $t>1$ be an integer and $c_1,\dots, c_m$ be a sequence of positive numbers satisfying $c_{i}< c_{i+1} <  t c_{i}$ for all $i\in [m-1]$. 
If $Q$ is a positive number at most $tc_m$, then there exist numbers $b_1,\dots, b_m \in [0,t-1] $ satisfying $\sum_{i=1}^{m} b_i c_i = Q\pm c_1$.
\end{proposition}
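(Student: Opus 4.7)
The plan is a straightforward greedy positional expansion: I will write $Q$ in the mixed-radix system with weights $c_m > c_{m-1} > \cdots > c_1$, processed from the largest weight down to the smallest. Setting $R_m := Q$, I will iterate $i$ from $m$ down to $1$ and at each step define
$b_i := \min(\lfloor R_i/c_i \rfloor, t-1)$ and $R_{i-1} := R_i - b_i c_i$, so that each $b_i$ lies in $[0, t-1]$ by construction. The entire content of the proof is then to control the remainders $R_i$.

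The key invariant is a single downward induction: for every $i \leq m-1$, the remainder satisfies $R_i < tc_i$. Given this, the floor in the definition of $b_i$ is at most $t-1$, so it is attained by the minimum, and hence $R_{i-1} = R_i - b_i c_i \in [0, c_i)$ directly from the definition of the floor. Combined with the growth hypothesis $c_i < tc_{i-1}$, this yields $R_{i-1} < c_i < tc_{i-1}$, propagating the invariant one step further; when $i = 1$, it yields the final bound $R_0 \in [0, c_1)$, which is exactly what we need.

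The only subtle point, and the single mild obstacle, is the initial step $i = m$. The hypothesis only gives $R_m = Q \leq tc_m$ with equality possible, so the raw floor may equal $t$ rather than at most $t-1$. In the boundary case $Q = tc_m$ the cap forces $b_m = t-1$ and leaves $R_{m-1} = c_m$, slightly violating the strict invariant. But this is harmless: the growth hypothesis $c_m < tc_{m-1}$ still gives $R_{m-1} < tc_{m-1}$, so the invariant is recovered starting from step $m-1$. Running the iteration to completion and rearranging $R_0 = Q - \sum_{i=1}^m b_i c_i$ gives $\sum_{i=1}^m b_i c_i = Q - R_0 \in (Q - c_1, Q]$, which is exactly the claim $Q \pm c_1$. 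I do not expect any genuine technical difficulty — the proposition is essentially a rigorous encoding of the greedy idea sketched in the preceding sentence, and the only item requiring care is the $i = m$ boundary case above.
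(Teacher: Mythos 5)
Your proof is correct and takes essentially the same approach the paper intends: the paper explicitly says the proof is the greedy, largest-weight-down decomposition sketched in the preceding sentence and omits the details, and your mixed-radix argument with the invariant $R_i < tc_i$ is exactly that greedy proof made rigorous. The extra care at the $i=m$ boundary (where $Q = tc_m$ is allowed) is the right place to pause and is handled correctly.
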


As we wish to express a number $Q$ as an integral combination of the numbers $m_{s}(k)$, it is useful to know the properties of the numbers $m_{s}(k)$. In fact, if we compute $m_{s}(k) = p(k+s)-p(k)$ term by term, we obtain the following $k$-ary expansion.
\begin{align}\label{eq: mik}
m_s(k) = \sum_{i=0}^{d-1} \left( \sum_{r=i+1}^{d} a_{r} \binom{r}{i} s^{r-i}  \right)k^{i} ~\text{ and }~ 
\mathbf{m}^k = \sum_{i=0}^{d-1} \left( \sum_{r=i+1}^{d} a_{r} \binom{r}{i} \mathbf{id}^{r-i}  \right)k^{i}
\end{align}
Based on this, if $1/k  \ll 1/\ell \ll 1/d,1/\|p\|$ and $s\leq  2^{2q}k^{1/2}$, we have 
\begin{align}\label{eq: mk property}
m_s(k) = a_d  s d  k^{d-1} \pm \ell k^{d-3/2}.
\end{align}
In order to prove Lemma~\ref{lem: estimation}, we plan to use the vectors $\mathbf{b}^{t,s}$.
The following proposition allows us to understand the $k$-ary expansion of $\mathbf{m}^k \cdot \mathbf{b}^{t,s}$.

\begin{proposition}\label{prop: mb size}
Suppose $0< \frac{1}{q} \ll \frac{1}{\|p\|}, \frac{1}{d}<1$ and $t\in [d]$. Then there exist integers $h_{t,0},h_{t+1,0},$ $h_{t+1,1},h_{t+2,0},\dots, h_{d,d-t} \in [-q,q]$ satisfying the following for all $s,k\in \mathbb{N}$.
\begin{align}\label{eq: binomial equality}
\mathbf{m}^{k}\cdot \mathbf{b}^{t,s}
=  \sum_{i=0}^{d-t} \sum_{r=t+i}^{d} h_{r,i} s^{r-i} k^i.
\end{align}
Moreover, $h_{d,d-t}$ is always non-zero. If $t\leq d-1$, then $h_{d,d-t-1}$ is also non-zero.
\end{proposition}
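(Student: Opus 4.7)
The plan is to recognize $\mathbf{m}^{k}\cdot \mathbf{b}^{t,s}$ as (up to sign) the $t$-th forward finite difference of $p$ with step $s$ evaluated at $k$. Reading off the nonzero entries of $\mathbf{b}^{t,s}$ directly, we get
\[
\mathbf{m}^{k}\cdot \mathbf{b}^{t,s} \;=\; \sum_{j=1}^{t} (-1)^j \binom{t}{j}\, m_{js}(k).
\]
Substituting $m_{js}(k)=p(k+js)-p(k)$ and using the identity $\sum_{j=0}^{t}(-1)^j\binom{t}{j}=0$ (valid because $t\ge 1$), the constant $p(k)$ contributions cancel and we arrive at the cleaner expression
\[
\mathbf{m}^{k}\cdot \mathbf{b}^{t,s} \;=\; \sum_{j=0}^{t} (-1)^j \binom{t}{j}\, p(k+js).
\]

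Next I would expand each $p(k+js)=\sum_{r=0}^d a_r(k+js)^r$ via the binomial theorem, swap the order of summation, and extract the scalar factor $\sum_{j=0}^{t}(-1)^j\binom{t}{j}j^{r-i}$. This is the standard identity
\[
\sum_{j=0}^{t}(-1)^j\binom{t}{j} j^{n} \;=\; (-1)^t\, t!\, S(n,t),
\]
where $S(n,t)$ denotes a Stirling number of the second kind; in particular the sum vanishes whenever $n<t$. Hence only the pairs $(r,i)$ with $t+i\le r\le d$ survive, which is exactly the index range in \eqref{eq: binomial equality}, and the coefficients must take the form
\[
h_{r,i} \;=\; (-1)^t\, t!\, a_r \binom{r}{i}\, S(r-i, t) \;\in\; \mathbb{Z}.
\]
A crude bound shows $|h_{r,i}|$ is at most some function of $\|p\|$ and $d$ only, so under the hypothesis $1/q\ll 1/\|p\|,1/d$ we indeed get $|h_{r,i}|\le q$.

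For the non-vanishing claims I would just read off the extremal coefficients. Since $S(t,t)=1$, the coefficient $h_{d,d-t}=(-1)^t t!\, a_d\binom{d}{t}$ is a nonzero integer multiple of the positive leading coefficient $a_d$, hence nonzero. When $t\le d-1$ we use $S(t+1,t)=\binom{t+1}{2}>0$, which shows $h_{d,d-t-1}$ is likewise a nonzero multiple of $a_d$. The only delicate step is the bookkeeping of the double sum together with the correct invocation of the Stirling identity; there is no deeper obstacle, because the crucial non-vanishing property reduces to $a_d\neq 0$, which is baked into our standing assumption on $p$.
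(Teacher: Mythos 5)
Your proof is correct, and it takes a genuinely different route from the paper's. The paper never expands $\mathbf{m}^k\cdot\mathbf{b}^{t,s}$ as a finite difference of $p$; instead, it works directly with the quantities $c'(t,i):=\mathbf{id}^i\cdot\mathbf{b}^{t,1}$ (which in your notation are exactly $(-1)^t t!\,S(i,t)$). To show that $c'(t,i)$ vanishes for $i<t$ and is nonzero for $i\ge t$, the paper uses a linear-algebra argument: the vectors $\mathbf{id}^1\restriction_{[t]},\dots,\mathbf{id}^{t-1}\restriction_{[t]}$ span the orthogonal complement of $\mathbf{b}^{t,1}$ in $\mathbb{R}^t$, and for $i\ge t$ the vector $\mathbf{id}^i\restriction_{[t]}$ does not lie in that span, hence cannot be orthogonal to $\mathbf{b}^{t,1}$. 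It then pairs this with the $k$-ary expansion~\eqref{eq: mik} of $\mathbf{m}^k$, exactly as you do. So the two proofs compute the same coefficients $h_{r,i}=a_r\binom{r}{i}c'(t,r-i)$; they differ only in the justification of the sign/vanishing behaviour of $c'(t,\cdot)$. Your Stirling-number route is more explicit and delivers the closed form $h_{r,i}=(-1)^t t!\,a_r\binom{r}{i}S(r-i,t)$ at no extra cost, making the two nonvanishing claims ($S(t,t)=1$ and $S(t+1,t)=\binom{t+1}{2}$) completely transparent. The paper's route trades that explicitness for staying with elementary facts about polynomials and avoids invoking the Stirling identity. Both are sound, and both verify the same index range $t+i\le r\le d$, $0\le i\le d-t$. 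One minor point worth making explicit in your write-up: the $n=0$ case of the inner sum $\sum_{j=0}^{t}(-1)^j\binom{t}{j}j^n$ requires the convention $0^0=1$ (so that the $j=0$ term in the binomial expansion of $p(k+js)$ correctly reconstitutes $p(k)$); with that convention the sum is $0$ for $n=0$ and $t\ge1$, consistent with $S(0,t)=0$.
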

\begin{proof}
We first collect the following three standard facts.
As these are elementary facts, we state them without proofs.

First, it is well-known that $\mathbf{id}^1\restriction_{[t]},\dots, \mathbf{id}^{t-1}\restriction_{[t]}$ are linearly independent vectors in $\mathbb{R}^{t}$.\COMMENT{
If $\sum_{i\leq t-1} c_i \mathbf{id}^i = \mathbf{0}$, then the polynomial $c_1x+ c_2x^2+\dots + c_{t-1}x^{t-1}$ has roots $0,1,2,\dots, t$, a contradiction as a polynomial of degree $t-1$ has at most $t-1$ solutions.}
Second, the following holds for each $i\in [t-1]$.
\begin{align}\label{eq: dot zero b}
 \mathbf{id}^i \cdot \mathbf{b}^{t,s} = 0.
 \end{align}
\COMMENT{
Consider $(1-z)^t$ and take derivative for $0,1,2,\dots, t-1$ times. Put $z=1$ to obtain equalities.
Take an appropriate linear combinations of these equalities, then we can conclude this.
}
Third, for each $i\geq t$, the vector $\mathbf{id}^{i}\restriction_{[t]}$ is not a linear combination of 
$\mathbf{id}^1\restriction_{[t]},\dots, \mathbf{id}^{t-1}\restriction_{[t]}$. 
\COMMENT{
If $\mathbf{id}^i + \sum_{j\leq t-1} c_j \mathbf{id}^j = \mathbf{0}$, then the polynomial $x^i + c_{t-1}x^{t-1} + \dots + c_2 x^2 + c_1x$ has roots $0,1,2,\dots, t$. However, it is known that such a polynomial has at most $t-1$ nonzero distinct roots (which can be proved by using induction). Thus we have what we want.
}

The first fact and \eqref{eq: dot zero b} with $s=1$ together imply that 
 ${\rm Span}(\mathbf{id}^1\restriction_{[t]},\dots, \mathbf{id}^{t-1}\restriction_{[t]})$ is the orthogonal complement of the vector $\mathbf{b}^{t,1}$ in $\mathbb{R}^t$.
Thus the third fact implies that, for each $i \in [t,d]$, the vector $\mathbf{id}^{i}\restriction_{[t]}$ does not lie in the orthogonal complement of $\mathbf{b}^{t,1}$ in $\mathbb{R}^{t}$. In other words, we have
\begin{align}\label{eq: id dot b}
\mathbf{id}^{i} \cdot \mathbf{b}^{t,s} = (\mathbf{id}^{i}  \cdot \mathbf{b}^{t,1})s^{i} = \left\{\begin{array}{ll}
0 & \text{if } i\in [t-1] \\
\text{$c'(t,i) s^{i}$}  & \text{if } i\in [t,d]
\end{array}\right.
\end{align}\COMMENT{We have an upper bound on the absolute value of $\mathbf{id}^{i}\restriction_{[t]}  \cdot \mathbf{b}^{t,1} $ which is purely in terms of $d$ and $t$ and $1/q\ll 1/d \leq 1/t$.}
for some non-zero integer $c'(t,i)$ with $|c'(t,i)|\leq q^{1/2}$ as $1/q \ll 1/d.$
By using \eqref{eq: mik}, the following equality is straightforward for each $t\in [d]$.
$$
\mathbf{m}^{k} \cdot \mathbf{b}^{t,s} = \sum_{i=0}^{d-1} \left( 
 \sum_{r=i+1}^{d} a_r \binom{r}{i} (\mathbf{id}^{r-i} \cdot \mathbf{b}^{t,s})
\right) k^i.
$$
However, \eqref{eq: id dot b} implies that $ \mathbf{id}^{r-i} \cdot \mathbf{b}^{t,s}$ is zero for $r-i<t$ and is $c'(t,r-i)s^{r-i}$ otherwise. So we obtain
$$ \mathbf{m}^k \cdot \mathbf{b}^{t,s} 
= \sum_{i=0}^{d-t} \left(\sum_{r=t+i }^{d} a_r \binom{r}{i} c'(t,r-i) s^{r-i}\right) k^{i}.$$
By letting 
$$h_{r,i}= a_r \binom{r}{i} c'(t,r-i)$$ for each $r\in [t,d]$ and $i\in [0,d-t]$, we obtain the desired inequality $|h_{r,i}|\leq q$ as $|c'(t,r-i)|\leq q^{1/2}$ and $1/q \ll 1/\|p\|$. 

 Furthermore, because \eqref{eq: id dot b} implies that $c'(t,i)$ is nonzero for $i\geq t$, the number $h_{d,d-t} = a_d\binom{d}{d-t} c'(t,t)$ is also nonzero. If $t\leq d-1$, then \eqref{eq: id dot b} implies that $c'(t,t+1)$ is non-zero, so the number $h_{d,d-t-1} = a_d\binom{d}{d-t-1} c'(t,t+1)$ is also a nonzero number, confirming the moreover part of the statement. This proves the proposition.
\end{proof}

The previous proposition provides a $k$-ary expression of a number $\mathbf{m}^k \cdot \mathbf{b}^{t,s}$ with small coefficients. 
To prove Lemma~\ref{lem: estimation}, we will find our desired vector $\mathbf{v}'$ as a sum of several $\mathbf{b}^{t,s}$, then Proposition~\ref{prop: mb size} provides a better prediction on the value of $\mathbf{m}^k\cdot \mathbf{v}'$. 

\begin{proof}[Proof of Lemma~\ref{lem: estimation}]
We plan to define several vectors
$\mathbf{c}^{d-1,1}, \mathbf{c}^{d-1,2},\dots, \mathbf{c}^{d-1,w_{d-1}}, \mathbf{c}^{d-2,1},\dots \mathbf{c}^{1,w_1}$ in such a way that $c_{t,i}:= \mathbf{c}^{t,i}\cdot \mathbf{m}^k$ forms a sequence satisfying the conditions of Proposition~\ref{prop: binary expansion}. This would give us appropriate numbers $b_{d-1,1},\dots, b_{1,w_{1}}$ in such a way that 
$\sum b_{t,i} c_{t,i} = Q\pm \ell$ holds, then the vector $\mathbf{v}'= \sum b_{t,i} \mathbf{c}^{t,i}$ will be our desired vector with $\mathbf{v}'\cdot \mathbf{m}^k = \sum_{i} b_{t,i}c_{t,i} = Q\pm \ell$.

For this purpose, 
for each $t\in [d-1]$ and $i\in \mathbb{N}$,
we choose $\mathbf{c}^{t,i}$ to be either  $$\mathbf{b}^{t,2^{q(i+1)} } -  2^{qt} \mathbf{b}^{t, 2^{qi} } \text{ or } -(\mathbf{b}^{t,2^{q(i+1)} } -  2^{qt} \mathbf{b}^{t, 2^{qi} })$$
in such a way that $c_{t,i}:= \mathbf{c}^{t,i}\cdot \mathbf{m}^{k}$ is non-negative. Note that $\mathbf{c}^{t,i}$ has nonzero values only on its $j$-th coordinate for $j \in \{ t 2^{j'} : t\in [d-1], j'\in \{qi, q(i+1)\} \}$.

We now verify that these choices ensure that the sequence $c_{t,i}$ satisfies the conditions of the Proposition~\ref{prop: binary expansion}. 
 By Proposition~\ref{prop: mb size}, we have the following equality with some integers $h_{r,j}$ satisfying $|h_{r,j}|\leq q$.
\begin{eqnarray}\label{eq: c expression}
c_{t,i} &=& \left|\sum_{j=0}^{d-t} \sum_{r=t+j}^d h_{r,j}  2^{q(i+1)(r-j)}k^{j} - 2^{qt} \cdot \sum_{j=0}^{d-t} \sum_{r=t+j}^d h_{r,j} 2^{qi(r-j)} k^{j}\right| \nonumber \\
&= & \left|\sum_{j=0}^{d-t-1} \sum_{r=t+j+1}^{d} h_{r,j} (2^{q(r-j)} - 2^{qt}) 2^{qi(r-j)}k^{j}\right|.
\end{eqnarray}
The final equality holds as the term for $r=t+j$ vanishes because $2^{q(r-j)}-2^{qt}$ is zero when $r=t+j$.
Moreover, if $k$ is much bigger than $q$, then the term with highest exponent in $k$ becomes much bigger than all other terms. The following claim provides this fact.
\begin{claim}\label{cl: increasing}
    If $i\in \mathbb{N}$ satisfies $2^{q(i-1)}\leq k^{1/2}$, then we have 
\begin{align}\label{eq: increasing}
c_{t,i-1} < c_{t,i} \leq 2^{2qd} c_{t,i-1} 
\enspace \text{ and } \enspace 
q 2^{qi(t+1)} k^{d-t-1} < c_{t,i} < q 2^{q(i+1)(t+1)} k^{d-t-1}.
\end{align}
\end{claim}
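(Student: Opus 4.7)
The plan is to show that among the terms appearing in the double sum \eqref{eq: c expression}, the single term with $(r,j)=(d,d-t-1)$ dominates all others under the hypothesis $2^{q(i-1)}\leq k^{1/2}$, and then to read off both parts of \eqref{eq: increasing} from this dominant term.

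First I would isolate the leading term. Setting $(r,j)=(d,d-t-1)$ in \eqref{eq: c expression} gives a contribution of magnitude
$$|h_{d,d-t-1}|\bigl(2^{q(t+1)}-2^{qt}\bigr)\,2^{qi(t+1)}\,k^{d-t-1}.$$
By the moreover-part of Proposition~\ref{prop: mb size}, $h_{d,d-t-1}$ is a nonzero integer with $|h_{d,d-t-1}|\leq q$, so this quantity lies between $2^{qt}(2^q-1)\cdot 2^{qi(t+1)}k^{d-t-1}$ and $q\cdot 2^{q(t+1)}\cdot 2^{qi(t+1)}k^{d-t-1}=q\cdot 2^{q(i+1)(t+1)}k^{d-t-1}$.

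Next I would bound every other term. A term with indices $(r,j)\neq(d,d-t-1)$ has absolute value at most $q\cdot 2^{q(i+1)(r-j)}k^j$. The constraint $r\geq t+j+1$ combined with $r\leq d$ forces $r=d$ whenever $j=d-t-1$, so any pair distinct from $(d,d-t-1)$ must satisfy $j\leq d-t-2$. Using the hypothesis in the form $2^{qi}\leq 2^{q}k^{1/2}$ gives $2^{qi(r-j)}\leq 2^{q(r-j)}k^{(r-j)/2}$, and the ratio of such a term to the leading one is bounded by $O\bigl(q\cdot 2^{O(qd)}\cdot k^{-1/2}\bigr)$, which is $o(1)$ since $1/k\ll 1/q\ll 1/d$. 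Summing over the at most $d^2$ non-leading pairs, their combined contribution is smaller than, say, half of the leading term.

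Combining these two estimates by the triangle inequality yields the second part of \eqref{eq: increasing}: the upper bound $c_{t,i}<q\cdot 2^{q(i+1)(t+1)}k^{d-t-1}$ is immediate, and the lower bound $c_{t,i}>q\cdot 2^{qi(t+1)}k^{d-t-1}$ follows since $q<2^{qt}(2^q-1)/2$ whenever $q$ is large. Because $2^{q(i-1)}\leq k^{1/2}$ also implies $2^{q(i-2)}\leq k^{1/2}$, the same bounds apply at index $i-1$, giving $c_{t,i-1}<q\cdot 2^{qi(t+1)}k^{d-t-1}<c_{t,i}$ and $c_{t,i}/c_{t,i-1}<2^{2q(t+1)}\leq 2^{2qd}$, which is the first part. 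The main obstacle is the bookkeeping in the second step: one must verify uniformly across every non-leading pair $(r,j)$ that the power of $k$ saved (at least $k^{-1/2}$) beats any multiplicative factor involving $q$ or $d$, which is precisely what the hypothesis $2^{q(i-1)}\leq k^{1/2}$ was tuned to deliver.
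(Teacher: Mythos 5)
Your approach is the same as the paper's: isolate the term $(r,j)=(d,d-t-1)$ in \eqref{eq: c expression}, use the moreover-part of Proposition~\ref{prop: mb size} to make it a genuine leading term, show it dominates all others by exploiting the hypothesis $2^{q(i-1)}\leq k^{1/2}$, and then deduce the first half of \eqref{eq: increasing} by comparing the second half at indices $i$ and $i-1$, exactly as the paper does.

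There is, however, a slip in your bound on the non-leading terms. Replacing $2^{qi(r-j)}$ in the numerator by $2^{q(r-j)}k^{(r-j)/2}$ erases the $i$-dependence of the numerator, while your lower bound on the leading term retains the factor $2^{qi(t+1)}$. For small $i$ the resulting ratio is \emph{not} $O\bigl(q\,2^{O(qd)}k^{-1/2}\bigr)$: already with $i=1$, $d=3$, $t=1$ and $(r,j)=(3,0)$ it comes out of order $q\,2^{O(q)}k^{1/2}$, which is large. The repair is the one the paper actually performs: first write $2^{qi(r-j)}=2^{qi(t+1)}\cdot 2^{qi(r-j-t-1)}$ and cancel the common factor $2^{qi(t+1)}$ against the leading term, and only then apply the hypothesis to the remaining excess, $2^{qi(r-j-t-1)}\leq 2^{q(r-j-t-1)}k^{(r-j-t-1)/2}$. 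Since $r\leq d$ gives $j\leq d-1-(r-j)$, the net power of $k$ in the ratio becomes $\tfrac{1}{2}(r-j-t-1)+j-(d-t-1)\leq \tfrac{1}{2}(j+t+1-d)\leq -\tfrac{1}{2}$ for every $(r,j)\neq(d,d-t-1)$, which is what the $2^{O(qd)}$-sized prefactors cannot overcome. With this correction the rest of your argument (the sandwiching of the leading term, the half-leading-term margin, and the derivation of the first inequality from the second at consecutive indices) goes through as you describe and matches the paper.
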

\begin{proof}
Note that $1/k \ll 1/q \ll 1/\|p\|$ implies $d^2 q 2^{2qd} < k^{1/8}$. As $|h_{r,j}|\leq q$, we have 
\begin{eqnarray*}
\left| \sum_{j=0}^{d-t-2} \sum_{r=t+j+1}^{d} h_{r,j} (2^{q(r-j)} - 2^{qt}) 2^{qi(r-j)}k^{j}  \right|
&\leq& \sum_{j=0}^{d-t-2} \sum_{r=t+j+1}^{d} q 2^{qd} 2^{qi(t+1+r-t-j-1)} k^j\\
&\leq& 
\sum_{j=0}^{d-t-2} \sum_{r=t+j+1}^{d} 
q 2^{2qd} 2^{qi(t+1)} 2^{q(i-1)(r-t-j-1)} k^{j}  \\
&\leq& 
\sum_{j=0}^{d-t-2} \sum_{r=t+j+1}^{d} 
d^{-2} 2^{qi(t+1)}  k^{(r-t-j-1)/2+j+1/8}  \\
&\leq & 2^{qi(t+1)} k^{d-t - 5/4}.
\end{eqnarray*}
Here, the second inequality holds as $r-t-j-1\leq d$ and
the final inequality holds as $(r-t-j-1)/2+j \leq d-t-3/2$ for $j\leq d-t-2$ and $r\leq d$.
As $t\in [d-1]$, the number $h_{d,d-t-1}$ is non-zero by Proposition~\ref{prop: mb size}, so we obtain
\begin{eqnarray*}
c_{t,i}&=&  \left|h_{d,d-t-1} (2^{q(t+1)} - 2^{qt})2^{qi(t+1)} (k^{d-t-1} \pm k^{d-t-5/4} )\right|.
\end{eqnarray*}
Moreover, $h_{d,d-t-1}$ is a non-zero integer in $[-q,q]$ and $1/k\ll 1/q,1/\|p\|$ and $q<2^{q(t+1)}-2^{qt} \leq 2^{q(t+1)}$, the above equality yields the second desired inequality as follows:
\begin{align*}
    q 2^{qi(t+1)} k^{d-t-1} < c_{t,i} < q 2^{q(i+1)(t+1)} k^{d-t-1}.
\end{align*}
As this inequality holds for $i-1$ as well,
this implies that
$$c_{t,i-1}<q 2^{qi(t+1)} k^{d-t-1} < c_{t,i} <q 2^{q(i+1)(t+1)} k^{d-t-1} < 2^{2q(t+1)} c_{t,i-1} \leq 2^{2qd} c_{t,i-1}$$
and proves the claim.
\end{proof}

Let $w_1$ be the number satisfying $2^{q(w_1-1)} \leq  k^{1/2} < 2^{q w_1}$.
Then Claim~\ref{cl: increasing} with this choice of $w_1$ implies that 
$c_{1,1},\dots, c_{1,w_1}$ is a sequence satisfying \eqref{eq: increasing}. Moreover, we obtain 
$$ c_{1,w_1} > q 2^{2qw_1} k^{d-2} \geq q k^{d-1}.$$

For $t\geq 2$, assume that we have defined $w_{t-1}$.
Consider the number $w'$ satisfying $2^{q(w'-1)}\leq k^{1/2} \leq 2^{qw'}$.
Then Claim~\ref{cl: increasing} with this choice implies that
$$c_{t,w'} > q 2^{qw'(t+1)} k^{d-t-1}
\geq q k^{(t+1)/2} k^{d-t-1} \geq q k^{d-t} > 2^{-2qd} c_{t-1,1}.
$$
Here, the final inequality holds by \eqref{eq: increasing}.
Now let $w_t$ be the smallest number such that $c_{t,w_{t}} \leq c_{t-1,1} \leq 2^{qd} c_{t,w_t}$. 
By the definition of $w_t$ and the above inequality, $w_t$ exists and it has value at most $w'$. Hence, Claim~\ref{cl: increasing} applies for all $i\leq w_{t}$, thus the sequence $c_{t,1},\dots, c_{t,w_{t}}$ satisfies \eqref{eq: increasing}. Repeating this yields the sequence 
$$c_{d-1,1},\dots,c_{d-1,w_{d-1}}, c_{d-2,1},\dots,c_{1,w_1}$$
satisfying
that a term in the sequence is bigger than the previous term and at most $2^{2qd}$ times the previous term.
Furthermore, $c_{d-1,1}$ can be written as a function of $q,d$ and the coefficients of the polynomial $p(z)$,
the hierarchy $1/\ell \ll 1/q, 1/\|p\|$ implies $c_{d-1,1} \leq \ell$. 

Proposition~\ref{prop: binary expansion} implies that there exists numbers $b_{t,i} \in [2^{2qd}]$ satisfying 
$$\sum_{t\in [d-1]}\sum_{i\in [w_t]} b_{t,i} c_{t,i}  = Q\pm \ell.$$
Let $\mathbf{v}' = \sum_{i\in [w_t]} b_{t,i} \mathbf{c}_{t,i}$. Then it is easy to see that each coordinate of $\mathbf{c}_{t,i}$ has absolute value at most $\ell$ as $1/\ell \ll 1/q , 1/\|p\|$. 
Moreover, by our choices of $\mathbf{c}_{t,i}$ it is easy to see that $\mathbf{v}_i=0$ for all $i \notin \{ t 2^{j} : t\in [d-1], 1\leq j \leq q+ \frac{1}{2} \log{k}\}$. This proves the lemma.
\end{proof}

The following proposition establishes an `independence' over the numbers $m_{s}(k)$, by showing that any nontrivial integral combination of $t<d$ numbers $m_{1}(k), m_{2}(k), \dots, m_{t}(k)$ is far from zero.

\begin{proposition}\label{prop: t terms independent}
Suppose $0<1/k \ll 1/q \ll 1/\|p\|, 1/d \leq 1/2$ and $t\in [d-1]$.
For any nonzero vector $(c_1,\dots, c_{d-1})\in [- \frac{k}{q},\frac{k}{q}]^{t}$, we have
$|\sum_{i\in [t]} c_{i} \cdot m_{i}(k))| >\frac{1}{2} k^{d-t}$.
\end{proposition}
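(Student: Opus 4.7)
The plan is to expand $L := \sum_{s=1}^{t} c_s m_s(k)$ as a polynomial in $k$ and locate its highest nonzero coefficient. Using \eqref{eq: mik}, I would write
\[
L = \sum_{i=0}^{d-1} B_i k^i, \qquad B_i := \sum_{s=1}^{t} c_s \sum_{r=i+1}^{d} a_r \binom{r}{i} s^{r-i}.
\]
Introducing the power sums $T_\ell := \sum_{s=1}^{t} c_s s^\ell$ and substituting $\ell = r-i$ rearranges this to $B_{d-j} = \sum_{\ell=1}^{j} a_{d-j+\ell} \binom{d-j+\ell}{d-j} T_\ell$ for $j \in [d]$. Crucially, the coefficient of $T_j$ in $B_{d-j}$ is $a_d \binom{d}{j} \neq 0$, so the map $(T_1,\dots,T_t) \mapsto (B_{d-1},\dots,B_{d-t})$ is given by a triangular matrix with nonzero diagonal.

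The heart of the argument is to show that some $B_{d-j^*}$ with $j^* \in [t]$ is nonzero, which I would prove by contradiction. If $B_{d-1} = \cdots = B_{d-t} = 0$, then the triangular system above forces $T_1 = \cdots = T_t = 0$, i.e.\ $\sum_{s=1}^{t} c_s s^\ell = 0$ for all $\ell \in [t]$. This is a linear system on $(c_1,\dots,c_t)$ whose matrix $(s^\ell)_{s,\ell \in [t]}$ has determinant $\bigl(\prod_{s=1}^{t} s\bigr)\prod_{1 \leq s < s' \leq t}(s'-s) \neq 0$ (a Vandermonde-type product), forcing $c_1 = \cdots = c_t = 0$ and contradicting non-triviality.

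Once $j^* \in [t]$ is chosen minimally with $B_{d-j^*} \neq 0$, integrality of $c_s$, $a_r$, and the binomials makes $B_{d-j^*}$ a nonzero integer, so $|B_{d-j^*}| \geq 1$. For lower-order terms, each inner sum $\bigl|\sum_{r=i+1}^{d} a_r \binom{r}{i} s^{r-i}\bigr|$ is bounded by some constant $C = C(\|p\|, d)$ since $s \leq t \leq d$, giving $|B_i| \leq t \cdot (k/q) \cdot C = O(k/q)$. Therefore
\[
|L| \geq k^{d-j^*}|B_{d-j^*}| - \sum_{i=0}^{d-j^*-1} k^i |B_i| \geq k^{d-j^*}\bigl(1 - O(d/q)\bigr) > \tfrac{1}{2} k^{d-j^*} \geq \tfrac{1}{2} k^{d-t},
\]
where the hierarchy $1/q \ll 1/d, 1/\|p\|$ absorbs the error term. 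The only delicate step is the triangularity-plus-Vandermonde deduction; everything else is routine bookkeeping in powers of $k$.
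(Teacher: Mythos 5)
Your proof is correct and takes essentially the same approach as the paper: both expand $\sum_s c_s m_s(k)$ in powers of $k$, locate a nonzero integer coefficient among the top $t$ coefficients via a triangular change of variables through the power sums $T_\ell=\sum_s c_s s^\ell$ (which the paper phrases as the submatrix $U'$ being row-equivalent to upper triangular in the basis $\{\mathbf{id}^\ell\restriction_{[t]}\}$, i.e.\ your Vandermonde step), and then use integrality plus the hierarchy $1/q\ll 1/d,1/\|p\|$ to absorb lower-order terms.
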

\begin{proof}
For each $i\in \mathbb{N}$, let $\mathbf{v}^i\in [-q,q]^d$ be an integer vector such that for each $0\leq j\leq d-1$, the $(j+1)$-th coordinate of the vector $\mathbf{v}^i$ is
$$(\mathbf{v}^i)_{j+1} = \sum_{r=j+1}^{d} a_r \binom{r}{j} i^{r-j}.$$
Note the resemblance of this vector with the `$k$-ary expansion' of $m_s(k)$ in the expression \eqref{eq: mik}. 

We will first show that at least one of the last $t$ coordinates of $\sum_{i\in [t]} c_i \mathbf{v}^{i }$ has a non-zero absolute value. Let $U$ be a $d\times t$ matrix whose columns are $\mathbf{v}^{1}, \mathbf{v}^{2},\dots, \mathbf{v}^{t}$. 
Let $U_1,\dots, U_{d}$ be the rows of $U$ and let $U'$ be the submatrix of $U$ with rows $U_{d-t+1},\dots, U_{d}$.
Then for each $0\leq j\leq d-1$, we have
\begin{align*}
U_{j+1}&=\sum_{r=j+1}^{d} a_{r}\binom{r}{j}\left(1^{r-j},2^{r-j},\dots, t^{r-j} \right )  
= a_d \binom{d}{j}  \mathbf{id}^{d-j}_{[t]} + 
\sum_{r=j+1}^{d-1} a_{r}\binom{r}{j}\mathbf{id}^{r-j}_{[t]}.
\end{align*}
Hence $U_{j+1}$ is a linear combination of $\mathbf{id}^{d-j}_{[t]}, \dots, \mathbf{id}^{1}_{[t]}$, with the coefficient of $\mathbf{id}^{d-j}_{[t]}$ being nonzero. It is a standard fact that $\{\mathbf{id}^{t}_{[t]}, \dots, \mathbf{id}^{1}_{[t]}\}$ forms a basis of $\mathbb{R}^{t}$. Using this basis,
the above inequality implies that $U'$ is row-equivalent to an upper triangular matrix with nonzero diagonals, hence $U'$ is a full rank $t\times t$ matrix.
Thus, for any nonzero $(c_1,\dots, c_t)$, at least one of the last $t$ coordinates of $\sum_{i=1}^{t} c_{i} \mathbf{v}^{i}$ is nonzero. 
 Assume that $r\in [d-t,d-1]$ is the largest number such that the $(r+1)$-th coordinate of $\sum_{i=1}^{t} c_{i} \mathbf{v}^{i}$ is nonzero.
As we have $m_{i}(k) = \sum_{j=0}^{d-1} (\mathbf{v}^{i})_{j+1} k^j$, we conclude that
$$\Big|\sum_{i=1}^{t} c_{i} \cdot m_{i}(k))\Big| = \Big|\sum_{j=0}^{d-1}\sum_{i=1}^{t} c_{i} (\mathbf{v}^{i})_{j+1} k^{j} \Big| 
\geq  k^{r} - \sum_{i=0}^{r-1} q  k^{i} > \frac{1}{2}  k^{d-t}.$$
Here, the penultimate inequality holds as $1/q \ll 1/\|p\|$.
\end{proof}

\subsection{Other tools}
The following proposition is also useful for identifying the structure of a coloring with no bad pairs.
\begin{proposition}\label{prop: sum}
    Let $n,m,Q$ be integers with $Q\geq  2n+4m$ where $m$ is odd.
    Let $\phi: [n,Q-n]\rightarrow \{-1,1\}$ be a coloring satisfying the following for any $a,b\in [n,Q-n]$.
    \begin{enumerate}[label={\rm (A\arabic*)}]
        \item \label{A1} $\phi(a)\leq \phi(a+m)$ for all $a \in [n,Q-n-m]$.
        \item \label{A2} If $a+b = Q$, then at least one of $a,b$ has color $-1$.
        \item \label{A3} If $a+b = Q+m$, then at least one of $a,b$ has color $+1$.
    \end{enumerate}
    Then any number
    in $[Q+m+1,2Q-2n-2m]$ can be written as a sum of two numbers of color $+1$ and any number in $[2n+2m,Q-1]$ can be written as a sum of two numbers of color $-1$.
\end{proposition}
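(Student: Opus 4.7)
Proof plan. For each residue $r \in \mathbb{Z}/m\mathbb{Z}$, define the threshold $u_r$ to be the smallest element $a \in [n, Q - n]$ with $a \equiv r \pmod m$ and $\phi(a) = +1$ (setting $u_r := Q - n + m$ if no such element exists). By \ref{A1}, the $+1$-colored elements of class $r$ in $[n, Q-n]$ form the arithmetic progression $[u_r, Q-n] \cap (r + m\mathbb{Z})$ and the $-1$-colored elements form $[n, u_r - m] \cap (r + m\mathbb{Z})$. The first step is to deduce from \ref{A2} and \ref{A3} the paired-threshold bound $u_r + u_{(Q-r) \bmod m} \in [Q+m, Q+2m]$: applying \ref{A2} to the pair $(u_r, Q - u_r)$ forces $\phi(Q - u_r) = -1$, giving $Q - u_r \le u_{(Q-r) \bmod m} - m$, which is the lower bound; applying \ref{A3} to $(u_r - m, Q + 2m - u_r)$ (the hypothesis $Q \ge 2n + 4m$ ensures these stay in $[n, Q-n]$) yields the upper bound.

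Next comes a telescoping argument. Given $N$ in the target range, write $c := N \bmod m$ and $\alpha := (c - Q) \bmod m \in \{0, 1, \dots, m-1\}$, and define $\epsilon(r) := u_r - u_{(r - \alpha) \bmod m}$. Since $r \mapsto (r - \alpha) \bmod m$ is a bijection of $\mathbb{Z}/m\mathbb{Z}$, we have $\sum_r \epsilon(r) = 0$, and each $\epsilon(r) \in \alpha + m\mathbb{Z}$. If $\alpha > 0$, a pigeonhole guarantees the existence of $r^-, r^+ \in \mathbb{Z}/m\mathbb{Z}$ with $\epsilon(r^-) \le \alpha - m$ and $\epsilon(r^+) \ge \alpha$, since otherwise the telescoping sum would be strictly positive or strictly negative. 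If $\alpha = 0$ all $\epsilon(r)$ vanish and we use the two-sided paired-threshold bound from the first step directly.

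Since $(Q - (c - r)) \bmod m = (r - \alpha) \bmod m$, the paired-threshold bound rewrites as
\[ \epsilon(r) + Q + m \;\le\; u_r + u_{(c-r) \bmod m} \;\le\; \epsilon(r) + Q + 2m \]
for every $r$. For (a), evaluating at $r = r^-$ gives $u_{r^-} + u_{(c - r^-) \bmod m} \le Q + m + \alpha$ (or $\le Q + 2m$ when $\alpha = 0$), which equals the smallest integer $N' \equiv c \pmod m$ with $N' \ge Q + m + 1$, hence $\le N$. Setting $a = u_{r^-} + km$ for the smallest $k \ge 0$ with $b := N - a$ in $[u_{(c - r^-) \bmod m}, Q - n]$, both $a$ and $b$ are at or above their thresholds, so $\phi(a) = \phi(b) = +1$. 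For (b), evaluating at $r = r^+$ gives $u_{r^+} + u_{(c - r^+) \bmod m} \ge Q + m + \alpha$. The largest $N' \equiv c \pmod m$ in $[2n + 2m, Q - 1]$ is $Q + \alpha - m$ (or $Q - m$ when $\alpha = 0$), so $N + 2m \le Q + m + \alpha$. This leaves enough room to pick $a \in [n, u_{r^+} - m]$ in class $r^+$ and $b := N - a \in [n, u_{(c - r^+) \bmod m} - m]$ in class $(c - r^+) \bmod m$, both colored $-1$.

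The main technical obstacle is the care needed at the boundary, where a whole residue class inside $[n, Q - n]$ may be monochromatic (making some $u_r$ ``phantom'') or a threshold may sit within $m$ of an endpoint of $[n, Q-n]$, so that the particular $r^\pm$ produced by the pigeonhole fails to yield a valid pair $(a, b)$. In such cases one either invokes an alternative residue with the same $\epsilon$-bound, or shifts $a$ by a multiple of $m$ within the appropriate arithmetic progression; the hypothesis $Q \ge 2n + 4m$ (which guarantees every residue class has several elements on each side of any threshold) together with the oddness of $m$ (which keeps the reflections $a \mapsto Q - a$ and $a \mapsto Q + m - a$ consistent on residue classes) supply exactly the slack needed for this case analysis.
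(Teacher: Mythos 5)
Your route — per-residue thresholds $u_r$, a paired bound $u_r+u_{(Q-r)\bmod m}\in[Q+m,Q+2m]$, and a zero-sum pigeonhole over $\epsilon(r)=u_r-u_{(r-\alpha)\bmod m}$ — is genuinely different from the paper's. The paper works with the residue \emph{sets} $A$ (classes with no $-1$ element above $Q/2$) and $B$ (classes with no $+1$ element below $Q/2+m/2$), shows for every $j$ that at least one of $j,\,Q-j$ lies in $A$ (otherwise the descending chain of two-term sums of $-1$'s hits $Q+m$, violating (A3)), uses the oddness of $m$ to conclude $|A|>m/2$ and hence $A+A=\mathbb{Z}_m$, and finishes with a single arithmetic progression of sums; $B$ is symmetric. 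That argument never isolates a single extremal residue, which is exactly why it is robust at the boundary where yours is delicate. Two steps in your write-up are not correct as stated.

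First, the paired upper bound. Applying (A3) to $(u_r-m,\,Q+2m-u_r)$ requires $Q+2m-u_r\le Q-n$, i.e.\ $u_r\ge n+2m$, and $u_r-m\ge n$, i.e.\ $u_r\ge n+m$. The parenthetical claim that $Q\ge 2n+4m$ alone keeps the pair in $[n,Q-n]$ is false: it is a condition on $u_r$, not on $Q$. When $u_r<n+2m$ one needs a separate argument (e.g.\ the lower bound already forces $u_{(Q-r)\bmod m}>Q-n-m$, and then a congruence count delivers the upper bound), and when $u_r$ is phantom the pair is not even in class $r$ under your convention.

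Second, the phantom convention itself. Setting $u_r:=Q-n+m$ for a class with no $+1$ element is a constant independent of $r$, so it is generally \emph{not} $\equiv r\pmod m$. Consequently $\epsilon(r)$ need not be $\equiv\alpha\pmod m$ when either threshold involved is phantom, which is precisely what the pigeonhole needs: with bad residues $\epsilon(r^{-})\le\alpha-m$ no longer forces $u_{r^{-}}+u_{(c-r^{-})\bmod m}\le Q+m+\alpha$, and $a=u_{r^{-}}+km$ need not lie in class $r^{-}$. A residue-respecting fix (take $u_r$ to be $\max$ of class $r$ in $[n,Q-n]$ plus $m$) repairs the congruence, but then the bound in the previous paragraph must be reproved, and one must still rule out the pigeonhole handing back an $r^{-}$ whose class is entirely $-1$. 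That is exactly the case analysis you declared out of scope, and it is not merely routine: for example with $\alpha=1$ the forced $\epsilon$-values can leave a unique candidate $r^{-}$, and it can be phantom.

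Finally, your remark that oddness of $m$ ``keeps the reflections $a\mapsto Q-a$ and $a\mapsto Q+m-a$ consistent on residue classes'' does no work, since both induce the same map $r\mapsto Q-r$ on $\mathbb{Z}_m$ for every $m$. In the paper oddness enters at one sharp point (upgrading ``one of $j,\,Q-j\in A$'' to $|A|>m/2$). Your sketch never identifies a comparable pinch point, which is a warning sign that the boundary analysis you defer is where $m$ odd would have to be used if it is needed by your method.
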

\begin{proof}
For each $j \in \mathbb{N}$, consider the arithmetic sequence $\mathbf{a}^j$ of difference $m$ containing all numbers in $[n,Q-n]$ which are congruent to $j$ modulo $m$. Then it is easy to see that 
$\min(\mathbf{a}^{j}) \leq n+m$ and $\max(\mathbf{a}^{j}) \geq Q-n-m$.

Assume $Q\equiv j_*$ (mod $m$) for some $j_*\in \mathbb{Z}_m$. Let $A$ be the set of those $j\in \mathbb{Z}_{m}$ for which the sequence $\mathbf{a}^j$ does not contain any number $x$ of color $-1$ satisfying $x> Q/2$.
Similarly, let $B$ be the set of those $j\in \mathbb{Z}_m$ for which the sequence $\mathbf{a}^j$ does not contain any number $x$ of color $+1$ satisfying $x< Q/2+m/2$.

We now claim that $|A|> m/2$.
To show this, suppose that neither $j$ nor $j_*-j$ is in $A$ for some $j\in \mathbb{Z}_m$.
Then $\mathbf{a}^j$ contains a number $b > Q/2$ of color $-1$ and $\mathbf{a}^{j_*-j}$ contains a number $b'> Q/2$ of color $-1$.
Then we have $b+b'> Q$ and $b+b'\equiv j_*\equiv Q$ (mod $m$), so $b+b'\geq Q+m$.
On the other hand, by \ref{A1}, any number in $\mathbf{a}^j$ smaller than or equal to $b$ has color $-1$ and any number in $\mathbf{a}^{j_*-j}$ smaller than or equal to $b'$ has color $-1$. Hence the sums $b+b', (b-m)+b', b+ (b'-m),\dots, \min(\mathbf{a}^j) + \min(\mathbf{a}^{j_*-j})$ yield all numbers in $b+b', b+b'-m,\dots ,2n+2m+j_*$ and this contains $Q+m$ because $2n+2m+j_* \leq Q +m \leq b+b'$. This  contradicts \ref{A3}.
Hence, at least one of $j$ and $j_*-j$ must belong to $A$. As $m$ is an odd number, this implies $|A|>m/2$.

Furthermore, we also claim that $|B|>m/2$.
Similarly as before, suppose that neither $j$ nor $j_*-j$ is in $B$ for some $j\in \mathbb{Z}_m$.
Then $\mathbf{a}^j$ contains a number $b < Q/2+m/2$ of color $+1$ and $\mathbf{a}^{j_*-j}$ contains a number $b'< Q/2+m/2$ of color $+1$.
Thus $b+b'< Q+m$ and $b+b'\equiv j_*\equiv Q$ (mod $m$), thus $b+b'\leq Q$.
On the other hand, by \ref{A1}, any number in $\mathbf{a}^j$ bigger than or equal to $b$ has color $+1$ and any number in $\mathbf{a}^{j_*-j}$ bigger than or equal to $b'$ has color $+1$. Hence, the sums $b+b', (b+m)+b', b+ (b'+m),\dots, \max(\mathbf{a}^j) + \max(\mathbf{a}^{j_*-j})$ yield all numbers in $b+b', b+b'+m, \dots, 2Q-3m+j_*$ and this contains $Q$ because $b+b'\leq Q \leq 2Q-2n-3m$. This contradicts \ref{A2}.
Hence at least one of $j$ and $j_*-j$ must belong to $B$. As $m$ is an odd number, this implies $|B|>m/2$.

As both $A$ and $B$ contains more than $m/2$ numbers in $\mathbb{Z}_m$, it is easy to see the following
$$ A+A = \mathbb{Z}_m = B+B.$$
Hence, for any number $x\in [Q+m+1, 2Q-2n-2m]$, there exist $j,j'\in A$ with $x \equiv j+j'$ (mod $m$).
Let $b$ be the smallest number of color $+1$ in $\mathbf{a}^j$ and $b'$ be the smallest number of color $+1$ in $\mathbf{a}^{j'}$.
As $j,j'\in A$, those numbers $b$ and $b'$ exist and satisfy $b\leq Q/2 +m$ and $b'\leq Q/2+m$. 
Thus $b+b'\leq Q+2m$, and $b+b'\equiv x$ (mod $m$).
This implies that the sums $b+b', (b+m)+b', b+ (b'+m),\dots, \max(\mathbf{a}^j) + \max(\mathbf{a}^{j'})$ yield all numbers in $[Q+m+1,2Q-2n-2m]$ which are congruent to $x$ modulo $m$. Here, we used the fact that $ \max(\mathbf{a}^j) + \max(\mathbf{a}^{j'}) \geq 2Q-2n-2m$.
As this contains $x$, \ref{A1} implies that the number $x$ can be written as a sum of two numbers of color $+1$.

Similarly, for any number $y\in [2n+2m, Q-1]$, there exist $j,j'\in B$ with $y\equiv j+j'$ (mod $m$).
Let $b$ be the largest number of color $-1$ in $\mathbf{a}^j$ and $b'$ be the largest number of color $-1$ in $\mathbf{a}^{j'}$.
As $j,j'\in B$, those numbers $b$ and $b'$ exist and satisfy $b\geq Q/2-m/2$ and $b'\geq Q/2 - m/2$. Thus $b+b' \geq Q-m$ and $b+b'\equiv y$ (mod $m$).
This implies that  the sums $b+b', (b-m)+b', b+ (b'-m),\dots, \min(\mathbf{a}^j) + \min(\mathbf{a}^{j'})$ yield all numbers in $[2n+2m,Q-1]$ which are congruent to $y$ modulo $m$. Here, we used the fact that $ \min(\mathbf{a}^j) + \min(\mathbf{a}^{j'}) \leq 2n+2m$.
Thus this contains $y$, \ref{A1} implies that the number $y$ can be written as a sum of two numbers of color $-1$.
\end{proof}

Consider the following \emph{grid digraph} $G(n)$ with the vertex set $[0,n]\times [0,n]$ and the arc set consisting of the directed edges from $(i,j)$ to $(i,j+1)$ and from $(i,j)$ to $(i+1,j)$. Furthermore,  let $G(n;m)$ be the induced subgraph of $G(n)$ induced by the set of all the vertices $(i,j) \in [0,n]\times [0,n]$ with $|i-j|\leq m$. 
We call $G(n;m)$ a \emph{reduced grid digraph} and we write $L_s$ to denote the set of all vertices $(i,j)$ in $G(n;m)$ with $i+j=s$, and call this set \emph{level $s$} of the graph $G(n;m)$. Note that $|L_s|\leq m+1$ in $G(n;m)$ for any $s\in [0,2n]$.
We write $L(s,t)= \bigcup_{i\in [s,t]} L_i$ for the set of all vertices of level $j$ for all $s\leq j\leq t$. Then the following lemma says that if a vertex set $Z$ is small and `thin', then we can find a path from $(0,0)$ to $(n,n)$ avoiding the vertices of $Z$.

\begin{lemma}\label{lem: grid}
    Suppose $0<1/n\ll \eta \ll 1/\ell \ll 1$ and 
    let $m= \eta^2 n$ be an integer.
    Let $Z\subseteq G(n;m)$ be a set of vertices satisfying the following.
\begin{enumerate}[label={\rm (G\arabic*)}]
\item \label{G1}\label{G1} $|Z|\leq \frac{1}{10}m$.
\item \label{G2}\label{G2} For each $i\leq \frac{m}{\ell}$, both sets $L(0,i\ell)$ and $L(2n-i\ell, 2n)$ contains at most $(i-1)$ vertices of $Z$.
\end{enumerate}
    Then $G(n;m)$ contains a directed path $P$ from the vertex $(0,0)$ to the vertex $(n,n)$ which does not intersect $Z$.
\end{lemma}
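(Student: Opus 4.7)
Proof plan: For each level $s$, let $R_s \subseteq L_s$ denote the set of vertices reachable from $(0,0)$ in $G(n;m) \setminus Z$, and let $R'_s \subseteq L_s$ denote the symmetric set of vertices from which $(n,n)$ is reachable. If I can show $R_n \cap R'_n \neq \emptyset$, then any vertex $v$ in the intersection yields the required path by concatenating a $(0,0) \to v$ path with a $v \to (n,n)$ path. Identifying $(i,j)$ with its $d$-coordinate $d = i - j$, each $R_s$ is a subset of $d$-values of parity $s$ inside $[-\min(s, 2n-s, m), \min(s, 2n-s, m)]$, evolving via $R_s = \bigl(\{d \pm 1 : d \in R_{s-1}\}\bigr) \setminus \tilde Z_s$, where $\tilde Z_s$ denotes the set of $d$-values of $Z \cap L_s$.

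The technical core is a \emph{healing observation}: when $R_{s-1}$ is written as a union of arithmetic progressions of common difference $2$ (``intervals'') at parity $s-1$, each such interval grows by one element in the forward image, while a gap of $g$ missing elements between two intervals shrinks to $g-1$ at parity $s$; in particular, a single interior hole closes within one forward step. I split the analysis into three regimes. (i) For $s \leq \ell$, \ref{G2} with $i=1$ gives $Z \cap L(0,\ell) = \emptyset$, so $R_s = L_s$. (ii) For $s \in [\ell, m]$ (triangular regime, $|L_s| = s+1$), interior $Z$-vertices heal within $O(1)$ levels, while a $Z$-vertex on the diagonal boundary $d = \pm s'$ creates a hole that propagates along the diagonal until it exits the band at level $m$, and all such propagating holes collapse onto at most two positions ($d = \pm m$) in $L_m$. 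Combined with the \ref{G2} bound $|Z \cap L(0,m)| \leq m/\ell - 1$, this yields $|R_m| \geq |L_m| - O(m/\ell)$. (iii) For $s \in [m, 2n-m]$ (middle regime, $|L_s| \leq m+1$), the boundary holes at $d = \pm m$ in $L_m$ do not lie in $L_{m+1}$ by parity, so the forward image restores the full level; thereafter each $Z$-vertex at a middle level creates only an interior hole that heals in one step, giving $|R_s| \geq |L_s| - |Z \cap L_s|$ throughout. In particular, $|R_n| \geq (m+1) - |Z| \geq 9m/10$ by \ref{G1}.

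The symmetric argument applied to $R'_s$ using the second half of \ref{G2} gives $|R'_n| \geq 9m/10$ as well. Since $|L_n| \leq m+1$, the pigeonhole inequality $|R_n| + |R'_n| > |L_n|$ forces $R_n \cap R'_n \neq \emptyset$, completing the proof. The main technical obstacle is verifying the healing dynamics in the triangular regime: multiple $Z$-vertices appearing on nearby levels could in principle compound the damage and inflate the number of holes at $L_m$. The hierarchy $1/\ell \ll 1$, together with \ref{G2}'s spacing condition (which forces the $i$-th $Z$-vertex by level to lie above $L_{i\ell}$), ensures that $Z$-vertices are separated by at least $\ell$ levels---far more than the $O(1)$ healing time of a single isolated hole---so the damage at $L_m$ accumulates only additively in $|Z \cap L(0,m)|$ rather than multiplicatively, which is what makes the $O(m/\ell)$ bound in regime (ii) go through.
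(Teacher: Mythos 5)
Your plan is the same meet-in-the-middle reachability argument as the paper's: track the set $R_s$ of vertices reachable from $(0,0)$ in $G(n;m)\setminus Z$ level by level, do the same backward from $(n,n)$, and use pigeonhole at $L_n$. The conclusion $|R_n|\ge |L_n|-|Z|$ is what the paper effectively establishes. However, the technical core you propose (tracking the reachable set as a union of intervals with gaps that ``heal'') contains two steps that fail.

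First, (G2) does \emph{not} ensure that $Z$-vertices are separated by at least $\ell$ levels. It only forces the $i$-th $Z$-vertex (ordered by level) above level $i\ell$; if the first few $Z$-vertices happen to lie at much higher levels than required, the next ones may be at consecutive levels or even at a single level. For instance, having all $m/\ell - 1$ allowed $Z$-vertices at level $m$ is consistent with (G2). So the claim that the ``damage'' is always sparse enough to heal between hits is unjustified. Second, the regime~(iii) claim $|R_s|\geq |L_s| - |Z\cap L_s|$ is false. Place two $Z$-vertices at level $m$ at $d$-values $0$ and $2$ (consistent with (G1)--(G2) for $m\ge 3\ell$). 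Then $R_m$ has a gap of size two, so $N^+(R_m)$ misses $d=1$ and $|R_{m+1}| = |L_{m+1}| - 1$ even though $Z\cap L_{m+1}=\emptyset$. A gap of size $g$ heals one element per level, not instantaneously, so the per-level bound cannot hold.

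Both issues are avoidable by dropping the structural ``healing'' viewpoint in favor of the paper's cruder size-only expansion lemma: for nonempty $A\subseteq L_s$, $|N^+(A)|\ge |A|+1$ below the band and alternately $|A|\pm 1$ inside it. This gives the cumulative deficit inequality $|L_{s+1}|-|R_{s+1}|\le (|L_s|-|R_s|)+|Z_{s+1}|$ in every regime, hence directly $|R_n|\ge |L_n|-|Z|\ge \tfrac{9}{10}m$ (using (G2) only to keep $R_s$ nonempty during the early triangular levels). Your final pigeonhole is then immediate. In short: your counting is right, but the $\ell$-separation and per-level gap-closing justifications need to be replaced by the paper's expansion argument, which is both simpler and actually correct.
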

\begin{proof}
We first prove the following claim.
\begin{claim}\label{cl: expands}
For $i\in [2n]$ and a nonempty set $A\subseteq L_i$, its out-neighborhood $N^+(A)$ in the graph $G(n;m)$ satisfies the following.
\begin{enumerate}
    \item[(a)] If $i< m$, then $|N^+(A)|\geq |A|+1$.
    \item[(b)] If $m\leq i\leq n-1$ and $i$ has different parity from $m$, then $|N^+(A)|\geq |A|+1$.
    \item[(c)] If $m\leq i\leq n-1$ and $i$ has the same parity as $m$, then $|N^+(A)|\geq |A|-1$.
\end{enumerate}
\end{claim}
\begin{proof}
    If $i<m$ or $i$ has different parity from $m$, then we consider 
    $$A_0 = \{ (i+1,j): (i,j)\in A\} \enspace \text{and}\enspace A_1 = \{(i,j+1):(i,j)\in A\}.$$
    For such a number $i$, it is easy to see that these two sets have the same size as $A$ and they are not identical, we have $|N^+(A)|= |A_0\cup A_1| \geq |A|+1$, yielding (a) and (b).

    If $m\leq i\leq n-1$ and $i$ has the same parity as $m$, then $A_0$ has size at least $|A|-1$, so (c) holds.
\end{proof}
    For each $i\in [2n]$, let $Z_i = Z\cap L_i$ be the set of vertices in $Z$ which belong to the $i$-th level. 
Consider the graph $G'= G(n;m)- Z$. 
    For each $i\in [m]$, let $S_i$ be the set of vertices $v$ at level $i$ such that $G'$ contains a directed path from $(0,0)$ to $v$.
Note that for $S\subseteq (L_i\setminus Z)$, we have 
$$N^+_{G'}(S) = N^+_{G(n;m)}(S)\setminus Z_{i+1}.$$
Note that \ref{G2} with $i=1$ implies that $L(0,\ell)$ does not contain any vertices of $Z$. Thus we have $|S_\ell|= \ell+1$. For $t\in [m-\ell]$, we can apply Claim~\ref{cl: expands}~(a) $t$ times to obtain the sizes of the $t$-th out-neighborhood of $S_\ell$ in the digraph $G'= G(n;m)- B$ to
conclude 
$$|S_{\ell+t}| \geq \ell+t - \sum_{i\in [t] }|Z_{\ell+i}| \geq \ell+t -\frac{t}{\ell}
\geq \ell + (1-\frac{1}{\ell})t.$$
Indeed, this shows that the set $S_{\ell+t}$ never become an empty set for $t\in [m-\ell]$, thus we can again apply Claim~\ref{cl: expands} to obtain the inequality for $t+1$ and so on.
From this, we conclude that $|S_{m}|\geq \ell + (1-\frac{1}{\ell})m \geq \frac{4}{5} m$.

Again, we apply Claim~\ref{cl: expands}~(b) and (c) repeatedly to the vertex set $S_{m}$ to obtain the size of the $t$-th out-neighborhood $S_{m+t}$ of $S_{m}$ in $G'$ for $t\in [n-m]$. As $|Z|\leq \frac{1}{10} m$, we have
$$|S_{n}| \geq \frac{4}{5}m -1 - |Z| \geq \frac{2}{3}m.$$

Similarly, we consider paths towards the vertex $(n,n)$. A symmetric argument again gives that there are at least $\frac{2}{3}m$ vertices $v$ in $S_n$ which admit a path from $v$ to $(n,n)$ in $G'$. As $|L_n|\leq m+1 < \frac{2}{3}m+ \frac{2}{3}m$, there exists a vertex $v\in L_n$ such that $G'$ admits a path from $(0,0)$ to $v$ and from $v$ to $(n,n)$. These two paths together yield a desired path.
\end{proof}

Finally, we need the following inequality.

\begin{lemma}\label{lem: inequality}
For $m,n\in \mathbb{N}$,
    let $x_1,\dots, x_n$ be a sequence satisfying $\sum_{j\in [i]} j x_j \leq (i-1)m$ for all $i\in [n]$.
    Then we have $\sum_{j\in [n]} x_j \leq 2m(1+\log n)$.
\end{lemma}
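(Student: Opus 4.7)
The plan is to prove this by partial summation (Abel summation), converting the weighted partial sums $T_i := \sum_{j\in[i]} jx_j$, which the hypothesis controls, into the unweighted sum $\sum_j x_j$, which we want to bound. Implicit in the statement (as will be the case in the intended application) is that $x_j \geq 0$; otherwise the bound is meaningless, and the $i=1$ case of the hypothesis immediately forces $x_1 = 0$.

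First I would rewrite
\[
\sum_{j=1}^n x_j \;=\; \sum_{j=1}^n \frac{jx_j}{j} \;=\; \sum_{j=1}^n \frac{T_j - T_{j-1}}{j},
\]
with the convention $T_0=0$. Reindexing the telescoping sum in the standard way yields
\[
\sum_{j=1}^n x_j \;=\; \frac{T_n}{n} + \sum_{j=1}^{n-1} T_j\!\left(\frac{1}{j}-\frac{1}{j+1}\right) \;=\; \frac{T_n}{n} + \sum_{j=1}^{n-1} \frac{T_j}{j(j+1)}.
\]

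Next I would plug in the hypothesis $T_i \leq (i-1)m$. Each term on the right then satisfies $T_n/n \leq m$ and $T_j/(j(j+1)) \leq (j-1)m/(j(j+1)) \leq m/(j+1)$, so
\[
\sum_{j=1}^n x_j \;\leq\; m + m\sum_{j=1}^{n-1}\frac{1}{j+1} \;=\; m\sum_{j=1}^{n}\frac{1}{j} \;=\; mH_n.
\]
Finally, using the elementary bound $H_n \leq 1+\ln n \leq 1+\log n$ (the paper's logarithms being base $2$, with $\ln n \leq \log_2 n$), one concludes $\sum_{j\in[n]} x_j \leq m(1+\log n) \leq 2m(1+\log n)$, as required.

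There is no real obstacle here: once one spots that partial summation turns $\{T_i\}$-information into $\{S_i\}$-information, the bound falls out immediately, and the factor of $2$ in the conclusion is in fact quite generous compared with what the argument actually yields. The only point requiring a moment's care is the decision to bundle the boundary term $T_n/n$ together with the telescoping sum so that the combined bound is exactly the harmonic number $mH_n$, cleanly producing the logarithmic factor.
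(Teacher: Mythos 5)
Your proof is correct, and it takes a genuinely different route from the paper. The paper argues by contradiction using a dyadic decomposition: if $\sum_j x_j > 2m(1+\log n)$, then by averaging over the $\leq 1+\log n$ blocks $I_i = [2^i,2^{i+1}]$, some block satisfies $\sum_{j\in I_i} x_j > 2m$, whence $\sum_{j\leq 2^{i+1}} jx_j \geq 2^i\sum_{j\in I_i}x_j > 2^{i+1}m$, contradicting the hypothesis at $i'=2^{i+1}$. Your approach instead does a direct Abel summation, converting differences of the controlled partial sums $T_i$ into the target sum; the computation $\sum_j x_j = T_n/n + \sum_{j<n} T_j/(j(j+1))$ is correct, and bounding term by term gives $mH_n \leq m(1+\log n)$, which is tighter than the stated $2m(1+\log n)$. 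One further remark: since every coefficient of $T_j$ in your Abel identity is positive and you only use the \emph{upper} bound $T_j\leq (j-1)m$, your argument in fact does not require $x_j\geq 0$ at all, so your opening hedge about implicit non-negativity is unnecessary. The paper's dyadic argument, by contrast, genuinely uses $x_j\geq 0$ (both to drop terms with $j<2^i$ and to pass from $jx_j$ to $2^ix_j$), so your proof is marginally more general as well as quantitatively sharper.
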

\begin{proof}
    Suppose $\sum_{j\in [n]} x_j > 2m(1+\log n)$.
    For each  $0\leq i\leq  \log n$ consider the intervals $I_i = [2^{i},2^{i+1}]$.
    By averaging, there exists $0 \leq i\leq \log n$ satisfying $\sum_{j\in I_i}x_j > 2m.$
    However, this implies
    $$\sum_{j\leq 2^{i+1}} j x_j 
    \geq \sum_{j\in I_i} 2^{i} x_j > 2^{i+1} m,$$
    which contradicts the assumption of the lemma. This proves the lemma.
\end{proof}

\section{Counting monochromatic solutions}

Throughout this section, we assume that a non-odd polynomial $p(z)$ of degree $d\geq 2$ with a positive leading coefficient and
a $2$-coloring $\phi: \mathbb{N}\rightarrow \{-1,1\}$ are given. We also assume that $k$ is a positive switch and the numbers $\eta, \eps\in (0,1)$ and $q,\ell \in \mathbb{N}$ are chosen according to the following hierarchy:
 \begin{align}\label{eq: hierarchy} 
 0< \frac{1}{k} \ll \eta \ll \eps \ll \frac{1}{\ell} \ll \frac{1}{q} \ll \frac{1}{\|p\|}, \quad \frac{1}{d} \leq \frac{1}{2}.
 \end{align}

As $k$ is sufficiently large, we have that $m_s(k)>0$ for all $s\geq 1$. Recall that we have defined $(k,s)$-bad pair $\{a,b\}$ and Lemma~\ref{lem: bad} yields that many pairwise disjoint bad pairs provide many monochromatic solutions.

Recall that we have defined two numbers $k_1$ and $k_0$, and we further define $k_2$ as follows.
$$k_2:= \frac{1}{q} k, \enspace k_1 := \max\{ t: p(t)< \frac{1}{2}p(k))\} \enspace \text{and} \enspace k_{0} := \max\{ t: p(t) < 2p(k) - 4m_1(k)\}.$$ As $1/k \ll \eps$, we can estimate that $k_1 = (2^{-1/d} \pm \eps)k$ and $k_{0} = (2^{1/d} \pm \eps) k$.
As $k$ is sufficiently large, we may assume that $p(z)$ is a strictly increasing function on $[k_2,\infty)$.

The following lemma allows us to deal with the case where $k$ is a non-isolated switch.
\begin{lemma}[\cite{LPS}]\label{lem: non isolated}
If $k$ is a non-isolated switch, then the interval $[p(k)]$ contains at least $k^{1-o(1)}$ distinct monochromatic solutions.
\end{lemma}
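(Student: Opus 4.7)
My plan is to exploit the existence of a second $+1$-colored integer $w\in [k+1,k_0]$ (guaranteed by non-isolation) together with the switch at $k$ to produce many monochromatic solutions. Set $s:=w-k\in [1,k_0-k]$, so that $p(w)=p(k)+m_s(k)$. We then have $\phi(k)=\phi(w)=+1$ while $\phi(k+1)=-1$, i.e.\ two $+1$-colored pivots and one nearby $-1$-colored pivot, all with $p$-values comparable to $p(k)$.

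I would first try a direct count using $z=k$ and $z=w$. For every $a\in \Phi^+\cap [1,p(k)/2]$, the triple $(a,p(k)-a,k)$ is a monochromatic $(+1)$-solution whenever $\phi(p(k)-a)=+1$, and $(a,p(w)-a,w)$ is a monochromatic $(+1)$-solution whenever $\phi(p(w)-a)=+1$. If at least $k^{1-o(1)}$ values of $a$ produce at least one such solution, we are done. Otherwise, for most such $a$ both equalities fail, yielding $\phi(p(k)-a)=\phi(p(w)-a)=-1$. Writing $b:=p(k)-a$, we obtain many $-1$-colored pairs of the form $\{b,b+m_s(k)\}$, i.e., a reservoir of negative pairs at a fixed, controlled difference.

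The second step is to convert these negative pairs into monochromatic $(-1)$-solutions (or into bad pairs in the sense of the paper). Using that $\phi(k+1)=-1$, the candidate triple $(b,p(k+1)-b,k+1)$ equals $(b,a+m_1(k),k+1)$: if $\phi(a+m_1(k))=-1$ then we obtain a monochromatic $(-1)$-solution, while if $\phi(a+m_1(k))=+1$ we can iterate, treating $a+m_1(k)$ as a new $+1$-colored input and repeating Step~1 after a small shift by $m_1(k)$. This iteration exhausts $a$ in chains of length $O(\log k)$ at worst, each step producing either a solution or advancing the chain.

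The main obstacle, and the likely source of the $o(1)$ loss in the exponent, is guaranteeing that the monochromatic solutions harvested across different starting values of $a$ (and different iterations) are pairwise distinct. I would appeal to the independence property of Proposition~\ref{prop: t terms independent}, which prevents two integral combinations of $m_1(k),\dots,m_{k_0-k}(k)$ from accidentally coinciding, together with Lemma~\ref{lem: bad} to cash in any bad pairs produced along the way. Careful accounting for the geometric blow-up of the iteration should absorb the polylogarithmic loss into the $k^{-o(1)}$ factor and deliver the claimed $k^{1-o(1)}$ monochromatic solutions inside $[p(k)]$.
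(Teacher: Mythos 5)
Your opening move — extracting a second $+1$-pivot $w\in[k+1,k_0]$ from non-isolation and trying to harvest solutions with third coordinate in $\{k,w,k+1\}$ — is a natural start, and the identity $p(k+1)-(p(k)-a)=a+m_1(k)$ is the right lever for passing information from the $+1$ side to the $-1$ side. But the core of the argument does not close.

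Your iteration moves $a\mapsto a+m_1(k)$ and, at each step, either produces a solution or records that $\phi(a+im_1(k))=+1$ while $\phi(p(k)-a-im_1(k))=\phi(p(w)-a-im_1(k))=-1$, then advances. Nothing you say forces this chain to stop: it can run for $\Theta(p(k)/m_1(k))=\Theta(k)$ steps and exit the interval $[p(k)]$ without ever yielding a solution. The assertion that chains have length $O(\log k)$ is stated without justification, and I see no reason it should hold; the iteration is additive, not geometric, so there is no ``geometric blow-up'' to exploit. Concretely, a coloring in which, within each residue class modulo $m_1(k)$, the set $\Phi^+$ is an upper interval (so $\phi$ is nondecreasing under $+m_1(k)$) satisfies every ``continue'' condition along every chain, and you have not supplied the extra structural input — from having two $+1$-pivots $k,w$ at a controlled gap, nor from the non-oddness of $p$, which enters nowhere in your argument — that would rule this out. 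As written, the worst branch of your case analysis produces zero solutions.

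Two further points. The pairs $\{b,b+m_s(k)\}$ you extract with both entries in $\Phi^-$ are not bad pairs in the paper's sense (a bad pair requires $\phi$ to drop from $+1$ to $-1$ across the gap $m_s(k)$), so Lemma~\ref{lem: bad} does not act on them; the genuine bad pairs in your setup are $\{a_i,a_{i+1}\}$ when a chain turns from $+1$ to $-1$, and there you are essentially reproving the relevant case of Lemma~\ref{lem: bad} by hand with $z=k+1$. Also, the appeal to Proposition~\ref{prop: t terms independent} is misplaced: your chains only use shifts by a single fixed $m_1(k)$, so solutions coming from distinct residue classes modulo $m_1(k)$, or from distinct $z\in\{k,w,k+1\}$, are automatically distinct; distinctness is not the bottleneck. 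The bottleneck is a termination guarantee, or a quantitative lower bound on the yield across chains, and that is where the proposal is missing its key idea.
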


As we will use this lemma to deal with the case when $k$ is a non-isolated switch,  we will deal with the case where $k$ is an isolated switch in the rest of this section.
Under this assumption, $a\in \Phi^+$ and $b\in \Phi^-$ such that the difference $b-a$ is $m_s(k)$ for some $s< k_0-k$ will yield a bad pair $\{a,b\}$.

\subsection{Utilizing nested pairs of almost monochromatic intervals}
In this subsection, we present a key lemma, Lemma~\ref{lem: many intervals}, which provides many monochromatic solutions utilizing a sequence of nested pairs of almost monochromatic intervals of opposite colors. Let us warm up with a single pair of intervals.

Assume that we have an interval $P$  mostly consisting of numbers from $\Phi^+$ and another interval $N$ mostly consisting of numbers from $\Phi^-$ with $\min(P)< \min(N)$. 
By expressing $\min(N)-\min(P)$ as an integral combination of the numbers $m_{s}(k)$ and considering a sequence as we explained in Section~\ref{sec: proof idea}, we may obtain many monochromatic solutions.
The following lemma proves this.

\begin{lemma}\label{lem: two intervals}
Suppose that $k$ is an isolated switch
and two intervals $P, N \subseteq [1, p(k)-k ]$ satisfy the following.
\begin{enumerate}[label={\rm (P\arabic*)}]
\item \label{P1} $ |P| = |N| = r < m_1(k)$.
\item \label{P2} $ \min(N)- \min(P) \geq \ell^2 k^{d-1/2}$.
\item \label{P3} $|P\cap \Phi^+| + |N \cap \Phi^-| \geq \frac{5}{4}r.$
\end{enumerate}
Then, there exists at least $r/4 - 2\ell$ distinct monochromatic solutions in $[p(k)]$.
\end{lemma}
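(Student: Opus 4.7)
The plan is to invoke Lemma~\ref{lem: bad}, so it suffices to produce at least $r/4-2\ell$ pairwise disjoint $(k,s)$-bad pairs with $s\in[1,k_0-k]$; because $k$ being an isolated switch forces $[k+1,k_0]\subseteq\Phi^-$, every such bad pair produces a monochromatic solution inside $[p(k)]$ (using $P,N\subseteq[1,p(k)-k]$). Set $D:=\min(N)-\min(P)\geq\ell^2 k^{d-1/2}$. Identifying $N$ with $P$ via the shift by $D$ and using \ref{P3},
$$\bigl|\{a\in P\cap\Phi^+ : a+D\in N\cap\Phi^-\}\bigr|\ \geq\ |P\cap\Phi^+|+|N\cap\Phi^-|-r\ \geq\ \tfrac{r}{4};$$
allowing a $\pm\ell$ slack in the target (to absorb the error from the estimation below) still leaves $\geq r/4-2\ell$ \emph{aligned} starting points $a$.

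Next I construct a non-negative integer vector $\mathbf v$ supported on $[1,k_0-k]$ with $\mathbf v\cdot\mathbf m^k=D\pm\ell$. Because $D$ may be as large as $p(k)\sim k^d$, Lemma~\ref{lem: estimation} is not directly applicable. The plan is a two-stage decomposition: greedily pick the largest index $s_*\leq k_0-k$ and an integer multiplicity $v_*\geq 1$ so that the residue $R:=D-v_*\,m_{s_*}(k)$ is non-negative and at most $qk^{d-1}$; then apply Lemma~\ref{lem: estimation} to $R$ to obtain a vector $\mathbf v'\in[-\ell,\ell]^{k_0-k}$ supported on the sparse dyadic set $T=\{t\cdot 2^j : t\in[d-1],\,1\leq j\leq q+\tfrac12\log k\}$ with $\mathbf v'\cdot\mathbf m^k=R\pm\ell$. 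I convert $\mathbf v'$ into a non-negative vector by shifting it up by $\ell$ on $T$ and subtracting a compensating amount from $v_*$; the hypothesis $D\geq\ell^2 k^{d-1/2}$ comfortably dominates $\ell\sum_{i\in T}m_i(k)$, so the corrected $v_*$ stays positive and $\mathbf v$ is non-negative.

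Listing the indices of $\mathbf v$ as a multiset yields a chain $0=c_0<c_1<\dots<c_t=\mathbf v\cdot\mathbf m^k$ with each $c_{i+1}-c_i=m_{s^{(i+1)}}(k)$. For each aligned $a$, the sequence $a+c_0,\dots,a+c_t$ begins with $\phi$-value $+1$ and ends with $-1$, so the first transition index $j(a)$ yields a $(k,s^{(j(a)+1)})$-bad pair $\{a+c_{j(a)},a+c_{j(a)+1}\}$.

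The main obstacle is showing these bad pairs are pairwise disjoint. A shared element between the pairs for $a\neq a'$ would force $|a-a'|<r<m_1(k)$ to equal a non-trivial signed combination $\sum_i\epsilon_i\,m_{s^{(i)}}(k)$ with $|\epsilon_i|$ bounded by entries of $\mathbf v$. To preclude this I will exploit the sparse structure $\mathrm{supp}(\mathbf v)=\{s_*\}\cup T$: a nonzero coefficient at $s_*$ already forces $|a-a'|\geq m_{s_*}(k)-O(\ell\,|T|\,k^{d-1/2})\gg r$, and for combinations supported only inside $T$, Proposition~\ref{prop: t terms independent} (applied within each dyadic stratum of $T$, where at most $d-1$ indices appear and the coefficients are bounded by $O(\ell)\ll k/q$) forces the combination to have absolute value at least $\Omega(k)\gg r$. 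This yields the disjointness of the bad pairs, and Lemma~\ref{lem: bad} then produces at least $r/4-2\ell$ distinct monochromatic solutions in $[p(k)]$.
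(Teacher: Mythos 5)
Your counting and overall strategy (produce many disjoint bad pairs via a monotone chain of $m_s(k)$-steps from $P$ to $N$, then invoke Lemma~\ref{lem: bad}) match the paper, but the non-negativity repair of your decomposition has a real gap. You apply Lemma~\ref{lem: estimation} to $R = D - v_* m_{s_*}(k)$ to get $\mathbf v'$, then propose to shift by $\ell\mathbf 1_T$ and ``subtract a compensating amount from $v_*$.'' Adding $\ell\mathbf 1_T$ increases the inner product by $M':=\ell\sum_{i\in T}m_i(k)\approx\ell^2 k^{d-1/2}$, which is far larger than the allowed slack $\ell$, and $M'$ is not an integer multiple of $m_{s_*}(k)$, so no integer adjustment of $v_*$ can cancel it to within $\pm\ell$; even the best choice leaves an error on the order of $m_{s_*}(k)\gg\ell$. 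The paper sidesteps this by subtracting the padding \emph{before} invoking Lemma~\ref{lem: estimation}: it sets $M:=\sum_{i\in I}\ell\,m_i(k)$, chooses $s$ maximal with $s\,m_1(k)\le T-M$, applies Lemma~\ref{lem: estimation} to $T-M-sm_1(k)\le q k^{d-1}$, and then $\mathbf v + \ell\mathbf 1_I$ is automatically non-negative with $\mathbf m^k\cdot(\mathbf v+\ell\mathbf 1_I)+s\,m_1(k)=T\pm\ell$. Your version needs the same reordering (compute $R=D-M'-v_*m_{s_*}(k)$ with $v_*$ chosen so $R\in[0,qk^{d-1}]$).

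The disjointness argument is also over-engineered and leans on Proposition~\ref{prop: t terms independent} in a setting where it does not literally apply: that proposition concerns the specific indices $m_1(k),\dots,m_{d-1}(k)$, not a general dyadic stratum $\{2^j,2\cdot 2^j,\dots,(d-1)2^j\}$. More importantly, none of this is needed. A shared element between the bad pair for $a$ and the bad pair for $a'$ forces $a-a'$ to equal a difference of two partial sums of the $w_i$'s, i.e.\ $\pm$ a sum of a nonempty consecutive block of $w_i$'s. Since every $w_i$ equals some $m_s(k)\ge m_1(k)$ and $|a-a'|<r<m_1(k)$ by \ref{P1}, any nonempty block sum already exceeds $|a-a'|$. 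That one-line observation is the paper's whole disjointness argument, and it replaces your two-case analysis entirely.
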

\begin{proof}
As $k$ is an isolated switch, every pair $\{a,b\} \subseteq [p(k)]$ with $b=a+m_{s}(k)$, $s\in [k_0-k]$ and $\phi(a) > \phi(b)$ forms a bad pair. 
Let $T=\min(N)-\min(P)$.
Let $I:= \{ t 2^j :t\in [d-1], 1\leq j\leq q+ \frac{1}{2}\log{k}\}$ and 
$$M :=\sum_{i\in I } \ell \cdot m_{i}(k).$$
As it is easy to see $m_{i}(k)\leq 2^{2q} k^{d-1}$ for each $i\in I$ from \eqref{eq: mik},
this choice of $M$ yields $M < \ell^2 k^{d-1/2}.$
Let $s$ be the maximum non-negative integer such that 
$$s  m_1(k)  \leq T-M.$$
The maximality of $s$ ensures $T-M - s m_1(k) \leq m_1(k) \leq q k^{d-1}$. 
Hence, Proposition~\ref{lem: estimation} together with \eqref{eq: hierarchy} 
implies that there exists a vector $\mathbf{v}\in [-\ell,\ell]^{k}$ such that $\mathbf{m}^{k} \cdot \mathbf{v} = T-M -s m_1(k)\pm \ell$. This yields
\begin{align}\label{eq: target 1}
\mathbf{m}^{k} \cdot (\mathbf{v} + \ell \mathbf{1}_I ) +s m_1(k)= T  \pm \ell.
\end{align}
Note that we have $ \mathbf{v} + \ell \mathbf{1}_{I } \in [2\ell]^{k}$.
Let $w_1,\dots, w_{t}$ be an arbitrary sequence such that 
for each $i\in I$, the term $m_{i}(k)$ occurs exactly $\mathbf{v}_{i} + \ell$ times in the sequence and $m_1(k)$ occurs exactly $s$ times. Note that $1\notin I$.

For each $x\in P$ and $j\in [0,t]$, let 
$$u_j(x):= x+ \sum_{i\in [j]} w_{i}.$$
Together with \ref{P1} and \eqref{eq: target 1}, we conclude that
for all numbers $x\in [\min(P)+\ell, \max(P)-\ell] \cap \Phi^{+}$,
\begin{align}
u_t(x) \in N. 
\end{align}
However, \ref{P3} implies that there are at least $r/4 - 2\ell$ choices of such $x$ satisfying that $\phi(x)=+1$ and
$\phi(u_{t}(x) ) = -1$.
This means that 
the sequence $u_0(x),u_1(x),\dots, u_{t}(x)$ contains a bad pair. As two distinct choices of $x,x'$ gives us pairwise disjoint sequences, we can choose at least $r/4-2\ell$ pairwise disjoint bad pairs, hence Lemma~\ref{lem: bad} implies that there are at least $r/4- 2\ell$ distinct monochromatic solutions.
\end{proof}

We can use Lemma~\ref{lem: two intervals} to derive the following lemma for the case of $d=2$.

\begin{lemma}\label{lem: d=2}
Suppose that $d=2$ and $k$ is an isolated switch. Then $[p(k)]$ contains at least $\eps^2 k$ distinct monochromatic solutions.
\end{lemma}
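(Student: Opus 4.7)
The plan is to apply Lemma~\ref{lem: two intervals} to a suitable pair of intervals $P, N$, chosen differently depending on one of two cases. Set $r := \lceil 5\eps^2 k \rceil$, so that $r/4 - 2\ell \geq \eps^2 k$ (using $\ell \ll \eps^2 k$ from the hierarchy); note also $r < k_0 - k$, so $[k+1, k+r] \subseteq [k+1, k_0] \subseteq \Phi^-$ by the isolation of $k$. Let $z_* := \lceil \sqrt{(2k+r)/a_2}\rceil = \Theta(\sqrt{k})$ and let $z^{**}$ be the largest integer satisfying $p(z^{**}) \leq \tfrac{1}{2}p(k) - \tfrac{1}{2}\ell^2 k^{3/2}$; standard estimates give $z^{**} = k_1 - \Theta(\ell^2\sqrt{k})$ and $z^{**} - z_* = \Theta(k)$.

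In Case~1, suppose $[z_*, z^{**}] \cap \Phi^- \neq \emptyset$ and fix $z_0$ in this intersection. Define $P := p(z_0) - [k+1, k+r]$ and $N := p(k) - P$. The choices of $z_*, z^{**}$ ensure $P, N \subseteq [1, p(k)-k]$, $|P|=|N|=r$, and $\min(N)-\min(P) = p(k) - 2p(z_0) + 2k + r + 1 \geq \ell^2 k^{3/2}$. Since $[k+1, k+r]\subseteq \Phi^-$ and $z_0 \in \Phi^-$, if $|P\cap\Phi^-|\geq r/4$ then each $x \in P\cap\Phi^-$ yields a distinct monochromatic $-1$ triple $(x, p(z_0)-x, z_0) \in [p(k)]^{(3)}$, giving $\geq r/4 > \eps^2 k$ solutions. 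Otherwise $|P\cap\Phi^+| > 3r/4$: if additionally $|N\cap\Phi^+|\geq r/2$, then $|N\cap\Phi^+| - |P\cap\Phi^-| > r/4$, so at least $r/4$ elements $x \in N\cap\Phi^+$ satisfy $p(k)-x \in P\cap\Phi^+$, producing that many distinct monochromatic $+1$ triples $(x, p(k)-x, k)$; if instead $|N\cap\Phi^+|<r/2$ then $|N\cap\Phi^-|>r/2$, whence $|P\cap\Phi^+|+|N\cap\Phi^-| > 5r/4$, and Lemma~\ref{lem: two intervals} produces $\geq r/4 - 2\ell \geq \eps^2 k$ solutions.

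In Case~2, suppose $[z_*, z^{**}] \subseteq \Phi^+$. Here I use a symmetric construction: pick $z'_0 \in [z_*, z^{**}]$ with $p(z'_0) \geq \ell^2 k^{3/2} + 2z_* + r$; such $z'_0$ exists in $[z_*, z^{**}]$ because the constraint translates to $z'_0 \gtrsim \ell k^{3/4}$, which is much less than $z^{**} \approx k/\sqrt{2}$ for large $k$. Let $P' := [z_*, z_* + r - 1]$ (so $P' \subseteq \Phi^+$) and $N' := p(z'_0) - P'$. One readily verifies $P', N' \subseteq [1, p(k)-k]$, $|P'|=|N'|=r$, and $\min(N') - \min(P') = p(z'_0) - 2z_* - r + 1 \geq \ell^2 k^{3/2}$. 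Since $|P'\cap\Phi^+| = r$: if $|N'\cap\Phi^-|\geq r/4$ then $|P'\cap\Phi^+| + |N'\cap\Phi^-| \geq 5r/4$, and Lemma~\ref{lem: two intervals} gives $\geq \eps^2 k$ solutions; otherwise $|N'\cap\Phi^+| > 3r/4$, and since $z'_0 \in \Phi^+$ and $p(z'_0)-x \in P' \subseteq \Phi^+$ for every $x \in N'$, each $x \in N'\cap\Phi^+$ yields a distinct monochromatic $+1$ triple $(x, p(z'_0)-x, z'_0) \in [p(k)]^{(3)}$, giving $> 3r/4 > \eps^2 k$ solutions.

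The main technical delicacy I anticipate is tracking the polynomial error terms carefully to verify the distance and containment conditions (especially for the precise definition of $z^{**}$ and for confirming existence of $z'_0$ in Case~2); once these are in place, the two cases reduce to elementary inclusion-exclusion counts combined with the conclusion of Lemma~\ref{lem: two intervals}.
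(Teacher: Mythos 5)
Your proof is correct and follows essentially the same approach as the paper: case split on whether an interval at scale between roughly $\sqrt{k}$ and $k_1$ is entirely $+1$-colored, construct in each case a pair of intervals $P,N$ of opposite predominant colors, and either directly exhibit monochromatic solutions using the isolated switch (or the $+1$-colored interval) or invoke Lemma~\ref{lem: two intervals}. The paper uses $[k_2,(1-\eps)k_1]$ as the split interval with $P=p(x)-[k+1,k_0]$ of length $k_0-k$, whereas you use $[z_*,z^{**}]$ with $z_*=\Theta(\sqrt k)$ and a shorter $P$ of length $r=\lceil 5\eps^2 k\rceil$; this is a cosmetic variation. The one place your route differs slightly is Case~2: the paper centers $N'$ at $p(k)/2$ and finds $+1$-solutions with $z=k$ (both halves of the pair in $N'$), while you set $N'=p(z'_0)-P'$ for an auxiliary $z'_0$ in the assumed-$+1$ interval and pair each $x\in N'\cap\Phi^+$ with $p(z'_0)-x\in P'\subseteq\Phi^+$, taking $z=z'_0$. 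Both exploit the Case~2 hypothesis in the same way; yours is arguably a hair more symmetric with Case~1 but does not change the substance of the argument.
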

\begin{proof}
As $k$ is an isolated switch, all numbers in $[k+1,k_0]$ are colored with $-1$.
We consider the following two cases. \newline

\noindent {\bf Case 1. There exists a number 
$x\in [k_2,(1-\eps) k_1]$ colored with $-1$.} Let $$P := p(x)-[k+1,k_{0}] \text{ and } N:= p(k)-P.$$
As all numbers in $[k+1,k_0]$ has color $-1$, 
if $y\in P=p(x)-[k+1,k_0]$ has the color $-1$, then $(y,p(x)-y, x)$ forms a monochromatic solution. Hence, we may assume that $P$ contains less than $\eps^2 k$ numbers of color $-1$.
In particular, 
$$|P\cap \Phi^{+}| \geq k_0-k - \eps^2 k \geq (1-\eps)|P|.$$
Then we conclude that 
$$|N\cap \Phi^{-}|\geq (1 -2\eps )|N|,$$
as otherwise we obtain at least $\eps |N|\geq \eps^2 k$ monochromatic solutions of the form $(y,p(k)-y,k)$ with $y\in P\cap \Phi^{+}$ and $p(k)-y\in N\cap \Phi^{+}$.  
Now we verify that these intervals $P$ and $N$ satisfy the conditions \ref{P1}--\ref{P3} of Lemma~\ref{lem: two intervals}
As
$P,N$ are two intervals of length $k_{0}-k$ and at least $(1-2\eps)$-fraction of the numbers in $P$ are colored $+1$ and at least $(1-2\eps)$-fraction of the numbers in $N$ are colored $-1$, \ref{P3} holds. Moreover, as $k$ is sufficiently large, the definition of $k_1$ implies 
$$p(x)\leq p((1-\eps)k_1) < (1-\eps)\frac{p(k)}{2},$$
hence 
$$P \subseteq \Big[ \frac{(1-\eps^2)p(k)}{2}\Big] \text{ and } N\subseteq  \Big[\frac{(1+\eps^2)p(k)}{2}, p(k) \Big].$$ 
As $\eps^2 p(k) > \ell^2 k^{d-1/2}$, \ref{P2} also holds.
As $|P|=|N| = k_0-k \leq m_1(k)$, \ref{P1} holds as well. Thus Lemma~\ref{lem: two intervals} implies that there are at least $\eps k$ distinct monochromatic solutions in $[p(k)]$. \newline

\noindent {\bf Case 2. All the numbers in $[k_2,(1-\eps) k_1]$ has color $+1$.} Let 
$$P':= [k_2,(1-\eps) k_1] \text{ and } N':= \Big[ \frac{p(k)-|P'|}{2}, \frac{p(k)+|P'|}{2} \Big].$$ Then, at least $0.4$-fraction of the numbers in $N'$ have color $-1$, as otherwise there are $\eps k$ monochromatic solutions of the form $(y,p(k)-y, k)$ where both $y$ and $p(k)-y$ are in $N'$ and colored $+1$. Hence, $P',N'$ are two intervals of length $(1-\eps)k_1 - k_2 \geq k/10$ which are at least $p(k)/4 - k_1 \geq \ell^2 k^{d-1/2}$ apart, thus 
\ref{P2} and \ref{P3} both hold. As $|P'|=|N'|\leq m_1(k)$, Lemma~\ref{lem: two intervals} implies that there are at least $
\eps k$ distinct monochromatic solutions in $[p(k)]$.
\end{proof}

The above lemma is sufficient for us to deal with the case $d=2$, but for larger values of $d$, it is not sufficient as we need to obtain more monochromatic solutions. 
For $d\geq 3$, we will take many interval pairs of opposite colors and repeatedly apply Lemma~\ref{lem: two intervals} to those interval pairs $(P_i,N_i)$. 
If the sets $[\min(P_i),\max(N_i)]$ form disjoint intervals, then repeated applications of Lemma~\ref{lem: two intervals} yield disjoint bad pairs.
However, we will only be able to obtain intervals in a nested way as follows. $$[\min(P_1),\max(N_1)]\subseteq [\min(P_2),\max(N_2)] \subseteq \dots \subseteq 
[\min(P_{\eta k}),\max(N_{\eta k})]$$
As these interval pairs will be nested, application of Lemma~\ref{lem: two intervals} to the pair $P_{i_*+1},N_{i_*+1}$ may yield some new bad pairs within $[\min(P_{i_*}),\max(N_{i_*})]$, which could intersect with old bad pairs obtained in previous pairs of intervals. Hence, we need to ensure that many of these new bad pairs are disjoint from the old bad pairs.

As explained in Section~\ref{sec: proof idea}, for each $x\in P$, we will choose a sequence from $x\in P_{i_*+1}$ to $y\in N_{i_*+1}$ which avoids old bad pairs.
Let $B_{i_*}$ be the set of all numbers in an old bad pair. 
We will restrict our choice of sequences to certain `good' sequences. Then for each $x\in P_{i_*+1}$, we consider the set $B^x$ of all possible numbers $b\in B_{i_*}$, which may be included in some `good' sequence from $x$.
By appropriately designing a `good' sequence to make it compatible with  Proposition~\ref{prop: t terms independent}, we can ensure that $B^x$ and $B^{x'}$ are pairwise disjoint for all $x\neq x'\in P_{i_*}$. As we have controls on the size of $B_{i_*}$, this disjointness allows us to choose a large subset $P'\subseteq P_{i_*}$ of numbers $x$ such that $B^x$ is small and  `thin'. 
Furthermore, exploiting the `goodness' of the sequence and using the fact that $B^x$ is small and `thin' for those $x\in P'$, we can transform this into a graph problem regarding paths and utilize Lemma~\ref{lem: grid} to find an appropriate sequence from $x$ to $y\in N_{i_*+1}$ which does not contain any numbers in $B_{i_*}$. 
The following lemma will encapsulate these ideas.

\begin{lemma}\label{lem: many intervals}
Suppose $d\geq 3$ and $k$ is an isolated positive switch and $Q$ is a number with $\frac{1}{\ell} p(k) \leq Q\leq p(k)$.
Suppose that the intervals $P_1,\dots, P_{\eta k}, N_1,\dots, N_{\eta k}$ in $[Q]$ satisfy the following for each $i\in [\eta k]$.
\begin{enumerate}[label={\rm (PN\arabic*)}]
\item \label{PN1} $\eps k < |P_i| = |N_i|  < k/2$ and $N_i = Q - P_i$.
\item \label{PN2} $\max(P_{i})- \max(P_{i+1}) \geq 10 \ell  m_1(k)$,
\item \label{PN4} $\max(N_1) - \min(P_1) \geq \frac{1}{\ell} Q$.
\item \label{PN5} $|P_i\cap \Phi^+| + |N_i \cap \Phi^-| \geq \frac{5}{4}|P_i|.$
\end{enumerate}
Then, there exists at least $ \frac{\eta^3 k^{2}}{2\log{k}}$ distinct monochromatic solutions in $[p(k)]$.
\end{lemma}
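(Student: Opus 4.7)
The plan is to process the interval pairs one at a time, iterating from $i_*=\eta k$ down to $i_*=1$, maintaining a growing collection $\mathcal{B}$ of pairwise disjoint bad pairs, and conclude via Lemma~\ref{lem: bad}. The goal at each step is to extract $\Omega(\eta^{2} k/\log k)$ new bad pairs disjoint from $\bigcup\mathcal{B}$, giving $\Omega(\eta^{3} k^{2}/\log k)$ in total.

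At step $i_*$, set $T_{i_*}:=\min(N_{i_*})-\min(P_{i_*})$. By \ref{PN1} and \ref{PN4} this quantity lies in $[\Omega(Q/\ell),Q]\subseteq[\Omega(p(k)/\ell^{2}),p(k)]$. Exactly as in the proof of Lemma~\ref{lem: two intervals} set $M=\sum_{i\in I}\ell\, m_{i}(k)$ with $I=\{t\,2^{j}:t\in[d-1],\,1\leq j\leq q+\tfrac{1}{2}\log k\}$, choose $s$ maximal with $s\,m_{1}(k)\leq T_{i_*}-M$, and invoke Lemma~\ref{lem: estimation} on the residual $T_{i_*}-M-s\,m_{1}(k)\leq m_{1}(k)$ to obtain a vector $\mathbf{v}\in[-\ell,\ell]^{k_{0}-k}$ supported on $I$. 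Absorbing $\ell\mathbf{1}_{I}$ together with the $s$ copies of $m_{1}(k)$ produces a non-negative multiset $\mathbf{w}^{(i_*)}$ of indices with $\mathbf{w}^{(i_*)}\cdot\mathbf{m}^{k}=T_{i_*}\pm\ell$. Call any listing $x=u_{0},u_{1},\dots,u_{t}$ whose consecutive increments (with multiplicity) exhaust $\mathbf{w}^{(i_*)}$ a \emph{good sequence} from $x$. By \ref{PN5}, a constant fraction of $x\in P_{i_*}$ satisfy $\phi(x)=+1$ and $\phi(u_{t})=-1$, so every good sequence from such $x$ contains a bad pair.

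For each positive-coloured $x\in P_{i_*}$ let $B^{x}\subseteq B:=\bigcup\mathcal{B}$ denote the set of numbers of $B$ that lie on at least one good sequence from $x$. If $b\in B^{x}\cap B^{x'}$ for $x\neq x'$, then $x-x'=\sum_{s}\gamma_{s}\,m_{s}(k)$ with $|\gamma_{s}|\leq w^{(i_*)}_{s}$. The crucial design choice is to make the good-sequence template rigid enough that any cross-start coincidence $b$ must occur along a terminal segment employing only $m_{1}(k),\ldots,m_{d-1}(k)$; then $\gamma$ is supported on $\{1,\ldots,d-1\}$, and Proposition~\ref{prop: t terms independent} with $t=d-1$ forces $|x-x'|\geq\tfrac{k}{2}$, contradicting $|P_{i_*}|<\tfrac{k}{2}$ from \ref{PN1}. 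Hence the sets $B^{x}$ are pairwise disjoint and $\sum_{x}|B^{x}|\leq|B|\leq 2|\mathcal{B}|$. A dyadic level-sum argument in the spirit of Lemma~\ref{lem: inequality} then extracts a subset $P'\subseteq P_{i_*}$ of size $\Omega(\eta^{2}k/\log k)$ of positive-coloured $x$ for which $B^{x}$ is both small (at most $\eta^{2}k/10$) and thinly distributed near the endpoints of the sequence space, in the sense required by \ref{G1}--\ref{G2}.

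For each $x\in P'$, parametrise the remaining free orderings in the good-sequence template by directed paths in a reduced grid digraph $G(n;m)$: each directed path from $(0,0)$ to $(n,n)$ encodes a distinct good sequence from $x$, and the forbidden vertex set $Z$ is the image of $B^{x}$. Lemma~\ref{lem: grid} then produces a good sequence from $x$ reaching $N_{i_*}$ while avoiding $B$, yielding a new bad pair disjoint from $\mathcal{B}$. Adding these bad pairs to $\mathcal{B}$ and iterating completes the construction. The principal obstacle is the \emph{design of the good-sequence template}: it must simultaneously admit a 2D parametrisation compatible with Lemma~\ref{lem: grid}, confine any cross-start coincidence to indices $\{1,\ldots,d-1\}$ so that Proposition~\ref{prop: t terms independent} delivers the disjointness of reachable sets, and permit the dyadic level-sum estimate above (the source, via Lemma~\ref{lem: inequality}, of the unavoidable $1/\log k$ factor in the final count).
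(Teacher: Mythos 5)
Your plan correctly identifies the architecture --- iterate over interval pairs while accumulating pairwise disjoint bad pairs, use the independence proposition to make the reachable-bad-element sets $B^x$ pairwise disjoint, thin to a subset $P'$ via a dyadic estimate, and route a grid path to produce a new bad pair --- but there is a genuine gap, which you yourself flag as ``the principal obstacle'' and then leave unresolved: your good-sequence template is inconsistent with the 2D parametrisation you invoke. You build $\mathbf{w}^{(i_*)}$ ``exactly as in the proof of Lemma~\ref{lem: two intervals}'': $s$ copies of $m_1(k)$ plus $O(\ell\log k)$ exotic steps supported on $I$. This multiset has a single bulk step size, so its orderings are nothing like monotone lattice paths from $(0,0)$ to $(n,n)$, and Lemma~\ref{lem: grid} does not apply. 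The fix, and the key new ingredient in the paper's proof, is to take $t$ copies of $m_1(k)$ \emph{and} $t$ copies of $m_2(k)$ as the bulk (with only a short correction on $I$ spliced into the middle), and to restrict to ``admissible'' orderings whose prefixes keep the $m_1$- and $m_2$-counts within $\eta^2 t$ of each other; these orderings are exactly the $(0,0)\to(t,t)$ paths of the reduced grid $G(t;\eta^2 t)$. Your proposed use of Proposition~\ref{prop: t terms independent} with $t=d-1$ points the wrong way as well: a $(d-1)$-step bulk would require a $(d-1)$-dimensional grid, while Lemma~\ref{lem: grid} is strictly two-dimensional. The paper applies the proposition with $t=2$, giving $|x-x'|>\frac{1}{2} k^{d-2}\geq k/2$ since $d\geq 3$, which is precisely what the two-step bulk needs.

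Two further pieces are missing. Since the correction $w_1,\dots,w_{t'}$ is spliced into the middle of the bulk, the set to be avoided along the pure $(m_1,m_2)$-walk is not $B_{i_*}$ itself but a shifted enlargement $B_{\rm down}\cup B_{\rm mid}\cup B_{\rm up}$, obtained by subtracting the partial sums $w'_s$ from appropriate slices of $B_{i_*}$; the check that avoiding the enlarged set on the bulk walk forces the full spliced sequence to miss $B_{i_*}$ is the content of Claim~\ref{cl: r useful}, and your sketch never confronts it. And to feed hypothesis~\ref{G2} of Lemma~\ref{lem: grid} one needs a running invariant (the paper's \ref{Ind2}) controlling how many accumulated bad-pair elements lie in each concentric shell $S_i\cup T_i$; this must be maintained across iterations and forces an inside-out order --- process $P_{i_*+1}$ at step $i_*$, so newly created bad elements land inside the current nested interval and only shell densities near the fresh entry and exit points need tracking --- not the $i_*=\eta k$ down to $1$ order you describe, under which the old bad pairs can sit anywhere inside the smaller interval you are about to traverse.
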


As Lemma~\ref{lem: many intervals} is rather involved, we defer its proof to the end of this section. Lemma~\ref{lem: many intervals} can be used to prove Lemma~\ref{lem: isolated} below to handle all higher degree polynomials.

\begin{lemma}\label{lem: isolated}
If $k$ is an isolated positive switch, and $d\geq 3$, then $[k_2, p(k)]$ contains at least $\frac{\eta^3 k^{2}}{2\log{k}}$ distinct monochromatic solutions.
\end{lemma}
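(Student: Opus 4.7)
The plan is to reduce to Lemma~\ref{lem: many intervals} with $Q=p(k)$ by producing $\eta k$ nested interval pairs $(P_i,N_i)_{i\in[\eta k]}$ satisfying \ref{PN1}--\ref{PN5}. The construction splits on the number of color $-1$ elements in the range $[k_2,(1-\eps)k_1]$ (or more precisely a sub-range on which $p'(z)=\Theta(k^{d-1})$, so that linear spacing in $z$ translates cleanly to the required spacing of $P_i$'s).

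In the main case I assume $|[k_2,(1-\eps)k_1]\cap\Phi^-|\geq 2\eta k\ell$ and greedily pick $z_1>z_2>\cdots>z_{2\eta k}$ in $\Phi^-\cap[k_2,(1-\eps)k_1]$ with $z_i-z_{i+1}\geq C\ell$ for a suitable constant $C$. Since $p'(z)=\Theta(k^{d-1})=\Theta(m_1(k))$ on the chosen range, this ensures $p(z_i)-p(z_{i+1})\geq 10\ell\,m_1(k)$. Setting $P_i:=p(z_i)-[k+1,k_0]$ and $N_i:=Q-P_i$, conditions \ref{PN1}, \ref{PN2}, \ref{PN4} are routine: sizes satisfy $|P_i|=|N_i|=k_0-k\in(\eps k,k/2)$ from $k_0=(2^{1/d}\pm\eps)k$; the spacing in \ref{PN2} is built in; and $p(z_1)\leq p((1-\eps)k_1)<(1-\eps)p(k)/2$ gives $\max(N_1)-\min(P_1)=p(k)-2\min(P_1)\geq\eps\,p(k)\geq p(k)/\ell$.

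For each index $i$ at which \ref{PN5} fails, I extract $\geq|P_i|/4$ direct monochromatic solutions by case analysis. If $|P_i\cap\Phi^-|\geq|P_i|/4$ (case (i)), then each $y\in P_i\cap\Phi^-$ gives a monochromatic $-1$ solution $(y,p(z_i)-y,z_i)$, because $p(z_i)-y\in[k+1,k_0]\subseteq\Phi^-$ and $\phi(z_i)=-1$. Otherwise $|P_i\cap\Phi^+|>3|P_i|/4$, and the failure of \ref{PN5} forces $|N_i\cap\Phi^+|>|P_i|/2$ (case (ii)); pairing each $y\in N_i\cap\Phi^+$ with $p(k)-y\in P_i$ and discarding the $\leq|P_i|/4$ bad pairings yields $\geq|P_i|/4$ monochromatic $+1$ solutions $(y,p(k)-y,k)$. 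Solutions from distinct failing indices are pairwise distinct (case (i) solutions by $z_i$, case (ii) solutions by the pairwise disjoint $N_i$'s, where disjointness comes from \ref{PN2}, and the two cases by $z_i\neq k$). Thus if at least $\eta k$ of the $2\eta k$ indices fail \ref{PN5}, the direct count $\geq\eta k\cdot\eps k/4$ already dominates $\eta^3 k^2/(2\log k)$; otherwise at least $\eta k$ indices satisfy \ref{PN5} and Lemma~\ref{lem: many intervals} applied to them yields the required bound.

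The main obstacle is the complementary case $|[k_2,(1-\eps)k_1]\cap\Phi^-|<2\eta k\ell$, in which this range is essentially all $+1$ and one cannot locate $\eta k$ well-spaced color $-1$ values for $z_i$. A direct adaptation of Case 2 of Lemma~\ref{lem: d=2} produces only a single pair $(P',N')$, whose central $N'$ is not of the form $Q-P'$ and therefore violates \ref{PN1}. My plan here is to first collect monochromatic $+1$ solutions $(y,p(k)-y,k)$ with $y\in[k_2,(1-\eps)k_1]\cap\Phi^+$: either enough partners $p(k)-y$ lie in $\Phi^+$ and the target is already met, or the image interval $[p(k)-(1-\eps)k_1,p(k)-k_2]$ is forced to be mostly $-1$. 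This new $-1$ block, together with the isolated switch at $k$, should supply color $-1$ source values (possibly after rescaling the $k$-dependent parameters) for a variant of the main-case construction, reducing the sparse case back to the already-handled one.
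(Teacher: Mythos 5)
Your ``main case'' (many color $-1$ elements in $[k_2,(1-\eps)k_1]$) follows the same strategy as the paper's Case~1: take well-spaced $z_i$ of color $-1$, set $P_i=p(z_i)-[k+1,k_0]$, $N_i=p(k)-P_i$, and feed them into Lemma~\ref{lem: many intervals}, after discarding indices for which the color-balance condition \ref{PN5} fails. Your post-hoc filtering of failing indices is a slightly different bookkeeping from the paper's pre-filtering (the paper first restricts to $I'\subseteq \Phi^-\cap[k_2,k_1]$ with $|A_i\cap\Phi^-|$ and $|A'_i\cap\Phi^+|$ both $O(\eps k)$, then picks the $i_j$'s), but either formulation works, since every failing index contributes $\Omega(\eps k)$ solutions at a distinct third coordinate $z_i$ (or at $z=k$ with pairwise-disjoint $N_i$'s), and $\eta k\cdot \eps k$ dominates the target.

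However, the complementary case is a genuine gap, and your sketch for it cannot be salvaged as written. You propose to harvest solutions of the form $(y,p(k)-y,k)$ with $y\in[k_2,(1-\eps)k_1]\cap\Phi^+$ and declare the target met if enough partners are $+1$. But every such solution has third coordinate $z=k$, so the total you can collect this way is at most $O(k_1)=O(k)$, which is far below the required $\eta^3 k^2/(2\log k)$ for large $k$; the dichotomy ``either the target is met or the image interval is mostly $-1$'' is therefore false. Your fallback --- using the negatively colored image interval $[p(k)-(1-\eps)k_1,p(k)-k_2]$ as new source $z$-values ``after rescaling'' --- also does not go through: these $z$'s are of size $\Theta(p(k))=\Theta(k^d)$, so $p(z)=\Theta(k^{d^2})$ and the intervals $p(z)-[k+1,k_0]$ land far outside $[Q]$ for any $Q\leq p(k)$; there is no rescaling that keeps the construction inside the window where the isolated switch at $k$ is usable. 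What the paper actually does here is change the center of symmetry and the choice of $z$'s: when $[k_2,k_1]$ is mostly $+1$, it uses intervals of the form $p(z)-[k_2,k_1]$ with $z$ ranging over color-$+1$ elements of $[k_1,k]$ (its Case~2, with $Q=p(k+1)$) or over color-$+1$ elements of $[k_2,k_1]$ paired with the first color-$-1$ element $k'\in[k_1,k]$ (its Case~3, with $Q=p(k')$), each time getting $\Theta(k)$ distinct $z$-values and thus $\Theta(k^2)$ direct solutions when color balance fails, before invoking Lemma~\ref{lem: many intervals}. You need an analogous multi-$z$ construction in the complementary case; the $z=k$ idea alone is off by a factor of $k$.
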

\begin{proof}
As $k$ is an isolated positive switch, every $\{a,b\} \subseteq [p(k)]$ with $b=m_{s}(k)$ for some $s\in [k_0-k]$ and $\phi(a) > \phi(b)$ forms a bad set. Assume for a contradiction that
\begin{align}\label{eq: assumption}
\text{there are less than  $\frac{\eta^3 k^{2}}{2\log{k}}$ monochromatic solutions.}
\end{align}
The following notations will be convenient for us. Let $k' := \min\{  [k_1,k] \cap \Phi^{-} \}$ be the smallest number of color $-1$ in the interval $[k_1,k]$ if exists. 
For each $i\in [k_2,k]$, let 
\begin{align}\label{eq: ABCD def}
\begin{split}
A_i &:= p(i) - [k+1,k_{0}],  \enspace
A'_i :=  p(k) - A_i, \\
B_i &:= p(i) - [k_2,k_1], \enspace
B'_i := p(k+1) - B_i \enspace \text{ and } \enspace
B''_i:= p(k') - B_i.
\end{split}
\end{align}

We consider the following three cases. \newline

\noindent {\bf Case 1.} $| \Phi^{-} \cap [k_2,k_1]| > 3\eps k$.

For each $i\in \Phi^{-}\cap [k_2,k_1]$, if $y\in A_i$ satisfies $\phi(y)=-1$, then $(y, p(i)-y, i)$ forms a monochromatic solution, because $p(i)-y \in [k+1,k_{0}] \subseteq \Phi^{-}$ is colored $-1$ as $k$ is an isolated switch.
Hence, if more than $\eps k$ elements $i\in \Phi^{-}\cap [k_2,k_1]$ satisfy $|A_i \cap \Phi^-| \geq 2\eps k$, then we obtain at least $2\eps^2 k^2$ distinct monochromatic solutions, a contradiction to \eqref{eq: assumption}.
Hence, there is $I\subseteq \Phi^{-}\cap [k_2,k_1] $ with 
\begin{align}\label{eq: A pos}
|I| \geq |\Phi^{-}\cap [k_2,k_1]| -\eps k\geq  2\eps k \enspace \text{such that} \enspace 
|A_i \cap \Phi^{-}| \leq 2\eps k \text{ for each $i\in I$}.
\end{align}
Again, for $i\in I$, if more than $3\eps k$ elements $y$ in $A'_i$ satisfy $\phi(y)=+1$, then there are at least $3\eps k - |A_i\cap \Phi^{-}|\geq \eps k$ choices of $y\in A'_i\cap \Phi^+$ satisfying $p(k)-y\in A_i \cap \Phi^+$ and forms a monochromatic solution $(y,p(k)-y,k)$.
Hence, by the same logic as above, in order to avoid obtaining $\eps^2 k^2$ monochromatic solutions, we conclude that there is $I'\subseteq I$ with
\begin{align}\label{eq: A' neg}
|I'| \geq |I|-\eps k \geq \eps k \enspace \text{such that} \enspace 
|A'_i \cap \Phi^{+}| \leq 3\eps k \text{ for each $i\in I'$}.
\end{align}
Let $i_{\eta k} < i_{\eta k-1}< \dots < i_{1}$ be the numbers in $I'$ which are smaller than $k_1- 2\eps k/3$
such that for each $j\in [\eta k]$, 
\begin{align}\label{eq: i scatter}
i_{j} > i_{j+1}+ \ell^2.
\end{align}
Indeed, such numbers exists as $\ell^2 \eta k < 2\eps k/3$. Moreover, as the polynomial $p$ has degree $d \geq 2$, we know $p(i_1)\leq p(k_1-2\eps k/3) = ((1-2\eps /3)^d + o(1)) p(k_1) < (1-\eps) p(k)/2$.
For each $j\in [\eta k]$, let 
$$P_j := A_{i_j} \enspace \text{and} \enspace 
N_j := A'_{i_{j}}.$$
By \eqref{eq: ABCD def}, this choice satisfies \ref{PN1} with $Q=p(k)$.
As $k_2>k/q$,
the equation \eqref{eq: i scatter}, \eqref{eq: mk property} and the definition \eqref{eq: ABCD def} of sets $A_i, A'_i$ imply that \ref{PN2} holds.
Also the fact that $p(i_1)\leq (1-\eps)p(k)$ implies \ref{PN4}, furthermore
\eqref{eq: A pos} and \eqref{eq: A' neg} imply \ref{PN5}. 
Hence, we can apply Lemma~\ref{lem: many intervals} to conclude that there are at least $\frac{\eta^3 k^{2} }{2\log{k}}$ distinct monochromatic solutions in this case, a contradiction to the assumption \eqref{eq: assumption}.
\newline

\noindent {\bf Case 2.} $| \Phi^{-} \cap [k_2,k_1]| \leq 3\eps k$ and 
$| \Phi^{+} \cap [k_1,k]| > 3\eps k$.

Consider a number $i\in \Phi^{+}\cap [k_1,k]$. If $|B_i\cap \Phi^{+}| \geq 4\eps k$, then we obtain at least $4\eps k -| \Phi^{-} \cap [k_2,k_1]|\geq \eps k$ distinct solutions of the form $(y,p(i)-y,i)$ where $y\in [k_2,k_1]\cap \Phi^+$ and $p(i)-y\in B_i \cap \Phi^{+}$.
Thus, by the assumption \eqref{eq: assumption}, at most $\eps k$ numbers $i$ in $\Phi^{+}\cap [k_1,k]$ satisfy $|B_i\cap \Phi^{+}| \geq 4\eps k$. 
This implies that there exists $J \subseteq \Phi^{+}\cap [k_1,k]$ such that
\begin{align}\label{eq: B neg}
|J| \geq | \Phi^{+} \cap [k_1,k]|-\eps k \geq   2\eps k \enspace \text{such that} \enspace 
|B_i \cap \Phi^{+}| \leq 4\eps k \text{ for each $i\in J$}.
\end{align}
Again, for each $i\in J$, if more than $5\eps k$ elements $y$ in $B'_i$ satisfy $\phi(y)=-1$, \eqref{eq: B neg} ensures   monochromatic solutions of the form $(y,p(k)-y,k+1)$ for at least $5\eps k - |B_i\cap \Phi^{+}| \geq \eps k$ choices of $y$. Hence, similarly as above, at least $\eps k$ choices of $i$ would contradict 
the assumption \eqref{eq: assumption}, we conclude that there exists a set $J'\subseteq J$ with 
\begin{align}\label{eq: B' pos}
|J'| \geq |J|-\eps k \geq \eps k \enspace \text{such that} \enspace 
|B'_i \cap \Phi^{-}| \leq 5\eps k \text{ for each $i\in J'$}.
\end{align}
Let $j_{1} < j_2 < \dots < j_{\eta k}$ be the numbers in $J'$ bigger than $k_1 + 2\eps k/3$ such that for each $i\in [\eta k]$, 
\begin{align}\label{eq: j scatter}
j_{i+1} > j_{i}+ \ell^2.
\end{align}
Indeed, such numbers exist as $\ell^2 \eta k < \eps k/3$.
Moreover, as the polynomial $p$ has degree $d\geq 2$, we have $p(j_1)> p(k_1+2\eps k/3) \geq  a_{d} (1+2d\eps/3)k_1^d- q \eps^2 k_1^{d} \geq \frac{1}{2}p(k_1) + \frac{3}{5}\eps p(k)$.

For each $i\in [\eta k]$, let 
$$P_i := B'_{j_i} \enspace \text{and} \enspace 
N_i := B_{j_{i}}.$$
By \eqref{eq: ABCD def}, this choice satisfies \ref{PN1} with $Q=p(k')$.
 By \eqref{eq: j scatter}, \eqref{eq: mk property} and the definition \eqref{eq: ABCD def} of sets $B_i, B'_i$, \ref{PN2} hold.
Also as $p(j_1) \geq  \frac{1}{2}p(k_1) + \frac{3}{5}\eps p(k)$, we also have \ref{PN4}. Finally, \eqref{eq: B neg} and \eqref{eq: B' pos} imply \ref{PN5}. 
Therefore, we can apply Lemma~\ref{lem: many intervals} to obtain at least $\frac{\eta^3 k^{2} }{2\log{k}}$ distinct monochromatic solutions, a contradiction to the assumption \eqref{eq: assumption}.\newline

\noindent {\bf Case 3.} $| \Phi^{-} \cap [k_2,k_1]| \leq 3\eps k$ and 
$| \Phi^{+} \cap [k_1,k]| \leq 3\eps k$.

In this final case, at most $3\eps k$ numbers in $[k_1,k]$ are colored $+1$, so $k'$ exists as in \eqref{eq: ABCD def} and it is between $k_1$ and $k_1 + 3\eps k+1$.

Consider a number $i\in \Phi^{+}\cap [k_2,k_1]$. 
If $|B_i\cap \Phi^{+}| \geq 4\eps k$, then we obtain at least $4\eps k- | \Phi^{-} \cap [k_2,k_1]|\geq  \eps k$ distinct solutions of the form $(y,p(i)-y,i)$ where $y\in [k_2,k_1]\cap \Phi^{+}$ and $p(i)-y\in B_i\cap \Phi^{+}$.
Hence, again by \eqref{eq: assumption}, there are at most $\eps k$ choices of such a number $i$.
Thus there exists $L \subseteq \Phi^{+}\cap [k_2,k_1]$ such that
\begin{align}\label{eq: B'' neg}
|L|\geq k_1-k_2 -| \Phi^{-} \cap [k_2,k_1]| -\eps k  \geq k_1-k_2- 4\eps k \enspace \text{and} \enspace 
|B_i \cap \Phi^{+}| < 4\eps k, ~~\forall i\in L.
\end{align}
Again, for a number $i\in L$, if more than $5\eps k$ elements $y$ in $B''_i$ satisfy $\phi(y)=-1$, then $(y,p(k')-y,k')$ forms a monochromatic solution for at least $5\eps k -|B_i \cap \Phi^{+}| \geq \eps k$ choices of $y\in B_i\cap \Phi^{-}$ and $p(k')-y\in B''_i\cap \Phi^{-}$. 
Hence, by the same logic as above, \eqref{eq: assumption} implies that there are at most $\eps k$ choices of such $i$. Thus there is a set $L'\subseteq L$ with
\begin{align}\label{eq: B'' pos}
|L'| \geq |L|-\eps k\geq k_1-k_2-5\eps k \enspace \text{such that} \enspace 
|B''_i \cap \Phi^{-}| \leq 5\eps k \text{ for each $i\in L'$}.
\end{align}
Let $b_1 < b_2  < \dots < b_{\eta k}$ be the numbers in $L'$ which are bigger than $k_1 - 6\eps k$
such that for each $i \in [\eta k]$, 
\begin{align}\label{eq: b scatter}
b_{i +1}> b_{i}+ \ell^2.
\end{align}
Indeed, such numbers exist as at least 
$\eps k > \ell^2 \eta k$ numbers in $L'$ are bigger than $k_1-6\eps k$.
Moreover, as $\eps\ll 1/d$ and $k'\leq k_1+3\eps k$, we have 
$p(b_1) > p(k' - 9\eps k) \geq  \frac{2}{3}p(k')$.
For each $i\in [\eta k]$, let 
$$P_i := B''_{b_i} \enspace \text{and} \enspace 
N_i := B_{b_{i}}.$$
By the definitions in \eqref{eq: ABCD def}, the condition \ref{PN1} is satisfied with $Q=p(k')$. As $k> k_1$, $\frac{1}{2}p(k)\leq Q \leq p(k)$ holds.
 By \eqref{eq: b scatter} and \eqref{eq: mk property}, \ref{PN2} holds.
Moreover, as we have $p(b_1) - (p(k')-p(b_{\eta k})) \geq  \frac{1}{3} p(k')$, \ref{PN4} holds and
\eqref{eq: B'' neg} and \eqref{eq: B'' pos} imply \ref{PN5}. 
Hence, we can apply Lemma~\ref{lem: many intervals} to obtain at least $\frac{\eta^3 k^{2} }{2\log{k}}$ distinct monochromatic solutions, a contradiction to \eqref{eq: assumption}.
As we have exhausted all three cases, this  proves the lemma.
\end{proof}

\subsection{Putting things together}
We need one more lemma for the proof of Theorem~\ref{thm: counting}. As we assume that $k$ is an isolated switch, all the numbers in $[k+1,k_0]$ are colored $-1$. These consecutive numbers of the same color was very useful for obtaining monochromatic solutions in the lemmas before.
In fact, if we have a much longer such interval $[k+1,r]$, then we can utilize this to obtain even more solutions as follows.

\begin{lemma}\label{lem: long minus}
Suppose that $k$ is a positive switch and $10q m_1(k) \leq r \leq \frac{1}{2} p(k_2)$ and $[k+1, r] \subseteq \Phi^{-}$. Then $[p(k)]$ contains at least $ \frac{ \eta^{2} r k}{ m_1(k)}$ distinct monochromatic solutions.
\end{lemma}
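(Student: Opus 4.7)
The strategy is to produce many pairwise disjoint $(k,s)$-bad pairs with $s\in[1,r-k]$ and then invoke Lemma~\ref{lem: bad}. Since $[k+1,r]\subseteq \Phi^-$ we have $\phi(k+s)=-1$ for every such $s$, so any pair $\{a,a+m_s(k)\}$ with $\phi(a)=+1$ and $\phi(a+m_s(k))=-1$ is automatically bad. Also, since $r\geq 10qm_1(k)\geq k_0-k$ (for $d\geq 2$ and $k$ large), $[k+1,k_0]\subseteq[k+1,r]\subseteq\Phi^-$, so $k$ is an isolated positive switch and all the earlier isolated-switch machinery applies. The upper bound $r\leq p(k_2)/2$ will be used exactly to guarantee that the bad pairs we extract lie inside $[p(k)]$, so the monochromatic triples produced by Lemma~\ref{lem: bad} are genuine members of $[p(k)]^{(3)}$.

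The central plan is to partition $[k+1,r]$ into $K:=\lfloor(r-k)/m_1(k)\rfloor=\Omega(q)$ consecutive blocks $N_1,\dots,N_K$ of length just below $m_1(k)$, each lying entirely in $\Phi^-$, and then to apply Lemma~\ref{lem: two intervals} to each of them. Because $|N_j\cap\Phi^-|=|N_j|$, condition~\ref{P3} of that lemma collapses to the requirement $|P_j\cap\Phi^+|\geq |N_j|/4$ on a companion interval $P_j$ placed before $N_j$ with $\min(N_j)-\min(P_j)\geq\ell^2k^{d-1/2}$ (which is possible for all $j\gtrsim \ell^2k^{1/2}/d a_d$, leaving essentially all of the $K$ blocks ``geometrically feasible''). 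Each successful application produces $\Omega(m_1(k))$ disjoint bad pairs localized to $[\min(P_j),\max(N_j)]$. Summing over $j$'s whose ranges are pairwise disjoint yields on the order of $K\cdot m_1(k)=\Omega(r)$ bad pairs, which comfortably dominates the target $\eta^2 rk/m_1(k)$ using that $m_1(k)=\Omega(k)$ and $\eta\ll 1$.

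The main obstacle, and the place where case analysis is needed, is guaranteeing the existence of the partner intervals $P_j$ for sufficiently many $j$, together with their disjointness. I would split into two cases. \emph{Case (A):} For many $j$'s there is an interval $P_j$ of the required length with $|P_j\cap\Phi^+|\geq|P_j|/4$ in $[1,\min(N_j)-\ell^2k^{d-1/2}]$; one then selects a subfamily of indices with pairwise disjoint ranges $[\min(P_j),\max(N_j)]$, applies Lemma~\ref{lem: two intervals} to each, and adds up. \emph{Case (B):} For most $j$'s no such $P_j$ exists, which forces $|\Phi^+\cap I|<|I|/4$ for every interval $I\subseteq[1,r]$ of length $m_1(k)$; hence $\Phi^-$ has density at least $3/4$ throughout $[1,r]$. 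In this second case I would count monochromatic solutions directly: for each $z\in[k+1,r]$, the number of $y\in\Phi^-\cap[1,p(z)-1]$ with $p(z)-y\in\Phi^-$ is at least $2|\Phi^-\cap[1,p(z)-1]|-(p(z)-1)$ by inclusion--exclusion, and by using Proposition~\ref{prop: sum} together with the density assumption one extracts enough $z$'s whose contribution sums to the required $\eta^2 rk/m_1(k)$. The trickiest bookkeeping will be in Case~(A) for small $d$ (especially $d=2$), where the mandatory gap $\ell^2k^{d-1/2}$ is larger than $m_1(k)$ and thus spreads the applications thin; here I expect to supplement Lemma~\ref{lem: two intervals} with an additional matching/greedy argument in the bipartite graph of all potential bad pairs, using that the degrees are bounded by $r/m_1(k)$, to recover the linear-in-$K$ count of disjoint bad pairs demanded by the statement.
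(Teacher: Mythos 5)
Your proposal has several genuine gaps, the most serious of which makes the whole Case~(A) mechanism unusable for $d\geq 3$.

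\textbf{The companion intervals $P_j$ cannot contain enough positives when $d\geq 3$.}
You take $|N_j|\approx m_1(k)=\Theta(k^{d-1})$ and require $|P_j\cap\Phi^+|\geq |P_j|/4\approx m_1(k)/4$. The position constraint $\min(P_j)\leq\min(N_j)-\ell^2k^{d-1/2}$ together with $\min(N_j)\leq r$ and $|P_j|<\ell^2k^{d-1/2}$ forces $P_j\subseteq[1,r]$. But $[k+1,r]\subseteq\Phi^-$, so $\Phi^+\cap[1,r]\subseteq[1,k]$ and hence $|P_j\cap\Phi^+|\leq k\ll k^{d-1}/4$ for $d\geq 3$. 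Thus condition~\ref{P3} can never hold: Case~(A) produces \emph{zero} successful applications of Lemma~\ref{lem: two intervals} when $d\geq 3$, not merely ``thin'' coverage. Shrinking $|N_j|$ does not help either, because then $|P_j|\leq k$ gives only $O(1)$ disjoint placements of $P_j$ in $[1,k]$.

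\textbf{The counting in Case~(A) overshoots even for $d=2$.}
You claim the pairwise-disjoint subfamily yields ``on the order of $K\cdot m_1(k)=\Omega(r)$ bad pairs,'' but each admissible range $[\min(P_j),\max(N_j)]$ has length at least $\ell^2k^{d-1/2}$, so only $O\!\left(r/(\ell^2k^{d-1/2})\right)$ ranges fit disjointly in $[1,r]$. That gives $O\!\left(r\cdot m_1(k)/(\ell^2k^{d-1/2})\right)=O\!\left(r/(\ell^2k^{1/2})\right)$ bad pairs, which for $d=2$ is \emph{much less} than the required $\eta^2 rk/m_1(k)=\Theta(\eta^2 r)$, since $1/(\ell^2 k^{1/2})\ll\eta^2$. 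And for $r$ near its lower endpoint $10qm_1(k)$ (still $\Theta(qk)$), we have $r\ll\ell^2k^{3/2}$, so \emph{no} $N_j$ even has a geometrically admissible $P_j$, contradicting your parenthetical ``essentially all of the $K$ blocks.''

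\textbf{Case~(B) does not localize correctly.}
Your density-$3/4$ conclusion, even if it followed, only controls $\Phi^-$ on $[1,r]$. For any $z\in[k+1,r]$ you have $p(z)\geq p(k)$ while $r\leq p(k_2)/2\ll p(k)$, so the partner $p(z)-y$ of any $y\leq r$ exceeds $p(k)-r>r$: it falls entirely outside the interval where you have density information. The inclusion--exclusion estimate therefore says nothing. Also, the negation of ``for many $j$ a suitable $P_j$ exists'' is not the statement that every length-$m_1(k)$ subinterval of $[1,r]$ has fewer than a quarter positives; the positives could be concentrated enough to block disjoint placements without being sparse.

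The paper's proof takes a different dichotomy, on the colors inside $[k_2,k_1]$ rather than on the density in $[1,r]$. If some $x\in[k_2,k_1]$ is negative, one uses the long interval $P=p(x)-[k+1,r]$ and its reflection $N=p(k)-P$, and finds a \emph{single} shift $m_s(k)$ carrying most of $P$ into $N$; this sidesteps Lemma~\ref{lem: two intervals} (and its $\ell^2k^{d-1/2}$ gap requirement) and yields $\Omega(r)$ disjoint bad pairs directly. If $[k_2,k_1]$ is entirely positive, one instead takes $\Theta(r/m_1(k))$ values $j$ just below $k$ and shows each block $p(j)-[k_2,k_1]$ lands inside $p(k+1)-[k,r]\subseteq p(k+1)-\Phi^-$, giving $\Theta(k)$ solutions with $z=k+1$ per $j$; that is precisely where the factor $k/m_1(k)$ in the bound comes from. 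You would need to discover something like this case split; the obstacles above are not repaired by a greedy matching argument alone.
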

\begin{proof}
We consider the following two cases depending on the colors of the numbers in the smaller interval $[k_2,k_1]$. \newline

\noindent {\bf Case 1. There exists $x\in [k_2,k_1]$ with color $-1$.}
For this choice of $x$, as $x\geq k_2$, we have $$r\leq \frac{1}{2} p(k_2) \leq  \frac{1}{2} p(x).$$
 Consider two intervals 
$$P:= p(x)- [k+1,r] \text{ and } N: = p(k)-P.$$
As $r< p(x)/2$ and $p(x)\leq p(k_1)\leq \frac{1}{2}p(k)$, we have $P \subseteq [p(k_1)-k-1]\subseteq [\frac{1}{2}p(k)-k-1]$.
Each  $y \in P$ with color $-1$ yields a monochromatic solution of the form $(y, p(x)-y, x)$ because $p(x)-y  \in [k+1,r]$ is colored with $-1$. Thus we may assume that at least $(1-\eps)$-fraction of the numbers in $P$ have color $+1$, otherwise we already obtain at least $\eps r\geq \frac{\eta^{2} r k }{ m_1(k) }$ distinct monochromatic solutions as $d\geq 2$.

Moreover, at least $(1-2\eps)$-fraction of the number in $N$ has color $-1$, as otherwise we obtain at least $\eps (r-k)\geq \frac{\eta^{2} r k }{ m_1(k) }$ monochromatic solutions of the form $(y, p(k)-y, k)$ where both $y\in P$ and $p(k)-y\in N$ are colored with $+1$.

Let $T:= \min(N)- \min(P)$, then  as $P\subseteq [\frac{1}{2}p(k)-k-1]$ we have $r \leq T\leq p(k)$.
As $k$ is sufficiently large, 
$m_1(k)< m_2(k) < \dots < m_{k_{0}}(k)$  is an increasing sequence.
Furthermore, from 
\eqref{eq: mik}, we can deduce that for each $i\in [k_0-k]$,
$$m_{i+1}(k)-m_i(k) = a_d dk^{d-1}+ O(k^{d-2}) \text{ and } m_1(k) = a_d dk^{d-1} + O(k^{d-2}).$$ 
As $k$ is sufficiently large, this implies $m_{i+1}(k) -m_i(k) \leq  q m_1(k)$ for all $i\in [k_0-k]$. 
Also, the definition of $k_0$ yields that $$ m_{k_0-k}(k)= p(k_0) - p(k) \geq p(k) - q m_1(k).$$
From these facts, we conclude that the union of intervals $\bigcup_{s\in [k_0-k]}[m_{s}(k)-qm_1(k), m_{s}(k)+qm_1(k)]$ covers the interval $[p(k)]$, hence there exists $s \in [k_{0}-k]$ such that $m_{s}(k) = T \pm q m_1(k)$.

For this choice of $s$, we have $|(m_{s}(k) + P) \cap N | \geq (r -k)- q m_1(k)\geq (\frac{1}{2}+4\eps) r$.
As $(1-2\eps)$-fraction of $P$ has color $+1$ and $(1-2\eps)$-fraction of $N$ has color $-1$, we obtain at least $(r-k)/2$ many $(k,s)$-bad sets $\{a,b\}$ with $a\in P$ and $b\in N$. Hence, Lemma~\ref{lem: bad} implies that there are at least $\frac{r-k}{2} \geq \frac{\eta^{2} r k }{ m_1(k) }$ distinct monochromatic solutions. \newline

\noindent {\bf Case 2. All the numbers in $[k_2,k_1]$ have the color $+1$.}
Let $$x:= \min\{ y \in [k_1+1, k+1] : \phi(y)=-1\} -1.$$ As $\phi(k+1) =-1$, the integer $x$ must exist and every $i\in [k_2,x]$ is colored with the color $+1$.

Let $I$ be the set of those numbers $i\in [k_2,x]$ for which the interval 
$p(i)- [k_2,k_1]$ contains at least $\eps k$ numbers of color $+1$.
Note that for each $i\in I$ and the number $y\in p(i)- [k_2,k_1]$ of color $+1$, we obtain a monochromatic solution $(y,p(i)-y,i)$.  As this yields at least $|I| \times \eps k$ distinct monochromatic solutions, we may assume that we have $|I| \leq  \frac{ \eta r  }{ m_1(k) }$.
Hence, there are at least $\frac{r}{4m_1(k)}$ numbers between $k-\frac{r}{2m_1(k)}$ and $k$ that does not belong to $I$. Let $J$ be the set of these numbers.
Consider a number $j\in J$.
As $r\leq \frac{1}{2}p(k_2)$, using \eqref{eq: mik} we can check that $k-j\leq q\eta k$. Thus using \eqref{eq: mik}, we have
$$k \leq  p(k+1)-p(j) = a_d d(k-j+1 \pm \ell \eta)k^{d-1}  
\leq (k-j+2) m_1(k).$$
This implies
$$p(j) - [k_2,k_1]  \subseteq p(k+1) - [k, k_1+ (k-j+2)m_1(k)]  \subseteq p(k+1) - [k, r].$$
Here, the final containment holds as $k- \frac{r}{2m_1(k)}\leq j \leq k$ and $r\geq 10 q m_1(k)$.
Thus for each $y\in p(j)-[k_2,k_1]$ with $\phi(y)=-1$, we have $p(k+1)-y \in [k,r]\subseteq \Phi^{-1}$, hence 
 $(y, p(k+1)-y, k+1)$ forms a monochromatic solution of color $-1$.
As $j\notin I$, there are at least $k_1-k_2-\eps k$ such choices of $y$ for each $j\in J$.
Therefore, we obtain at least
$$ \frac{r}{2m_1(k)} (k_1-k_2 -\eps k) \geq \frac{ \eta^2 r  k }{ m_1(k) }$$
distinct monochromatic solutions.
This proves the lemma.
\end{proof}

Now, we have collected all the lemmas and we are ready for the proof of Theorem~\ref{thm: counting}.

\begin{proof}[Proof of Theorem~\ref{thm: counting}]

Fix a number $q$ with $0<1/n\ll 1/q \ll 1/d, 1/\|p\|$ and two constants $c_1, c_2< 1$ with $1/q< a_d (10 c_1)^d< c_2 < 1/(10a_d)$ and assume $n$ is sufficiently large.
We consider an interval $I=[  c_1 n^{1/d^2},  c_2 n^{1/d}]$. As $n$ is sufficiently large, the polynomial $p(z)$ is strictly increasing  for $z\geq  c_1 n^{1/d^2}$. Moreover, by the choice of $c_1,c_2$, we have
\begin{align*}
    2 p(2c_1 n^{1/d^2}) < c_2 n^{1/d} \quad \text{ and }\quad 
2 p(c_2 n^{1/d}) < n.
\end{align*}
Thus if $I$ is monochromatic, we obtain at least $\Omega(n^{2/d^2})$ monochromatic solutions of the form $(x,y,z)$ with $z\in [c_1 n^{1/d^2}, 2c_1 n^{1/d^2}]$ and $x,y\in I$ as desired.
Otherwise, take the largest switch $k\in I$. 

If $k$ is not an isolated switch, there must be another switch $k^*$ larger than $k$ and $p(k^*)< 2p(k)-4m_1(k)$ by the definition of an isolated switch. In particular, $k^*< 2k$.
But the maximality of $k$ implies that such a switch must not be in $I$. Hence, $c_2 n^{1/d}< k^* < 2k$, meaning that $k> \frac{1}{2} c_2 n^{1/d}$. Now Lemma~\ref{lem: non isolated} implies that there are at least $k^{1-o(1)} = \Omega(n^{1/d-o(1)}) > n^{2/d^2-o(1)}$ distinct monochromatic solutions. Thus we may assume that $k$ is an isolated switch. \newline

{\noindent \bf  Case 1.} $d\geq 3$.
As $k$ is an isolated switch,  Lemma~\ref{lem: isolated} implies that there are at least $k^{2-o(1)}\geq n^{2/d^2-o(1)}$ distinct monochromatic solutions.\newline

{\noindent \bf  Case 2.} $d= 2$.
As $k$ is an isolated switch, all numbers in $I$ bigger than $k$ are colored $-1$.

If $c_2 n^{1/d} \leq 10 q m_1(k)$, then as $m_1(k) \leq d a_d k+ q$, we have $k\geq q^{-2} n^{1/d}$. Hence Lemma~\ref{lem: d=2} implies that there are at least $\Omega(k)=\Omega(n^{1/d}) \geq  \Omega(n^{2/d^2})$ distinct monochromatic solutions.

If $c_2 n^{1/d} \geq 10 q m_1(k)$, then
$m:=\frac{1}{2}p(\frac{1}{q} k) \leq c_2 n^{1/d}$ as $1/q < c_2$.
Moreover, $m= \Omega(k^2) = \Omega(n^{2/d^2})$ as $k>c_1 n^{1/d^2}$.
Thus Lemma~\ref{lem: long minus} with $m$ playing the role of $r$ yields at least $ \Omega(\frac{m k}{m_1(k)})= \Omega(m)
= \Omega(n^{2/d^2})$ distinct monochromatic solutions.
This proves the theorem.
\end{proof}

\subsection{Proof of the key lemma, Lemma~\ref{lem: many intervals}}
As $k$ is an isolated positive switch, every pair $\{a,b\} \subseteq [p(k)]$ with $b=a+m_{s}(k)$ for some $s\in [k_0-k]$ and $\phi(a) > \phi(b)$ forms a bad pair. 
We will inductively construct pairwise disjoint bad sets.
For each $i \in [\eta n-1]$, let 
\begin{align*}
S_i&:= [\min(P_{i+1}), \min(P_{i})-1], &T_i&:= [\max(N_{i})+1, \max(N_{i+1})]  \text{ and } \\
R&:= [\min(P_1), \max(N_1)]. & &
	\end{align*}
These sets provide a partition of the intervals $[\min(P_{\eta k}), \max(N_{\eta k})]$.

Assume that for some $i_* \in \{0,1,\dots, \eta k\}$, 
we have a collection $\mathcal{B}_{i_*}$ of pairwise disjoint bad pairs with $B_{i_*} := \bigcup_{\{a,b\} \in \mathcal{B}_{i_*}} \{a,b\}$ satisfying the following.
\begin{enumerate}[label={\rm (Ind\arabic*)}]
\item \hspace{-0.16cm}$_{i_*}$ \label{Ind1} $\displaystyle B_{i_*}\subseteq R\cup \bigcup_{i\in [i_*]} (S_i\cup T_i) \enspace \text{and} \enspace |B_{i_*}| = \frac{\eta^2 k}{\log {k}} \cdot \max\{ i_* - \eta k/2 , 0 \}. $

\item \hspace{-0.16cm}$_{i_*}$\label{Ind2} For each $j\in [i]$, we have
$\displaystyle \Big|B_{i_*}\cap  \bigcup_{i={i_*}-j+1}^{i_*-1} (S_i\cup T_i) \Big| \leq \frac{(j-1)\eta^2 k }{\log{k}}.$
\end{enumerate}

 Note that the empty collection $\mathcal{B}_i = \emptyset$ satisfies both \ref{Ind1}$_{i_*}$ and \ref{Ind2}$_{i_*}$ for $i_*\leq \eta k/2$. We assume that we have a collection $\mathcal{B}_{i_*}$ satisfying \ref{Ind1}$_{i_*}$ and \ref{Ind2}$_{i_*}$ with
$\eta k/2\leq i_*\leq \eta k$ and 
we will show that we can construct $\mathcal{B}_{i_*+1}$ satisfying \ref{Ind1}$_{i_*+1}$ and \ref{Ind2}$_{i_*+1}$
from the given $\mathcal{B}_{i_*}$. This shows that we can inductively build the collections $\mathcal{B}_{1},\dots, \mathcal{B}_{\eta k}$ and the final set $\mathcal{B}_{\eta k}$ will be our desired collection of pairwise disjoint bad pairs.

As explained before, we plan to find a sequence $\mathbf{u}(x) = u_0, u_1,\dots, u_{m}$ of copies of $m_s(k)$s
for some $x\in P_{i_*+1}\cap \Phi^+$ where $x$ and $x+ \sum_{i\in [m]} u_{i}\in N_{i_*+1}\cap \Phi^-$.
This will yield a bad pair. In order to make sure that this bad pair is disjoint from the ones in $\mathcal{B}_{i_*}$, we will carefully choose this sequence.
First, we need to decide how many $m_s(k)$ we have to put in $\mathbf{u}$ to ensure $x+ \sum_{i\in [m]} u_{i} \in N_{i_*+1}$.

Let $T := \min(N_{i_*+1})-\min(P_{i_*+1})$, then $T$ is our target value for $\sum_{i\in [m]} u_{i}$.
Note that $T$ is a large number with $Q/(2\ell)\leq T \leq Q$ from \ref{PN1} and \ref{PN4}.
Furthermore, from \ref{PN1}, we have  
\begin{align}\label{eq: min+T}
    \min(P_{i_*+1})+  \frac{1}{2}|P_{i_*+1}| + \frac{1}{2}T  = \frac{1}{2}Q.
\end{align}
 Let 
$$I:= \{ t 2^j: t\in [d-1], 1\leq j\leq q+ \frac{1}{2} \log{k}\} \text{ and }
M :=\sum_{i \in I } \ell ~ m_{i}(k).$$
Any number $i\in I$ satisfies $i\leq 2^{2q} k^{1/2}$ and \eqref{eq: mk property} implies that $\frac{1}{2}ia_d dk^{d-1}\leq m_{i}(k) \leq i q k^{d-1}$. Hence we have \begin{align}\label{eq: M size}
 \ell k^{d-1/2} < M < \ell^2 k^{d-1/2} \leq \eta Q.
\end{align}
Let $t$ be the maximum number such that 
$$ t (m_1(k)+m_2(k))  \leq T-M \leq T-\eta Q.$$
As $m_1(k)+m_2(k) <\eta Q$, the maximality of $t$ implies 
\begin{align}\label{eq: t'mm size}
    t (m_1(k)+m_2(k))  =  T\pm \eta Q.
\end{align}
Furthermore, as $Q/(2\ell)\leq T \leq p(k)\leq k m_1(k)$ and $Q> p(k)/\ell$, this implies 
\begin{align}\label{eq: s size}
 k/\ell^3 \leq t \leq k. 
\end{align}
By the maximality of $t$,  we have
$$T-M - t \sum_{s\in [2]} m_s(k) \leq \sum_{s \in [2]} m_s(k) \leq q k^{d-1},$$
so Proposition~\ref{lem: estimation} implies that 
there exists a vector $\mathbf{v} \in [-\ell,\ell]^k$ satisfying
$\mathbf{m}^{k}\cdot \mathbf{v} 
= T - M - t\sum_{s\in [2]} m_s(k) \pm \ell$ with $\mathbf{v}_i=0$ for all $i\notin I$.
This yields the following non-negative integral combination of $m_s(k)$ as desired.
\begin{align}\label{eq: target}
\mathbf{m}^{k} \cdot( \mathbf{v} + \ell \mathbf{1}_I ) + t\sum_{s\in [2]} m_s(k)= T \pm \ell.
\end{align}
We know that $ \mathbf{v} + \ell \mathbf{1}_{I } \in [2\ell]^{k}$.
We fix an arbitrary sequence $w_1,\dots, w_{t'}$ such that, for each $s\in I$,
$m_{s}(k)$ occurs exactly $\mathbf{v}_{s} + \ell$ times in the sequence and no other terms appear in the sequence. 
As the $i$-th coordinate of $\mathbf{v} + \ell \mathbf{1}_{I }$ is zero for all $i\notin I$,
the definition of $I$ yields
\begin{align}\label{eq: f' size}
 t'=\|  \mathbf{v} + \ell \mathbf{1}_{I} \|_1 \leq 2\ell |I| \leq 3d \ell  \log{k}.
\end{align}
For each $s\in [t']$, let 
$$w'_s= \sum_{i\in [s]} w_i \enspace \text{and} \enspace W'=\{ w'_s: s\in [0,t']\}.$$
In particular, $W'$ contains zero as $w'_0$ is zero.
As $m_s(k)\leq \ell k^{d-1/2}$ for all $s\in I$,
\eqref{eq: f' size} implies
\begin{align}\label{eq: w size}
  w'_{t'} \leq \ell^3 k^{d-1/2} \log k \leq \eta Q.
\end{align}

For each $x\in P_{i_*+1}$, let 
$$y(x):= x+ \mathbf{m}^{k} \cdot( \mathbf{v} + \ell \mathbf{1}_I ) + t\sum_{s\in [2]} m_s(k).$$
As \eqref{eq: target} ensures that this value is 
$x+ T \pm \ell$, we have $y(x)\in N_{i_*+1}$ for all 
$x\in P_{i_*+1}$ with $\min(P_{i_*+1})+\ell \leq x\leq \max(P_{i_*+1})-\ell$.
Let 
$$P:= \{ x\in P_{i_*+1}: y(x)\in N_{i_*+1}, \phi(x)=+1 \text{ and } \phi(y(x))=-1 \}.$$
Then \ref{PN5} and \eqref{eq: target} implies
$$|P| \geq \frac{1}{4}|P_{i_*+1}| - 2\ell \geq \frac{1}{5}\eps k.$$

For each $x\in P$, we aim to construct a sequence $\mathbf{r}(x)=r_1,\dots, r_{2t}$ which contains exactly $t$ copies of $m_1(k)$ and exactly $t$ copies of $m_2(k)$. After obtaining this, we will insert the terms $w_1,\dots, w_{t'}$ in the middle of this sequence as follows to obtain our desired sequence $\mathbf{u}(x)$
$$\mathbf{u}(x):=(u_1,\dots, u_{2t+t'}):= (r_1, r_2,\dots, r_{t}, w_1,\dots, w_{t'}, r_{t+1}, \dots r_{2t}).$$ 
With this choice, the sequence
$$x+ \mathbf{u}(x):= (x, x+u_1, x+u_1+u_2,\dots, x+ \sum_{i\in [2t+t']} u_i )$$
will be our final desired sequence which starts at $x\in \Phi^+$ and ends at $y(x)\in \Phi^-$ and all consecutive terms differ by $m_s(k)$ for some $s\in I\cup \{1,2\} \subseteq [k_0-k]$.

We will make sure that
the sequence $\mathbf{r}(x)$ chosen for this $x$ is constructed so that the above sequence $x+ \mathbf{u}(x)$ does not intersect with $B_{i_*}$ for all $x\in P'$ where $P'$ is a not so small subset of $P$.

We call a sequence $\mathbf{r}=(r_1,\dots, r_{2t})$ \emph{admissible} if the number of occurrences of $m_1(k)$ and $m_2(k)$ in the first $j$ terms differ by at most $\eta^2 t$ for all $j\in [2t]$.
 As admissible sequences are nice to work with, we will choose our sequence $\mathbf{r}(x)$ among the admissible sequences.

If a sequence $r_1,\dots, r_{2t}$ is admissible, then \eqref{eq: t'mm size} and \eqref{eq: min+T} imply the following.
\begin{align}\label{eq: admissible}
    \begin{split}
        \sum_{i\in [t]} r_i  &= (\frac{1}{2} \pm \eta^2 )t\sum_{s\in [2]}m_{s}(k) =  \frac{1}{2} (T\pm 2\eta Q) \pm  \eta Q =  \frac{1}{2}Q - \min(P_{i_*+1}) \pm 4\eta Q \\
    &= (\frac{1}{2}\pm 4\eta)Q- \min(P_{i_*+1}).
    \end{split}
\end{align}
Our desired sequence $\mathbf{u}(x)$ is different from $\mathbf{r}(x)$ as we insert the terms $w_1,\dots, w_{t'}$ in the middle of the sequence. As the sequence $\mathbf{r}(x)$ consists of only $m_1(k)$ and $m_2(k)$,  it is simpler. So we want to focus on the sequence $\mathbf{r}(x)$. We define the following set $B$ so that we only have to focus on avoiding $B$ in the sequence 
$$x+\mathbf{r}(x) := (x, x+r_1, x+r_1+r_2,\dots, x+ \sum_{i\in [2t]} r_i), $$ instead of worrying about whether the sequence $x+\mathbf{u}(x)$ intersects $B_{i_*}$. Let
\begin{align*}
  B&:= B_{\rm down}\cup  B_{\rm mid} \cup B_{\rm up} \enspace \text{ with } \\
  B_{\rm down} &:= \{ b\in B_{i_*} : b< (\frac{1}{2}-\eps)Q\}, \\
  B_{\rm mid} &:= \{ b\in B_{i_*} :  (\frac{1}{2}-\eps)Q\leq b \leq (\frac{1}{2}+\eps)Q\} - W',\\
  B_{\rm up} &:= \{ b\in B_{i_*} :   b > (\frac{1}{2}+\eps)Q\} - w'_{t'}.
\end{align*}
In other words, $B_{\rm down}$ consists of all the numbers in $B_{i_*}$ which are smaller than $(\frac{1}{2}-\eps)Q$, and $B_{\rm mid}$ consists of all numbers of the form $b-w'_s$ for some $b\in B_{i_*}$ with $b=(\frac{1}{2}\pm \eps)Q$ and $s\in [0,t']$,
and $B_{\rm up}$ consists of all numbers of the form $b - w'_{t'}$ for some $b\in B_{i_*}$ with $b> (\frac{1}{2}+\eps)Q$.

The following claim shows that this set is indeed the set that we need to avoid intersecting with $x+\mathbf{r}(x)$.
\begin{claim}\label{cl: r useful}
    For a number $x\in P$ and an admissible sequence $\mathbf{r}(x) = r_1,\dots, r_{2t}$, if $x+ \sum_{i\in [s]} r_i \notin B$ for all $s\in [2t]$, then $x+ \sum_{i\in [s]} u_i \notin B_{i_*}$ for any $s\in [2t+t']$.
\end{claim}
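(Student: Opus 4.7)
The plan is to verify the claim by a case analysis on $s \in [2t+t']$, splitting into three regimes according to where the block $w_1,\ldots,w_{t'}$ sits relative to $s$. Explicitly, if $s \leq t$ then $\sum_{i\in[s]}u_i = \sum_{i\in[s]}r_i$; if $t < s \leq t+t'$ then $\sum_{i\in[s]}u_i = \sum_{i\in[t]} r_i + w'_{s-t}$; and if $t+t' < s \leq 2t+t'$ then $\sum_{i\in[s]}u_i = \sum_{i\in[s-t']}r_i + w'_{t'}$. In each regime the $\mathbf{u}(x)$ partial sum is an $r$-partial sum at some index in $[2t]$, shifted by an element of $W'$, namely $0$, $w'_{s-t}$, or $w'_{t'}$. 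Assume for contradiction that this partial sum equals some $b\in B_{i_*}$.

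The next step is to localize where the partial sum of $\mathbf{u}(x)$ can lie in each regime. Using the admissibility estimate $\sum_{i\in[t]}r_i = (\tfrac12 \pm 4\eta)Q - \min(P_{i_*+1})$ from \eqref{eq: admissible}, the monotonicity of $r$-partial sums, the bound $w'_{t'}\leq \eta Q$ from \eqref{eq: w size}, and $|P_{i_*+1}| < k/2 \ll \eta Q$ from \ref{PN1}, a direct computation should give that in regime $1$ the partial sum lies below $(\tfrac12 + \eps)Q$; in regime $2$ it lies inside the window $[(\tfrac12 - \eps)Q, (\tfrac12 + \eps)Q]$; and in regime $3$ it lies above $(\tfrac12 - \eps)Q$. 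All three bounds rely on the hierarchy $\eta \ll \eps$.

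Once $b$ is localized, the decomposition $B = B_{\rm down} \cup B_{\rm mid} \cup B_{\rm up}$ provides the contradiction. If $b < (\tfrac12-\eps)Q$, the localization forces us into regime $1$, so the shift is $0$ and the $r$-partial sum equals $b \in B_{\rm down}\subseteq B$. If $b \in [(\tfrac12-\eps)Q, (\tfrac12+\eps)Q]$, then subtracting the regime's shift (which lies in $W'$) puts $b$ inside $B_{\rm mid}\subseteq B$, and this shifted value equals the $r$-partial sum. If $b > (\tfrac12+\eps)Q$, the localization forces us into regime $3$, and then $b - w'_{t'} \in B_{\rm up}\subseteq B$ equals the $r$-partial sum. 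In all three cases the hypothesis $x + \sum_{i\in[s']} r_i \notin B$ for $s' \in [2t]$ is violated.

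The main obstacle I anticipate is the regime $2$ range estimate, which squeezes the partial sum into a window of width $2\eps Q$ around $\tfrac12 Q$. This is where the admissibility of $\mathbf{r}(x)$ is used to its full strength, together with the hierarchy $1/k \ll \eta \ll \eps \ll 1/\ell$ to make $|P_{i_*+1}|$ and $w'_{t'}$ negligible on the scale of $\eps Q$ (exploiting $Q \geq p(k)/\ell$ so that $k \ll \eps Q$). Once these bounds are established, the rest of the argument is purely matching each possible location of $b$ with the corresponding piece of the decomposition of $B$.
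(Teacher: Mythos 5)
Your proposal is correct and takes essentially the same approach as the paper: you split the $\mathbf{u}(x)$-partial sums into the three regimes of $s$ (before, during, after the inserted $w$-block), observe that in each regime the partial sum is an $r$-partial sum plus a shift lying in $W'$, localize each regime relative to $(\tfrac12\pm\eps)Q$ via~\eqref{eq: admissible} and~\eqref{eq: w size}, and match the location of $b$ to $B_{\rm down}$, $B_{\rm mid}$, $B_{\rm up}$. The only difference is cosmetic — the paper cases first on the value of $b$ and then deduces the regime of $s$, whereas you case first on $s$ and then on $b$ — but the estimates and the contradiction are the same.
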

\begin{proof}
Suppose not. Then there exists $b\in B_{i_*}$ such that $ b = x+ \sum_{i\in [s]} u_i$
for some $s\in [2t+t']$.

If $b< (\frac{1}{2}-\eps)Q$, then $b\in B_{i_*}$ implies that $b\in B_{\rm down}$.
Using \eqref{eq: admissible} yields 
$$b<(\frac{1}{2}-\eps)Q< \min(P_{i_*+1})  +\frac{1}{2}Q - \min(P_{i_*+1}) - 5 \eta Q  \leq  x+ \sum_{i\in [t]} r_i.$$
Hence, $s$ must be at most $t$ in this case and
this yields  
$$x+ \sum_{i\in [s]} u_i = b \in B_{\rm down} \subseteq B,$$
which contradicts the assumption of the claim.

If $b> (\frac{1}{2}+\eps)Q$, then \eqref{eq: w size} and \eqref{eq: admissible} with $c=t'$ ensures that 
$$b\geq (\frac{1}{2}+\eps)Q> \min(P_{i_*+1}) + k+ \frac{1}{2}Q + 5 \eta Q -\min(P_{i_*+1}) \geq  x+ \sum_{i\in [t]} r_i + w'_{t'}.$$
Hence we have $s\geq t+t'$.
As $b\in B_{i_*}$ and $b>(\frac{1}{2}+\eps)Q$, we have $b-w'_{t'}\in B_{\rm up}$. As $s\geq t+t'$, we have
$$ x+ \sum_{i\in [s-t]} r_i=b-w'_{t'}   \in B_{\rm up} \subseteq B,$$ which  contradicts the assumption of the claim.

Finally, consider the case of $b= (\frac{1}{2} \pm \eps)Q$. 
If $s\leq t$, then we have $x+ \sum_{i\in [s]} r_i = b\in (B_{i_*}\cap U)
\subseteq B_{\rm mid}$ as $0\in W'$.
If $s\geq t+t'$, then we have 
$b-w'_{t'} \in B_{i_*}-w'_{t'} \subseteq B_{\rm mid}$ as $w'_{t'}\in W'$.
Thus $ x+ \sum_{i\in [s-t']} r_i= b-w'_{t'} \in B_{\rm mid}\subseteq B$, a contradiction to the assumption of the claim. 

If $t<s\leq  t+t'$, then 
$w'_{s-t} = \sum_{i\in [s-t]} u_{t+i} \in W'$, 
thus
$$x+ \sum_{i\in [t]} r_{i}  = b- w'_{s-t} \in B_{\rm mid}\subseteq B,$$
which contradicts the assumption of the claim. This proves the claim.
\end{proof}
This claim ensures that we only have to worry about whether the sequence $x+\mathbf{r}(x)$ hits $B$ or not.
For each $x\in P$, we consider the following reduced grid digraph $G_x$ isomorphic to $G(t;\eta^2 t)$ with the vertex set $V_x$ where
\begin{align*}
V_x &:= \{ (x,s_1,s_2)\in P\times [t]^2 :  |s_1-s_2|\leq \eta^2 t\} \text{ and } V=\bigcup_{x\in P'}V_x.
\end{align*}
Consider a map $\alpha: V\rightarrow [Q]$ such that $\alpha(x,s_1,s_2) = x+ s_1 m_1(k)+ s_2m_2(k)$. 
This map $\alpha$ sends each vertex to a number of the form $x+ s_1m_1(k)+ s_2m_2(k)$, thus
a directed path in the grid graph $G_x$ naturally corresponds to a sequence of the form $x+ \mathbf{r}(x)$. We claim that $\alpha$ is an injective function.
\begin{claim}
    $\alpha$ is an injective function.
\end{claim}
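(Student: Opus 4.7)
The plan is to prove injectivity by contradiction. Suppose $\alpha(x, s_1, s_2) = \alpha(x', s_1', s_2')$ for two vertices of $V$, and set $c_i := s_i' - s_i$, so that
$$c_1 m_1(k) + c_2 m_2(k) = x - x'.$$
The goal is to force $(c_1, c_2) = (0,0)$, which then also gives $x = x'$, by invoking Proposition~\ref{prop: t terms independent} with its parameter set to $2 \in [d-1]$ (available since $d \geq 3$). Two ingredients will be needed: a small upper bound on $|x - x'|$, and a size bound $|c_1|, |c_2| \leq k/q$ so that the hypothesis of the proposition applies.

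The first ingredient is immediate from $x, x' \in P \subseteq P_{i_*+1}$ together with \ref{PN1}, giving $|x - x'| < |P_{i_*+1}| < k/2$. For the second, I will exploit the admissibility constraint built into $V_x$, namely $|s_1 - s_2|, |s_1' - s_2'| \leq \eta^2 t$, which yields $|c_1 - c_2| \leq 2\eta^2 t$. The key rewriting is
$$c_1 m_1(k) + c_2 m_2(k) = c_2 \bigl(m_1(k) + m_2(k)\bigr) + (c_1 - c_2) m_1(k),$$
and combined with $m_1(k), m_1(k)+m_2(k) = \Theta(k^{d-1})$ from \eqref{eq: mk property} this yields
$$|c_2| \leq \frac{|x - x'| + |c_1 - c_2| \cdot m_1(k)}{m_1(k) + m_2(k)} \leq O(k^{2-d}) + O(\eta^2 t) \leq \frac{k}{2q},$$
using $d \geq 3$, $t \leq k$ from \eqref{eq: s size}, and $\eta \ll 1/q$ from the hierarchy \eqref{eq: hierarchy}. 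Then $|c_1| \leq |c_2| + |c_1 - c_2| \leq k/q$. With both $|c_i| \leq k/q$, Proposition~\ref{prop: t terms independent} gives that if $(c_1, c_2) \neq (0,0)$, then $|c_1 m_1(k) + c_2 m_2(k)| > k^{d-2}/2 \geq k/2$, contradicting the first bound. Hence $(c_1, c_2) = (0,0)$, so $s_1 = s_1'$, $s_2 = s_2'$, and $x = x'$.

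The main obstacle is establishing $|c_i| \leq k/q$. A priori one only has $|c_i| \leq t$, which by \eqref{eq: s size} can reach $k$, far exceeding $k/q$. The admissibility condition is what rescues the argument: it pins $c_1$ and $c_2$ close to each other, so that in the displayed decomposition the dominant term $c_2(m_1(k)+m_2(k))$ can only be small if $c_2$ itself is small; the other direction, bounding $|c_1|$ given $|c_2|$, is then immediate from admissibility.
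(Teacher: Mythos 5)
Your proof is correct and follows essentially the same route as the paper's: both bound $|x-x'|$ by $k/2$ via \ref{PN1}, both use the admissibility constraint $|s_1-s_2|, |s_1'-s_2'|\leq\eta^2 t$ to pin the coefficients $c_1,c_2$ to size $O(\eta^2 t)\leq k/q$, and both conclude by Proposition~\ref{prop: t terms independent} with $t=2$, which forces $|c_1 m_1(k)+c_2 m_2(k)|>\frac{1}{2}k^{d-2}\geq k/2$ unless $(c_1,c_2)=(0,0)$. The only difference is presentational: you isolate $c_2$ via the rewriting $c_1 m_1+c_2 m_2=c_2(m_1+m_2)+(c_1-c_2)m_1$, whereas the paper bounds $|s_1-s_1'|$ directly by sandwiching $b-x$ and $b-x'$ between multiples of $m_1+m_2$; the two computations are interchangeable and rely on the same hierarchy.
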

\begin{proof}
Suppose not, then there exist $(x,s_1,s_2)\neq (x',s'_1,s'_2)\in V$ and $b\in [Q]$ with 
$$ b= x+s_1 m_1(k) + s_2 m_2(k) = x' + s'_1 m_1(k) + s'_2 m_2(k).$$
Assume $x\geq x'$.
By the definition of $V$, we have $s_2 = s_1 \pm \eta^2 t$, so
$$ (s_1-\eta^2 t)(m_1(k)+ m_2(k))\leq b-x \leq b-x' \leq (s'_1+\eta^2 t) (m_1(k)+ m_2(k)).$$
This implies that $|s_1-s'_1|\leq 2\eta^2 t$ and similarly we obtain $|s_2-s'_2|\leq 2\eta^2 t$.
As
$$x-x' = (s'_1-s_1) m_1(k) + (s'_2-s_2)m_2(k),$$
Proposition~\ref{prop: t terms independent} with $d=3$ and $t=2$ implies $s'_1-s_1= s'_2-s_2 =0$ or $|x-x'|>k/2$. As both $x$ and $x'$ are in $P$, \ref{PN1} implies $|x-x'|< k/2$, thus we must have $(s_1,s_2)=(s'_1,s'_2)$.
However, in such a case, we obtain $x-x'=0$, thus $(x,s_1,s_2)=(x',s'_1,s'_2)$. This proves that $\alpha$ is an injective function.
\end{proof}
As $\alpha$ is an injective function, the sets $\alpha(V_x)$ are pairwise disjoint over the choices of $x\in P$.
Note that $\alpha(V_x)$ is the set of all numbers in $[Q]$ which may belong to the sequence $x+\mathbf{r}(x)$ for an admissible sequence $\mathbf{r}(x)$. 

For each $x\in P$, let $L_x(a,b)$ be the set of vertices at level $j$ for some $a\leq j\leq b$ in the digraph $G_x$. 
Now, based on our choice of $B$, we can prove the following bounds on how it is distributed over the level sets in the graphs $G_x$.

\begin{claim}\label{cl: B distribution}
We have $|B|\leq  \ell^2 \eta^3 k^2$. Moreover, for each $i\in [\eta^2 t/\ell]$, 
    $\bigcup_{x\in P} \alpha(L_x(0,i\ell))$ and $\bigcup_{x\in P} \alpha(L_x(2t-i\ell,2t))$
    contains at most $\frac{2(i-1)\eta^2 k}{\log k}$ numbers in $B$.
\end{claim}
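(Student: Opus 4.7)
Part (a) is a direct size estimate. The sets $B_{\rm down}$ and $B_{\rm up}$ are a subset and a translate of a subset of $B_{i_*}$ respectively, while $B_{\rm mid}$ is a union of at most $|W'|\leq t'+1$ translates of a subset of $B_{i_*}$. Thus $|B|\leq(t'+3)|B_{i_*}|$, and plugging in $|B_{i_*}|\leq\eta^3 k^2/\log k$ (from \ref{Ind1}$_{i_*}$ with $i_*\leq\eta k$) and $t'\leq 3d\ell\log k$ (from \eqref{eq: f' size}) yields $|B|\leq\ell^2\eta^3 k^2$ under \eqref{eq: hierarchy}.

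For part (b), I treat the left union $\bigcup_{x\in P}\alpha(L_x(0,i\ell))$; the right union is handled by the analogous argument near $N_{i_*+1}$ with $B_{\rm up}$ replacing $B_{\rm down}$. Since $s_1+s_2\leq i\ell$ and $m_1(k),m_2(k)\leq 3m_1(k)$, the left union lies inside $E(i):=[\min(P_{i_*+1}),\max(P_{i_*+1})+3i\ell m_1(k)]$. Using \eqref{eq: min+T} together with $T\geq Q/(2\ell)$ and $|P_{i_*+1}|<k/2$ gives $\max(P_{i_*+1})\leq Q/2-Q/(5\ell)$, while $3i\ell m_1(k)\leq 3\eta^2 km_1(k)\leq 3\eta^2\ell Q$ (using $i\leq\eta^2 t/\ell$, $t\leq k$ from \eqref{eq: s size}, and $km_1(k)\leq p(k)\leq\ell Q$). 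Under the hierarchy $\eta\ll\eps\ll 1/\ell$ this forces $\max E(i)<(1/2-2\eps)Q$, so $E(i)$ is disjoint from both $B_{\rm mid}\subseteq[(1/2-2\eps)Q,(1/2+\eps)Q]$ and $B_{\rm up}\subseteq(Q/2,\infty)$, giving $B\cap E(i)=B_{\rm down}\cap E(i)\subseteq B_{i_*}\cap E(i)$.

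Next, every bad pair in $\mathcal{B}_{i_*}$ was added at some step $j\leq i_*$ with both elements in $[\min(P_j),\max(N_j)]$, so by nesting $B_{i_*}\subseteq[\min(P_{i_*}),\max(N_{i_*})]$ and thus $B_{i_*}\cap E(i)\subseteq[\min(P_{i_*}),\max(P_{i_*+1})+3i\ell m_1(k)]$. Set $J:=\lceil i/3\rceil$. Iterating \ref{PN2} yields $\max(P_{i_*-J})\geq\max(P_{i_*+1})+(J+1)\cdot 10\ell m_1(k)$, and combined with $|P_{i_*-J}|<k/2$ from \ref{PN1} and $k/2\ll\ell m_1(k)$ (since $d\geq 3$) this gives $\min(P_{i_*-J})\geq\max(P_{i_*+1})+9(J+1)\ell m_1(k)>\max(P_{i_*+1})+3i\ell m_1(k)+1$. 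Therefore $[\min(P_{i_*}),\max(P_{i_*+1})+3i\ell m_1(k)]\subseteq[\min(P_{i_*}),\min(P_{i_*-J})-1]=\bigcup_{a=i_*-J}^{i_*-1}S_a$, and applying \ref{Ind2}$_{i_*}$ with $j=J+1$ gives $|B_{i_*}\cap E(i)|\leq J\cdot\eta^2 k/\log k\leq 2(i-1)\eta^2 k/\log k$ for $i\geq 2$; the case $i=1$ is immediate because the same estimate yields $\max E(1)<\min(P_{i_*})$, so $B_{i_*}\cap E(1)=\varnothing$.

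The one delicate calibration is the choice $J=\lceil i/3\rceil$: it must be at least $\sim i/3$ so the telescoping union $\bigcup_{a=i_*-J}^{i_*-1}S_a$ covers $E(i)\cap[\min(P_{i_*}),\infty)$, yet at most $\sim 2(i-1)$ so that \ref{Ind2}$_{i_*}$ produces the required bound $2(i-1)\eta^2 k/\log k$. The linear lower bound $10\ell m_1(k)$ per step supplied by \ref{PN2} is exactly what accommodates this trade-off, and the hypothesis $i\leq\eta^2 t/\ell$ ensures $E(i)$ remains much shorter than the "middle gap" $Q/(5\ell)$, which is what lets us rule out $B_{\rm mid}$ and $B_{\rm up}$ in the first place.
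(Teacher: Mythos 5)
Your proof is correct and follows essentially the same strategy as the paper: for (a), bound $|B|$ as $O(|W'|)$ translates of $B_{i_*}$ and invoke \ref{Ind1}$_{i_*}$ and \eqref{eq: f' size}; for (b), use \ref{PN2} to locate $\bigcup_x\alpha(L_x(0,i\ell))$ inside a union of $O(i)$ consecutive blocks $S_j$, check via \ref{PN4} and \eqref{eq: min+T} that this region avoids $B_{\rm mid}$ and $B_{\rm up}$, and apply \ref{Ind2}$_{i_*}$. Your version is somewhat more explicit than the paper's: you introduce the window $E(i)$, state and justify the strengthening $B_{i_*}\subseteq[\min(P_{i_*}),\max(N_{i_*})]$ (which the paper also uses but leaves implicit, since \ref{Ind1}$_{i_*}$ as literally stated only gives containment in $[\min(P_{i_*+1}),\max(N_{i_*+1})]$), and you take $J=\lceil i/3\rceil$ so that \ref{Ind2}$_{i_*}$ is applied with a smaller index than the paper's choice; both choices comfortably yield the stated bound $2(i-1)\eta^2 k/\log k$. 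The right union, as you note, requires the extra shift by $w_{t'}'$ to line $B_{\rm up}$ up with the levels near $2t$, but this is the same shift already built into the definition of $B_{\rm up}$, so the argument is genuinely symmetric.
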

\begin{proof}
By \ref{Ind1}$_{i_*}$ and the definition of $B$, \eqref{eq: f' size}  implies
$$|B|\leq (|W'|+1)|B_{i_*}|\leq (1+3d\ell \log k ) \frac{\eta^3 k}{2\log k} \leq \ell^2 \eta^3 k^2.$$

For all $i\in [\eta^2 t/\ell]$, $x\in P$ and $v\in L_x(0,i\ell)$, we have 
$$\alpha(v) \leq x+ 2 i\ell  m_2(k) \leq \max(P_{i_*+1}) + 5 i\ell  m_1(k),$$
hence $i_*\geq \eta k/2>\eta^2 k$ and \ref{PN2} ensure that
$\alpha(v)$ belongs to $\bigcup_{j=i_*-i+1}^{i_*-1}S_{j}$ and all such numbers are  smaller than $(\frac{1}{2}-\eps)Q$ by \ref{PN4}.
Thus we conclude that 
$$\bigcup_{x\in P}\alpha( L_x(0,i\ell))\subseteq \bigcup_{j=i_*-i+1}^{i_*-1} S_j
.$$
As $B\cap \bigcup_{j=i_*-i+1}^{i_*-1} S_j$ belongs to $B_{\rm down}\subseteq B_{i_*}$ and $\alpha$ is injective, \ref{Ind2}$_{i_*}$ implies that the set
$\alpha(\bigcup_{x\in P} L_x(0,i\ell))$ contains at most $ \frac{(i-1)\eta^2 k}{\log k}$ numbers in $B$.

Similarly, we can show that for all $i\in [\eta^2 t/\ell]$, $x\in P$ and $v\in L_x(2t-i\ell,2t)$, we have 
$$\alpha(v) \geq y(x) - 2 \ell i m_2(k) \geq \min(N_{i_*+1}) - 5\ell i m_1(k).$$
By the definition of $\mathbf{u}(x)$ and the fact that $y(x)\in N_{i_*+1}$, \ref{PN2} implies that 
$\bigcup_{x\in P} \alpha(L_x(2t-i\ell,2t))$ belong to $\left(\bigcup_{j=i_*-i+1}^{i_*-1} T_j\right) - w'_{t'}$ and all numbers in $\left(\bigcup_{j=i_*-i+1}^{i_*-1} T_j\right)$ are larger than $(\frac{1}{2}+\eps)Q$. So, $\alpha(v)$ is never in $B_{\rm down}$ or $B_{\rm mid}$.
Applying \ref{Ind2}$_{i_*}$ again yields that
$\bigcup_{x\in P} \alpha(L_x(2t-i\ell,2t))$ contains at most $ \frac{(i-1)\eta^2 k}{\log k}$ numbers of $B_{\rm up}$. This proves the lemma. 
\end{proof}

Now, we consider the following pairwise disjoint sets over the choices of $x\in P$.
$$B^x = B\cap \alpha(V_x).$$
Let $P'$ be the set of all $x\in P$ satisfying the following two properties.
\begin{enumerate}[label={\rm (E\arabic*)}]
\item \label{E1} $|B^x|\leq \frac{\eta^2 t}{10}$.
    \item \label{E2} For each $i\in [\eta^2 t/\ell]$, each set $\alpha(L_x(0,i\ell))$ and $\alpha(L_x(2t-i\ell,2t))$ contains at most $(i-1)$ numbers in $B^x$.
\end{enumerate}
We claim the following.
\begin{claim}\label{cl: P' size}
    $|P'|\geq \eps k/6$.
\end{claim}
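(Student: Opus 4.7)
The plan is to bound separately the number of $x \in P$ that violate \ref{E1} and those that violate \ref{E2}, exploiting the crucial fact that the sets $B^x = B \cap \alpha(V_x)$ are pairwise disjoint over $x \in P$ (since $\alpha$ is injective and the $V_x$ are pairwise disjoint). In particular, $\sum_{x \in P}|B^x| \leq |B|$, and more refinedly, for each $j$,
$$\sum_{x \in P}|\alpha(L_x(0,j\ell)) \cap B^x| = \Big|\bigcup_{x \in P}\alpha(L_x(0,j\ell)) \cap B\Big| \leq \frac{2(j-1)\eta^2 k}{\log k}$$
by the second part of Claim~\ref{cl: B distribution}, and symmetrically at the $2t$ end.

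For \ref{E1}, a Markov-type averaging gives that the number of $x$ with $|B^x| > \eta^2 t/10$ is at most $10|B|/(\eta^2 t)$. Using $|B| \leq \ell^2 \eta^3 k^2$ from Claim~\ref{cl: B distribution} and the lower bound $t \geq k/\ell^3$ from \eqref{eq: s size}, this is at most $10\ell^5 \eta k$, which is negligible compared to $\eps k$ by the hierarchy \eqref{eq: hierarchy}. The more interesting step is \ref{E2}. For each $x \in P$, I would define $i(x)$ to be the smallest $i \in [\eta^2 t/\ell]$ with $|\alpha(L_x(0,i\ell)) \cap B^x| \geq i$ (and $\infty$ if no such $i$ exists), and let $y_i := |\{x \in P : i(x) = i\}|$. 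By monotonicity, for any $j$ every $x$ with $i(x) \leq j$ contributes at least $i(x)$ to $\sum_x |\alpha(L_x(0,j\ell)) \cap B^x|$, so combining with the displayed bound yields
$$\sum_{i \in [j]} i \cdot y_i \leq \frac{2(j-1)\eta^2 k}{\log k} \quad \text{for every } j \in [\eta^2 t/\ell].$$
This is exactly the hypothesis of Lemma~\ref{lem: inequality} with $m = 2\eta^2 k/\log k$ and $n = \eta^2 t/\ell \leq \eta^2 k/\ell$, and it yields $\sum_i y_i \leq 2m(1 + \log n) = O(\eta^2 k)$. A symmetric argument handles the failures at the upper end via $L_x(2t-i\ell,2t)$.

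Adding the three contributions, the number of $x \in P$ that violate \ref{E1} or \ref{E2} is at most $10\ell^5 \eta k + O(\eta^2 k) \leq \eps k/30$ by the hierarchy, and since $|P| \geq \eps k/5$ we conclude $|P'| \geq \eps k/6$. The key conceptual move is the application of Lemma~\ref{lem: inequality}; the aggregate bound of Claim~\ref{cl: B distribution} only controls the $i$-th level sum by a factor linear in $i-1$, and a naive union bound over $i$ would lose a factor of $\eta^2 t/\ell$. Lemma~\ref{lem: inequality} converts such a weighted accumulated bound into a clean count of all failing $x$ with only a logarithmic loss, which is precisely what is needed to keep the failures below $\eps k$.
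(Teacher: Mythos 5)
Your proof is correct and follows essentially the same route as the paper's: both bound the failures of \ref{E1} by a Markov-type averaging over the disjoint sets $B^x$ (using $|B|\leq \ell^2\eta^3 k^2$ and $t\geq k/\ell^3$), and both bound the failures of \ref{E2} by grouping $x$ according to the first failing index $i(x)$, noting that the accumulated counts from Claim~\ref{cl: B distribution} produce exactly the weighted prefix-sum hypothesis $\sum_{j\leq i} j\,y_j \lesssim (i-1)\eta^2 k/\log k$ required by Lemma~\ref{lem: inequality}, which then absorbs the harmonic accumulation up to a $\log$ factor. The paper implements the same idea via the disjoint sets $P^{(0)},P^{(1)},\dots$ rather than via the function $i(x)$, but that is only a notational difference; your explicit separation into the lower end $L_x(0,i\ell)$ and the symmetric upper end $L_x(2t-i\ell,2t)$ is, if anything, slightly cleaner than the paper's somewhat compressed treatment.
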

\begin{proof}
Let $P^{(0)}$ be the set of $x\in P$ which does not satisfy \ref{E1}.
For each $i\in [\eta^2 t/\ell]$, we let 
$P^{(i)}$ be the set of all $x\in P\setminus \bigcup_{j\in [0,i-1]} P^{(j)}$ that does not satisfy \ref{E2} for the index $i$.

First, by Claim~\ref{cl: B distribution}, we have $|B|\leq \ell^2 \eta^3 k^2$.
By the definition of $P^{(0)}$, the set
$\bigcup_{x\in P^{(0)}} \alpha(V_X)$ contains at least $ \frac{\eta^2t}{10}|P^{(0)}|$ numbers in $B$. Hence we have
$$ \frac{\eta^2t}{10}|P^{(0)}| \leq \ell^2 \eta^3 k^2,$$
implying $|P^{(0)}|\leq \eps^2 k$ from \eqref{eq: s size}.

Now, consider $i\in [\eta^2 t/\ell]$.
For each $x\in \bigcup_{j\in [i]}P^{(j)}$ and $j\in [i]$ the definition of $P^{(j)}$ ensures that $$\bigcup_{x\in P^{(j)}} \alpha(L_x(0,i\ell)) \supseteq \bigcup_{x\in P^{(j)}} \alpha(L_x(0,j\ell))$$
contains at least $j|P^{(j)}|$ numbers in $B$.
Because $P^{(1)},\dots, P^{(i)}$ are pairwise disjoint by definition and $\alpha$ is injective,  $\sum_{j\in [i]}j|P^{(j)}|$ is at most the number of elements of $B$ inside
$\bigcup_{j\in [i]} \bigcup_{x\in P^{(j)}} \alpha(L_x(0,i\ell))$, which is at most 
$\frac{2(i-1)\eta^2 k}{\log k}$ by Claim~\ref{cl: B distribution}.
Hence we obtain the following for all $i\in [\eta^2 t/\ell]$,
$$\sum_{j\in [i]}j|P^{(j)}| \leq \frac{2(i-1)\eta^2 k}{\log k}.$$
By Lemma~\ref{lem: inequality}, this implies
$$ \sum_{i\in [\eta^2 t/\ell]}|P^{(j)}| \leq \frac{4 \eta^2 k}{\log k} \cdot \log(\eta^2 t/\ell) \leq 4\eta^2 k.$$
Therefore, the set
$$P'= P \setminus \bigcup_{i\in[0,\eta^2t]} P^{(i)}$$
has size at least $\eps k/5 - \eps^2 k - 2\eta^2 k \geq \eps k/6$.
\end{proof}

For each $x\in P'$, consider the reduced grid digraph $G_x$ with the set $Z_x:= \alpha^{-1}(B^x)$.
By \ref{E1}--\ref{E2}, for $x\in P'$,
the graph $G_x = G(t;\eta^2 t)$ and the set $Z_x$ satisfy the assumption of  Lemma~\ref{lem: grid} with $\eta^2,t$ playing the roles of $\eta^2, n$, respectively.
Thus there exists a path $P_x$ in $G_x - Z_x$ from $(x,0,0)$ to $(x,t,t)$.
As each vertex $(x,s_1,s_2)$ in $G_x$ corresponds to a number $\alpha(x,s_1,s_2)=x+ s_1m_1(k) + s_2m_2(k)$,
such a path $P_x$ yields
  a sequence $\alpha(P_x)$ of numbers starting from $x$ and ending at $x+ t(m_1(k)+m_2(k))$.
For such a path $P_x = (p_1,\dots, p_{2t})$,
let $\mathbf{r}(x)$ be the sequence 
$$\mathbf{r}(x)= (\alpha(p_1)-x, \alpha(p_2)-\alpha(p_1),\dots, \alpha(p_{2t})-\alpha(p_{2t-1})).$$
Then this sequence naturally yields a new sequence
$$\mathbf{u}(x) = (r_1,\dots, r_{t},w_1,\dots, w_{t'}, r_{t+1},\dots, r_{2t}).$$
As $P_x$ avoids the vertices in $Z_x = \alpha^{-1}(B^x)$, the set of numbers in the sequence
$$x+\mathbf{r}(x) =( x, x+r_1,\dots, x+ \sum_{i\in [2t]} r_{i})$$
does not intersect $B$. 
By Claim~\ref{cl: r useful}, this implies that 
the sequence 
$$x+\mathbf{u}(x) = (x, x+ u_1,\dots, x+ \sum_{i\in [2t+t']} u_i) $$
from $x$ to $y(x)$ does not intersect $B_{i_*}$.
As this sequence starts with $x\in \Phi^{+}$ and ends at $y(x)\in \Phi^{-}$ and the differences between two consecutive terms are all $m_s(k)$ for some $s\in I\cup \{1,2\}\subseteq [k_0-k]$, it contains two consecutive terms which form a bad pair. As this new bad pair belongs to the sequence $x+ \mathbf{u}(x)$, it is disjoint from $B_{i_*}$.
Same logic applies for all $x\in P'$, and we obtain $\eps k/6$ new bad pairs.
As $\alpha$ is injective and each bad pair from the sequence $\mathbf{u}(x)$ belongs to $\alpha(V_x)$, those bad pairs obtained from different choices of $x\in P'$ are pairwise disjoint.

Among them we choose exactly $\eta^2 k /\log{k} \leq \eps k/6 \leq |P'|$ pairwise disjoint bad pairs and add them  to $\mathcal{B}_{i_{*}}$ to obtain $\mathcal{B}_{i_{*}+1}$.
Then, by construction, $\mathcal{B}_{i_{*}+1}$ satisfies \ref{Ind1}$_{i_{*}+1}$.
Moreover, \ref{Ind2}$_{i_{*}}$ together with this construction of new bad pairs, 
$$| \bigcup_{i_{*}-j+1}^{i=i_{*}} (S_i\cup T_i) \cap B_{i_{*}+1}| \leq 
| \bigcup_{i_{*}-j+1}^{i=i_{*}-1} (S_i\cup T_i) \cap B_{i_{*}}| + \frac{\eta^2 k}{\log{k}}.$$
This implies \ref{Ind2}$_{i_*+1}$. Hence, we can keep repeating this until $i_* =\eta k/2$, then we obtain at least $\frac{\eta^3 k^{2}}{2\log{k}}$ pairwise disjoint bad pairs, and Lemma~\ref{lem: bad} implies that there are $ \frac{\eta^3 k^{2}}{2\log{k}}$ distinct monochromatic solutions. This proves Lemma~\ref{lem: many intervals}.

\section{ Proof of Theorem~\ref{thm: existence}}

Again, for a given large number $k$, we write $k_{0}$ to denote the largest number with $p(k_{0})< 2 p(k) - 4m_1(k)$.
Let $m'(k)$ be the smallest integer satisfying $p(m'(k) )\geq 4m_1(k)$, then we have $m'(k)= \Theta(k^{(d-1)/d})$ for sufficiently large $k$.
We first collect the following lemma.

\begin{lemma}\label{lem: initial}
Let $p(z)\in \mathbb{Z}[z]$ be a non-odd polynomial of degree $d\geq 2$ with a positive leading coefficient.
Let $k$ be a sufficiently large number.
A $2$-coloring $\phi: [n,p(k)-n] \rightarrow \{-1,1\}$ is given where $k$ is a positive switch in this coloring.
Assume that it has no monochromatic solutions.
Let $n'=\max\{ n, m'(k)\}$ and let 
$$ I_{-1}= [k+1,k_{0}] \text{ and } I_1= [n', k].$$
Then for each $i\in \{-1,1\}$, all numbers in the interval $I_i$ has color $i$. 
\end{lemma}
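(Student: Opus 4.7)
The plan is to exploit the structural rigidity imposed by the absence of monochromatic solutions at the anchor points $z=k$ (colored $+1$) and $z=k+1$ (colored $-1$), and then propagate this information to all of $I_{-1}$ and $I_1$.

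First I would observe that since the triple $(a,p(k)-a,k)$ cannot be a $+1$-monochromatic solution, for every $a$ with $a,p(k)-a\in[n,p(k)-n]$ we must have $\phi(a)=-\phi(p(k)-a)$; that is, $\phi$ is anti-symmetric around $p(k)/2$. Similarly $\phi(k+1)=-1$ gives $\phi(b)=-\phi(p(k+1)-b)$ for $b\in[m_1(k)+n,p(k)-n]$. Chaining these two relations on the overlap of their domains yields the periodicity $\phi(a)=\phi(a+m_1(k))$ for $a\in[n,p(k)-n-m_1(k)]$, so $\phi$ is fully determined by its values on a single $m_1(k)$-window throughout the coloring interval.

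Next, to establish $I_{-1}\subseteq\Phi^-$, suppose for contradiction that some $j=k+s$ with $s\in[2,k_0-k]$ satisfies $\phi(j)=+1$, and take $s$ minimal. The triple $(k,p(j)-k,j)$ forces $\phi(p(j)-k)=-1$; the bound $j\leq k_0$, equivalently $p(j)<2p(k)-4m_1(k)$, is exactly what ensures $p(j)-k\in[n,p(k)-n]$. Now $\phi(j)=+1$ is a third anchor, producing a new anti-symmetry $\phi(a)=-\phi(p(j)-a)$ on the appropriate range; chaining this with the anti-symmetry around $p(k)/2$ gives a further shift-invariance of $\phi$ by $m_s(k)$, hence a period of $\gcd(m_1(k),m_s(k))$ on the common range. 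By the minimality of $s$, each $k+t$ with $t\in[1,s-1]$ is a $-1$-anchor contributing shift-invariance $m_t(k)-m_1(k)$; combined, these drive the period of $\phi$ small enough that $k$ and $k+1$ are forced into the same residue class, contradicting $\phi(k)\neq\phi(k+1)$.

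To establish $I_1\subseteq\Phi^+$, suppose $\phi(j)=-1$ for some $j\in[n',k-1]$. Since $j\geq m'(k)$ we have $p(j)\geq 4m_1(k)$, and in particular $p(j)-k-1\in[n,p(k)-n]$. The triple $(k+1,p(j)-k-1,j)$ is then forbidden from being $-1$-monochromatic, forcing $\phi(p(j)-k-1)=+1$. Applying the $z=k$ anti-symmetry gives $\phi(p(k)-p(j)+k+1)=-1$, still in range because $p(j)\geq 4m_1(k)$. Using $z=j$ as a new $-1$-anchor yields an anti-symmetry around $p(j)/2$; chaining this with the anti-symmetries from $z=k$ and $z=k+1$ and the $m_1(k)$-periodicity above produces a concrete $-1$-monochromatic triple in the interval, the desired contradiction.

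The main obstacle is closing the gcd-reduction argument in the $I_{-1}$ step uniformly over all non-odd $p$: a single $s$ can yield $\gcd(m_1(k),m_s(k))>1$, so one must cleverly combine shift-invariances from all the anchors $k+1,\ldots,k+s-1$ to force the final contradiction. The definitions $k_0=\max\{t:p(t)<2p(k)-4m_1(k)\}$ and $n'=\max\{n,m'(k)\}$ are what keep every derived quantity safely inside the coloring domain $[n,p(k)-n]$ throughout the propagation.
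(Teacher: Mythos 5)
Your central structural claim---that the triple constraints yield anti-symmetries $\phi(a)=-\phi(p(k)-a)$ and $\phi(b)=-\phi(p(k+1)-b)$, and hence the \emph{periodicity} $\phi(a)=\phi(a+m_1(k))$---is false, and this gap undermines the rest of the argument. From the absence of a $+1$-monochromatic solution $(a,p(k)-a,k)$ one only learns that \emph{not both} of $a$ and $p(k)-a$ are colored $+1$; both may perfectly well be $-1$. So $\phi(a)=+1\Rightarrow\phi(p(k)-a)=-1$ holds, but $\phi(a)=-1$ gives no information. Symmetrically, $\phi(k+1)=-1$ gives $\phi(b)=-1\Rightarrow\phi(p(k+1)-b)=+1$ and nothing in the other direction. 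Chaining the two one-sided implications gives only the \emph{monotonicity} $\phi(a)\leq\phi(a+m_1(k))$ (for $a\in[n,p(k)-n-m_1(k)]$), not the two-sided equality you assert; in a single residue class modulo $m_1(k)$ the coloring is an arbitrary step function ($-1,\dots,-1,+1,\dots,+1$), not a constant.

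This failure propagates through the rest of the proposal. The ``shift-invariances'' extracted from the anchors $k+2,\dots,k+s-1$, and the idea of driving the period down via a gcd reduction, are all built on the nonexistent two-sided symmetry. You yourself flag the gcd step as ``the main obstacle,'' but the problem is upstream: there is no period to reduce, only a monotone structure per residue class plus two one-sided sum constraints, and these do not force $\phi(k)=\phi(k+1)$ by any residue-class argument. The paper takes a genuinely different route. It isolates exactly the three correct facts (monotonicity along $m_1(k)$-steps; for $a+b=p(k)$ at least one of $a,b$ is $-1$; for $a+b=p(k)+m_1(k)$ at least one of $a,b$ is $+1$) and feeds them into Proposition~\ref{prop: sum}, which---using that $m_1(k)$ is odd, so sets of density $>1/2$ in $\mathbb{Z}_{m_1(k)}$ satisfy $A+A=\mathbb{Z}_{m_1(k)}$---shows that every number in $[p(k)+m_1(k)+1,\,2p(k)-4m_1(k)]$ is a sum of two $+1$-colored numbers and every number in a mirrored range is a sum of two $-1$-colored numbers. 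Applying this to $p(x)$ for $x\in I_{-1}$ and $x\in I_1$ then forces the claimed colors directly. The definitions of $k_0$ and $m'(k)$ are calibrated precisely so that $p(I_{-1})$ and $p(I_1)$ land in these ranges, which is a more delicate use of those quantities than the domain-containment role you assign them.
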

\begin{proof}
As $k$ is a positive switch, we have $\phi(k)=+1$ and $\phi(k+1)=-1$. 
Let $m_1:=m_1(k)$ and $m'= m'(k)$. 
As $p(z)$ is a non-odd polynomial, we know that $m_1$ is an odd number.
By the lack of monochromatic solutions, Lemma~\ref{lem: bad} implies that 
\begin{align}\label{eq: monotone s}
\text{for every $a\in [n,p(k)-n-m_1]$, 
$\phi(a) \leq \phi(a + m_1)$. }
\end{align}

We want to apply Proposition~\ref{prop: sum} with $m_1,n',Q$ playing the roles of $m,n,p(k)$, respectively.
For this, condition \ref{A1} is satisfied by \eqref{eq: monotone s} and as $\phi(k)=+1$ and $\phi(k+1)=-1$ and $p(k+1)=p(k)+ m_1$, the lack of monochromatic solutions implies \ref{A2} and \ref{A3}. 
As $k$ is large,  we have $m'< \frac{1}{10} m_1$, the application of Proposition~\ref{prop: sum} deduces that any number in $[p(k)+m_1+1, 2p(k)-4m_1]$ can be written as a sum of two numbers of color $+1$. As $p(k+1)= p(k)+m_1$ and $p(k_0)\leq 2p(k)-4m_1$, all numbers $p(x)$ with $x\in I_{-1}$ can be written as a sum of two numbers of color $1$.
Hence, the lack of monochromatic solutions implies that all numbers in $I_{-1}$ has color $-1$.


Similarly, Proposition~\ref{prop: sum} deduces that any number in $[4m', p(k)-1]$ can be written as a sum of two numbers of color $-1$. As $p(k-1)< p(k)$ and $p(n') \geq p(m'(k)) \geq 4m_1$, the number $p(y)$ can be written as a sum of two numbers with color $-1$ for all numbers $y$ in $I_{1}$.
Hence the lack of monochromatic solutions yields that all the numbers in $I_{1}$ are colored $+1$.
Therefore, all numbers in $I_{-1}$ are colored $-1$ and all numbers in $I_1$ are colored $+1$ and this proves the lemma.
\end{proof}

Now, for sufficiently large $n$, we choose numbers $q,\ell$ so that $$0<1/n\ll 1/\ell \ll 1/q \ll 1/\|p\|<1.$$
We consider a coloring $\phi: [n,  p(\lceil \frac{p(n)}{2} \rceil)] \rightarrow \{-1,1\}$ and assume that there are no monochromatic solutions.
If the interval $[n, \lceil \frac{p(n)}{2} \rceil]$ is monochromatic, then we obtain a monochromatic solution  $(\lfloor \frac{p(n)}{2} \rfloor , \lceil \frac{p(n)}{2} \rceil, n)$, so we are done.

Otherwise, we choose the largest switch $k\in [n, \lceil \frac{ p(n)}{2} \rceil]$.
Without loss of generality, assume that $k$ is a positive switch, i.e.  $\phi(k)=+1$ and $\phi(k+1)=-1$.
As $k\leq \lceil \frac{ p(n)}{2} \rceil$, we have $p(k) \leq p(\lceil \frac{p(n)}{2} \rceil)$ and we have $p(k_{0}) \leq 2p(\lceil \frac{p(n)}{2} \rceil)- 4m_1(\lceil \frac{p(n)}{2} \rceil)$.
Moreover, Lemma~\ref{lem: initial} implies that $k$ is an isolated switch with all numbers in $[k+1,k_{0}]$ colored $-1$ and all numbers in $[\max\{n, m'(k)\},k]$ are colored $+1$.

Thus, by Lemma~\ref{lem: bad} and the assumption that there is no monochromatic solution, the following holds.
\begin{align}\label{eq: no bad}
\text{If $b=a+m_s(k)$ for some $s\in [k_0-k]$, then $\phi(a)\leq \phi(b)$.}
\end{align}

{\noindent \bf Case 1.} $k\geq n+4\ell$.
In this case, we claim that $[k-4\ell, k]$ is monochromatic with color $+1$.
As $1/n\ll 1/\ell$ and $k\geq n$, we have $p(k-4\ell) \geq \ell^2 m_1(k)$ by \eqref{eq: mk property}.
Hence, $m'(k) \leq k-4\ell$ and  Lemma~\ref{lem: initial} implies that all numbers in $[k-4\ell, k]$ must have color $+1$.

Let $I:= \{t\cdot 2^j : t\in [q], 1\leq j \leq q+ \frac{1}{2} \log{k}\}$.
Let $s$ be the smallest number such that
$$T:= \frac{p(k)}{2}- k +2\ell - \mathbf{m}^k \cdot (q^q \mathbf{1}_I) - s m_1(k)$$
is smaller than $qk^{d-1}$. Such an integer exists as $2m_1(k)< qk^{d-1}$.

By Proposition~\ref{lem: estimation}, there exists a vector $\mathbf{v} + q^q \mathbf{1}_I \in [2q^q]^k$ such that $|\mathbf{v} \cdot \mathbf{m}^{k}| = T  \pm \ell$ and  $\mathbf{v} + q^q\mathbf{1}_I$ has nonzero $i$-th coordinate only when 
$i\in I$. In particular, $i\in I$ implies that $i \in [q^q k^{1/2}]\subseteq  [k_0-k]$.
Consider the vector $\mathbf{x}= \mathbf{v} + q^q \mathbf{1}_I + s\mathbf{1}_{\{1\}}$, and for each $i\in [k_0-k]$, choose $\mathbf{x}_i$ copies of the numbers $m_i(k)$ and list them in an arbitrary way.
Let $w_1,w_2,\dots, w_t$ be the obtained list of numbers, then each $w_i$ is $m_{i}(k)$ for some $i \in [k_{0}-k]$ and 
$$Q:=\sum_{i\in [t]} w_i = \frac{p(k)}{2} - k + 2\ell \pm \ell.$$
Hence, this with \eqref{eq: no bad} implies that all numbers in $[k-4\ell,k]+Q$ must have color $+1$.
On the other hand, by the choice of $Q$, at least $3\ell$ numbers in $[k-4\ell,k)+Q$ lies in the interval $[ \frac{p(k)}{2}- 2\ell, \frac{p(k)}{2} + 2\ell]$.
Hence, more than half of the numbers in the interval $[ \frac{p(k)}{2}- 2\ell, \frac{p(k)}{2} + 2\ell]$ has color $+1$, and this yields a monochromatic solution of the form $(\frac{p(k)}{2}-y, \frac{p(k)}{2}+y, k)$ with color $+1$. \newline

{\noindent \bf Case 2. } $n\leq k<n+4\ell$. In this case, maximality of $k$ implies that $[n+4\ell+1,  \lceil p(n)/2\rceil]$ is monochromatic with color $-1$. Then we have $k+4\ell+1< k + m_1(k) <   \lceil p(n)/2\rceil$, hence
$\phi(k)=1 > \phi(k+ m_1(k) )=-1$, thus this contradicts \eqref{eq: no bad}.  

\medskip

This proves Theorem~\ref{thm: existence}.

\end{document}